\newtheorem{thm}{Theorem}[section]
\newtheorem{prop}[thm]{Proposition}
\newtheorem{lem}[thm]{Lemma}
\newtheorem{cor}[thm]{Corollary}
\newtheorem*{main}{Main Theorem}
\theoremstyle{definition}
\newtheorem{defn}[thm]{Definition}
\newtheorem{rem}[thm]{Remark}
\newtheorem{conv}[thm]{Convention}
\newtheorem*{cor_dense}{Corollary 2.10}
\newtheorem*{cor_thick}{Corollary 2.11}
\newtheorem*{cor_geoddiv}{Corollary 2.12}
\newtheorem*{cor_mnotc}{Corollary 2.13}
\renewcommand{\emptyset}{\varnothing}
\newcommand{\field}[1]{\mathbb{#1}}
\renewcommand{\implies}{\Rightarrow}
\DeclareMathOperator{\diam}{diam}
\newcommand{\showcomments}{yes}
\newsavebox{\commentbox}
\begin{document}

\title[Divergence of finitely presented groups]{Divergence of finitely presented groups}

\begin{abstract}
We construct families of finitely presented groups exhibiting new divergence behavior; we obtain divergence functions of the form $r^\alpha$ for a dense set of exponents $\alpha \in [2,\infty)$ and $r^n\log(r)$ for integers $n \geq 2$. The same construction also yields examples of finitely presented groups which contain Morse elements that are not contracting. 
\end{abstract}

\author{Noel Brady}
\address{University of Oklahoma, Norman, OK 73019-3103, USA}
\email{nbrady@ou.edu}

\author{Hung Cong Tran}
\address{University of Oklahoma, Norman, OK 73019-3103, USA}
\email{Hung.C.Tran-1@ou.edu}

\maketitle

\section{Introduction}
In classical geometry, one can distinguish between spherical, euclidean, and hyperbolic spaces by measuring the rates at which geodesics diverge apart. 
In \cite{MR1254309}, Gersten defined \textit{divergence} as a quasi-isometry invariant of geodesic metric spaces and of finitely generated groups. Roughly speaking, divergence measures the distance distortion of the complement of an open ball of radius $r$. Gersten used divergence to classify certain $3$--manifold groups up to
quasi-isometry (see \cite{MR1302334}). The concept of divergence has also been studied by Macura \cite{MR1909513,MR3032700}, Behrstock \cite{MR2255505}, Duchin-Rafi \cite{MR2563768}, Ol'shanskii-Osin-Sapir \cite{OOS_Lacunary_hyperbolic_groups}, Dru{\c{t}}u-Mozes-Sapir \cite{MR2584607}, Behrstock-Charney \cite{MR2874959}, Behrstock-Dru{\c{t}}u \cite{MR3421592}, Sisto \cite{Sisto}, Levcovitz \cite{IL}, Gruber-Sisto \cite{MR3897973}, and others. 

It is known that the divergence of finitely presented groups can be polynomial of arbitrary degree or exponential (see \cite{MR3032700,MR1909513,MR3421592,Sisto,MR3314816}). There are examples of groups whose divergence function is not a polynomial or exponential function (see \cite{OOS_Lacunary_hyperbolic_groups,MR3897973}) but these groups are not finitely presented. 

As an application of our Main Theorem, we obtain families of finitely presented groups with diverse non-polynomial and non-exponential divergence functions.

\begin{cor_dense}
\textit{There exist finitely presented groups whose divergence is equivalent to $r^\alpha$ for a dense set of exponents $\alpha \in [2,\infty)$ and to $r^n\log(r)$ for integers $n\geq 2$.}
\end{cor_dense}
 
There are a number of geometric notions which are closely related to divergence. These include the concepts of (geometric) thickness, geodesic divergence, Morse elements, and contracting elements.  Our Main Theorem has consequences for all of these notions, which we describe briefly below. 

 In \cite{MR2501302} there is a definition of a metric space being {\em thick} of order $n$, and the corresponding definition of a group being {\em (geometrically) thick} of order $n$.  If a group is geometrically thick of order $n$, then its divergence is bounded above by $r^{n+1}$. The following corollary provides a negative answer to  Question~1.2 of \cite{MR3421592} which asked if every thick group of order $n$ has divergence equal to $r^{n+1}$.

\begin{cor_thick}
For each integer $n\geq 2$ there exists infinitely many finitely presented groups each of which is thick of order $n$ and whose divergence lies strictly between $r^n$ and $r^{n+1}$. 
\end{cor_thick}

One can focus on the divergence of a particular bi-infinite geodesic in a metric space. This leads to the notion of  \emph{geodesic divergence} (and the upper and lower geodesic divergence variants). Lower geodesic divergence was used in \cite{MR3339446,ACGH} to characterize the important concepts of Morse geodesics and Morse elements in groups. We note that when the geodesic is periodic, the upper and lower versions of geodesic divergence agree.

In \cite{MR3451473}, the second author constructed a collection of Morse geodesics whose divergence is equivalent to $r^s$ for arbitrary $s\in [2,\infty)$. However, the geodesics in \cite{MR3451473} are not periodic. As a second application of the Main Theorem, we construct a collection of periodic geodesics with diverse non-polynomial and non-exponential divergence functions. 

\begin{cor_geoddiv}
\textit{There exist finitely presented groups containing an infinite periodic geodesic whose divergence is equivalent to $r^\alpha$ for a dense set of exponents $\alpha \in [1,\infty)$ and to $r^n\log(r)$ for integers $n\geq 1$.}
\end{cor_geoddiv}


In \cite{ABD}, Abbott-Behrstock-Durham  defined the concepts of \emph{contracting quasi-geodesics} and \textit{contracting elements} in a group. Contracting elements are known to be Morse, and all previously known examples of Morse elements in finitely presented groups are contracting. Another application of our Main Theorem is the fact that Morse elements in finitely presented groups need not be contracting. 

\begin{cor_mnotc}
\textit{There exist finitely presented groups containing Morse elements which are not contracting.}
\end{cor_mnotc}


\subsection*{Acknowledgements} We thank the referee for their careful reading and very helpful comments. 

The first author was supported by Simons Foundation collaboration grant \#430097. 
The second author was supported by an AMS-Simons Travel Grant. 

\section{Definitions and statement of the main results}
\begin{conv}
\label{cv}
Let $\mathcal{M}$ be the collection of all functions from positive reals to positive reals. Let $f$ and $g$ be arbitrary elements of $\mathcal{M}$. We say that $f$ is \emph{dominated} by $g$, denoted $f \preceq g$, if there are positive constants $A, B, C$ such that $f(x) \leq g(Ax) + Bx$ for all $x > C$. We say that $f$ is \emph{equivalent} to $g$, denoted $f\sim g$, if $f\preceq q$ and $g\preceq f$.

Let $\{\delta_\rho\}$ and $\{\delta'_\rho\}$ be two families of functions of $\mathcal{M}$, indexed over $\rho \in (0,1]$. The family $\{\delta_\rho\}$ is \emph{dominated} by the family $\{\delta'_\rho\}$, denoted $\{\delta_\rho\}\preceq \{\delta'_\rho\}$, if there exists constant $L \in (0,1]$ such that $\delta_{L\rho}\preceq \delta'_\rho$ for all $\rho \in (0,1]$. We say $\{\delta_\rho\}$ is \emph{equivalent} to $\{\delta'_\rho\}$, denoted $\{\delta_\rho\}\sim \{\delta'_\rho\}$, if $\{\delta_\rho\}\preceq \{\delta'_\rho\}$ and $\{\delta'_\rho\}\preceq \{\delta_\rho\}$. If $f$ is a function in $\mathcal{M}$, the family $\{\delta_\rho\}$ is \emph{equivalent} to $f$ if there is $b\in (0,1]$ such that $\delta_\rho$ is equivalent to $f$ for each $\rho \in (0,b]$.

\end{conv}

We now recall Gersten's definition of divergence from \cite{MR1254309}. Let $X$ be a geodesic space and $x_0$ one point in $X$. Let $d_{r,x_0}$ be the induced length metric on the complement of the open ball with radius $r$ about $x_0$. If the point $x_0$ is clear from context, we will use the notation $d_r$ instead of $d_{r,x_0}$.

\begin{defn}[Group divergence]
Let $X$ be a geodesic space with a fixed point $x_0$. For each $\rho\in (0,1]$ we define a function $\delta_\rho\!: (0, \infty) \to (0, \infty)$ as follows. For each $r>0$, let $\delta_\rho(r) = \sup d_{\rho r}(x_1, x_2)$ where the supremum is taken over all $x_1$ and $x_2$ on the sphere $S(x_0,r)$ such that $d_{\rho r}(x_1,x_2)<\infty$. The family of functions $\{\delta_\rho\}$ is the \emph{divergence} of $X$.

Using Convention~\ref{cv} the divergence of $X$ does not depend on the choice of $x_0$ and it is a quasi-isometry invariant (see \cite{MR1254309}). The \emph{divergence} of a finitely generated group $G$, denoted ${\rm Div}_{G}$, is the divergence of the Cayley graph $\Gamma(G,S)$ for some (any) finite generating set $S$. 
\end{defn}

We now recall the definition of quasi-geodesic divergence and undistorted cyclic subgroup divergence.

\begin{defn}[Quasi-geodesic divergence]
Let $X$ be a geodesic space and $\alpha\!:(-\infty,\infty)\to X$ be an $(L,C)$--bi-infinite quasi-geodesic. The \emph{divergence} of $\alpha$ in $X$ is the function ${\rm Div}_\alpha\!:(0,\infty) \to (0,\infty)$ defined as follows. Let $r>0$ be an arbitrary number. If there is no path outside the open ball $B(\alpha(0),r/L-C)$ connecting $\alpha(-r)$ and $\alpha(r)$, we define ${\rm Div}_\alpha(r)=\infty$. Otherwise, we define ${\rm Div}_\alpha(r)$ to be the infimum over the lengths of all paths outside the open ball $B(\alpha(0),r/L-C)$ connecting $\alpha(-r)$ and $\alpha(r)$. 
\end{defn}

\begin{defn}[Cyclic subgroup divergence]
Let $G$ be a finitely generated group and $\langle c \rangle $ be an undistorted, infinite cyclic subgroup of $G$. Let $S$ be a finite generating set of $G$ that contains $c$. Since $\langle c\rangle$ is undistorted, every bi-infinite path with edges labeled by $c$ is a quasi-geodesic in the Cayley graph $\Gamma(G,S)$. 
The \emph{divergence} of the cyclic subgroup $\langle c \rangle$ in $G$, denoted ${\rm Div}_{\langle c \rangle}^{G}$, is defined to be the divergence of such a bi-infinite quasi-geodesic.
\end{defn}

Using Convention~\ref{cv} the divergence of the cyclic subgroup $\langle c \rangle$ in $G$ does not depend on the choice of finite generating set $S$. We leave the proof of this fact as an exercise for the reader. 

We now review the concept of Morse (contracting) quasi-geodesics and Morse (contracting) elements in a finitely generated group.

\begin{defn} [Morse quasi-geodesic]
Let $X$ be a geodesic space. A bi-infinite quasi-geodesic $\alpha$ in $X$ is \emph{Morse} in $X$ if for every $K \geq 1,C \geq 0$ there is some $M = M(K,C)$ such that every $(K,C)$--quasi–geodesic with endpoints on $\alpha$ is contained in the $M$--neighborhood of $\alpha$. 
\end{defn}

\begin{defn} [Contracting quasi-geodesic]
Let $X$ be a geodesic space. A bi-infinite quasi-geodesic $\alpha$ in $X$ is \emph{contracting} in $X$ if there exist a map $\pi_\alpha\!:X\to \alpha$ and constants $0< A< 1$ and $D\geq 1$ satisfying:
\begin{enumerate}
\item $\pi_\alpha$ is $(D,D)$--coarsely Lipschitz (i.e., $\forall x_1, x_2 \in X, d(\pi_\alpha(x_1),\pi_\alpha(x_2)) \leq Dd\bigl(x_1, x_2\bigr) + D$). 
\item For any $y\in \alpha$, $d\bigl(y,\pi_\alpha(y)\bigr)\leq D$.
\item For all $x\in X$, if we set $R=A d(x,\alpha)$, then $\diam\bigl(\pi_\alpha\bigl(B_R(x)\bigr)\bigr)\leq D$.
\end{enumerate}
\end{defn}

\begin{defn}[Morse element and Contracting element]
Let $G$ be a finitely generated group and $g$ an infinite order element in $G$. Let $S$ be a finite generating set of $G$ that contains $g$. Let $\alpha$ be a path in the Cayley graph $\Gamma(G,S)$ with edges labeled by $g$. The element $g$ in $G$ is \emph{Morse} (resp. \emph{contracting}) if the path $\alpha$ is a Morse (resp. contracting) quasi-geodesic.
\end{defn}

It is not hard to see that the notions of Morse quasi-geodesic and contracting quasi-geodesic are quasi-isometry invariants. In particular, the concepts of Morse elements and contracting elements in a finitely generated group do not depend on the choice of finite generating set $S$. 

We now define the concept of extrinsic distance function of an infinite cyclic subgroup. This concept is a key ingredient in the Main Theorem.

\begin{defn}[Extrinsic distance function]
Let $G$ be a finitely generated group with a finite generating set $S$ and let $g \in G$ be an element of infinite order. The \emph{extrinsic distance function} 
of the infinite cyclic subgroup $\langle g \rangle$
is defined as follows 
$$
\delta_{g,S}(n)\; =\; |g^n|_S.$$
We can view this as a function $(0,\infty) \to (0,\infty)$ by precomposing with the greatest integer function 
$\lfloor x\rfloor$. 
\end{defn}

\begin{defn}[Coarse Lipschitz equivalence of functions]
We say the function $\delta_{g,S}(n)$ is \emph{coarse Lipschitz equivalent} to a function $f\!:(0,\infty)\to (0,\infty)$ if there is a positive constant $K$ such that for each positive integer $n$ we have $$\frac{1}{K}f(n)-K\leq \delta_{g,S}(n)\leq K f(n)+K.$$

It is straightforward to verify that if $\delta_{g,S}(n)$ is coarse Lipschitz equivalent to a function $f\!:(0,\infty)\to (0,\infty)$, then $\delta_{g,T}(n)$ is also coarse Lipschitz equivalent to $f$ for another finite generating set $T$ of $G$.
\end{defn}

We now state the Main Theorem in this paper.

\begin{main}\label{main_thm}
Let $H$ be a finitely presented group with finite generating set $T$ containing a proper infinite cyclic subgroup $\langle c\rangle$ whose extrinsic distance function 
$$
n \mapsto |c^n|_T
$$
is coarse Lipschitz equivalent to a non-decreasing function $f$.

For each integer $m \geq 1$ there are finitely presented one-ended groups
$$
G_1 \; =\; (H\ast_{\langle c=a_0a_1^{-1}\rangle}{\mathbb Z}^2) \times {\mathbb Z}
$$
where ${\mathbb Z}^2 = \langle a_0, a_1 | a_0a_1=a_1a_0\rangle$
and 
$$
G_m \; =\; \langle G_{m-1}, a_m | a_m^{-1}a_0a_m=a_{m-1}\rangle 
$$
for $m\geq 2$, with the following properties. The subgroup $\langle a_m \rangle$ is undistorted and the following table holds.
\begin{center}
\begin{tabular}{|p{0.3in}|p{1.4in}|p{1.4in}|}
\hline
\hfill \rule[-0.3cm]{0cm}{0.8cm} {\bf $m$} & \hspace{0.3in} {\bf ${\rm Div}_{\langle a_m\rangle}^{G_m}$ } & \hspace{0.3in} {\bf ${\rm Div}_{G_m}$}\\
\hline
\hline
\rule[-0.3cm]{0cm}{0.8cm} \hfill $2$ & \hspace{0.3in}$rf(r)$ \ & \hspace{0.3in}$r^2$\\
\hline
\rule[-0.3cm]{0cm}{0.8cm} \hfill $\geq 3$ & \hspace{0.3in}$r^{m-1}f(r)$ &\hspace{0.3in}$r^{m-1}f(r)$\\
\hline
\end{tabular}
\end{center}

\end{main}

The proof of the divergence of each group $G_m$ for $m\geq 3$ is obtained from Proposition~\ref{upperkey} and Corollary~\ref{lowerkey1} while the proof of the divergence of the group $G_2$ is obtained from Proposition~\ref{upperkey2} and Corollary~\ref{lowerkey2}. We also prove each group $G_m$ is one-ended in Proposition~\ref{one-ended}. In order to talk about ${\rm Div}^{G_m}_{\langle a_m\rangle}$ we need to know that each cyclic subgroup $\langle a_m \rangle$ in $G_m$ is undistorted; this is proved by Lemma~\ref{lcool}. More precisely, for each $m\geq 2$ we choose a finite generating set $S_m$ of the group $G_m$ such that the bi-infinite paths $\alpha_m$ in the Cayley graph $\Gamma(G_m,S_m)$ with edges labeled by $a_m$ are geodesic. Then we prove the divergence of $\alpha_m$ is equivalent to $r^{m-1}f(r)$ in Proposition~\ref{cyclicdivergence}. Thus, divergence of cyclic subgroup $\langle a_m \rangle$ in $G_m$ is equivalent to $r^{m-1}f(r)$. 

We note that although the divergence of the bi-infinite geodesic $\alpha_2$ in the Cayley graph $\Gamma(G_2,S_2)$ is equivalent to $rf(r)$, this is not the divergence of the ambient group $G_2$. In fact, we can always find from the construction of $G_2$ another bi-infinite geodesic in $\Gamma(G_2,S_2)$ with quadratic divergence (see Proposition~\ref{pcoolcool}) and Proposition~\ref{upperkey2} also shows that the divergence of $G_2$ is dominated above by a quadratic function.   

The corollaries below provide four important applications of the Main Theorem. 

\begin{cor}\label{cor_dense}
There exist finitely presented groups whose divergence is equivalent to $r^\alpha$ for a dense set of exponents $\alpha \in [2,\infty)$ and to $r^n\log(r)$ for integers $n\geq 2$.
\end{cor}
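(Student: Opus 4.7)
The plan is to apply the Main Theorem to a family of carefully chosen base pairs $(H, \langle c\rangle)$, then collect the resulting divergence functions and verify density.

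First I would invoke standard results on cyclic subgroup distortion to produce, for each rational $s \in (0, 1]$, a finitely presented group $H_s$ together with a proper infinite cyclic subgroup $\langle c_s\rangle$ whose extrinsic distance function $n \mapsto |c_s^n|$ is coarse Lipschitz equivalent to $r \mapsto r^s$. Such examples follow from well-known constructions realizing prescribed polynomial cyclic subgroup distortion (e.g.\ classical work of Bridson and of Ol'shanskii). For the logarithmic case I would use the Baumslag--Solitar group $BS(1, 2) = \langle a, b \mid a^{-1}ba = b^2\rangle$: the relation $b^{2^k} = a^{-k}ba^k$ implies that the extrinsic distance function of $\langle b \rangle$ in $BS(1,2)$ is coarse Lipschitz equivalent to $r \mapsto \log r$. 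In each case the subgroup is proper, the function is non-decreasing, and (after enlarging if necessary) the generating set contains $c$, so the hypotheses of the Main Theorem are met.

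Next I would feed these base pairs into the Main Theorem. For $m = 2$ with any admissible base pair the theorem gives ${\rm Div}_{G_2} \sim r^2$, handling $\alpha = 2$. For each integer $m \geq 3$ and each rational $s \in (0, 1]$, applying the theorem with $(H_s, c_s)$ produces a finitely presented group $G_m$ with ${\rm Div}_{G_m} \sim r^{m-1} \cdot r^s = r^{m-1+s}$. As $(m, s)$ ranges over $\{3, 4, \ldots\} \times ((0, 1] \cap \Q)$, the exponents $m-1+s$ densely fill $(2, \infty)$, which together with $\alpha = 2$ from $m=2$ yields a dense subset of $[2, \infty)$. For the $r^n \log r$ family with integer $n \geq 2$, I would set $m = n+1 \geq 3$ and apply the Main Theorem with $H = BS(1, 2)$ and $c = b$; since $f(r) \sim \log r$ in this case, the theorem yields ${\rm Div}_{G_m} \sim r^{m-1} \log r = r^n \log r$.

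The principal obstacle, as I see it, is merely locating the right references for the base groups $H_s$ realizing the prescribed polynomial cyclic subgroup distortion. Once these are in hand the remaining work -- verifying that each $f$ is non-decreasing, that each $\langle c \rangle$ is a proper infinite cyclic subgroup, and that the elementary density arithmetic covers $[2, \infty)$ -- is routine, and the Main Theorem does all the heavy lifting.
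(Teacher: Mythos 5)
Your proof follows essentially the same route as the paper: plug base pairs $(H,\langle c\rangle)$ with controlled extrinsic cyclic-subgroup distance into the Main Theorem, then do the density arithmetic. The only real difference is that for the polynomial case the paper does not try to realize $r^s$ for every rational $s\in(0,1]$; it instead uses a documented family, the snowflake groups of \cite{BBFS} with Perron--Frobenius matrix $(q)$ and scaling factor $p$ ($p>q\ge 2$), whose edge-group generator $c$ satisfies $|c^n|\sim n^{\log_p q}$ by Proposition 4.5 of \cite{BBFS}; since $\{\log_p q : p>q\ge 2\}$ is already dense in $(0,1)$, this suffices for density of the resulting exponents $\alpha=m-1+\log_p q$ in $[2,\infty)$. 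You yourself flag the missing citation for the $H_s$ as the principal obstacle; replacing that appeal with the BBFS snowflake groups (or the Ol'shanskii--Sapir constructions cited in Remark~\ref{remrem}) closes it. Your Baumslag--Solitar choice for the $r^n\log r$ family is exactly the paper's.
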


\begin{proof}
First we establish the $r^\alpha$ divergence functions. Given integers $p > q \geq 2$, let $H$ be a snowflake group with defining Perron-Frobenius matrix the 1-by-1 matrix $(q)$ and scaling factor $p$ and let $c$ be a generator of an edge group in the snowflake construction of \cite{BBFS}. Then by \cite{BBFS}, Proposition 4.5, the extrinsic length function of the $\langle c\rangle$ subgroup of $H$ is equivalent to $f(r) = r^{\log_p(q)}$. The Main Theorem implies that for $m \geq 3$, the finitely presented group $G_m$ has divergence equivalent to $r^{m-1+\log_p(q)}$. Taking $\alpha = m-1 + \log_p(q)$ gives the first result. 

For the functions $r^n\log(r)$, choose $H$ in the proof of the Main Theorem to be the Baumslag-Solitar group $\langle c,t \, |\, tct^{-1}=c^2\rangle $. 
By \cite{MR3384081} for example, 
 the extrinsic distance function of $\langle c \rangle$ in $H$ is coarse Lipschitz equivalent to the function $\log(r)$. The Main Theorem implies that the finitely presented groups $G_m$ for $m \geq 3$ have divergence equivalent to $r^{m-1}\log(r)$. Taking $n = m-1$ yields the second result. 
\end{proof}

\begin{cor}\label{cor_thick}
For each integer $n\geq 2$ there exists infinitely many finitely presented groups each of which is thick of order $n$ whose divergence lies strictly between $r^n$ and $r^{n+1}$. 
\end{cor}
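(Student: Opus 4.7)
The plan is to apply the Main Theorem with a judiciously chosen snowflake input group $H$, so that the resulting $G_m$ has divergence strictly between $r^n$ and $r^{n+1}$, and then to pin down the thickness order to be exactly $n$ via a Bass-Serre induction combined with the standard correspondence between thickness and divergence bounds.

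For the construction, fix $n \geq 2$ and set $m = n+1 \geq 3$. For each pair of coprime integers $p > q \geq 2$, I would let $H$ be the snowflake group built from the $1 \times 1$ Perron-Frobenius matrix $(q)$ with scaling factor $p$, exactly as in the proof of Corollary~\ref{cor_dense}. By \cite{BBFS} the extrinsic distance function of the edge generator $c \in H$ is coarse-Lipschitz equivalent to $f(r) = r^{\log_p(q)}$, with $0 < \log_p(q) < 1$. The Main Theorem then produces a finitely presented group $G_m$ with $\mathrm{Div}_{G_m} \sim r^{m-1+\log_p(q)} = r^{n + \log_p(q)}$, which lies strictly between $r^n$ and $r^{n+1}$. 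Varying $(p,q)$ gives infinitely many distinct values of $\log_p(q)$, and hence infinitely many pairwise non-quasi-isometric examples.

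For the thickness, I would prove by induction on $m \geq 1$ that $G_m$ is thick of order at most $m-1$. The base case uses that $G_1 = (H \ast_{\langle c \rangle} \Z^2) \times \Z$ is a direct product with $\Z$, so every asymptotic cone splits off an $\R$-factor and has no cut points; hence $G_1$ is unconstricted, i.e., thick of order zero. For the inductive step, $G_m$ is an HNN extension of $G_{m-1}$ with undistorted cyclic edge subgroups $\langle a_0\rangle$ and $\langle a_{m-1}\rangle$ (undistortedness of $\langle a_0\rangle$ follows from the $\Z^2$-piece in $G_1$, and of $\langle a_{m-1}\rangle$ from Lemma~\ref{lcool} applied at the previous stage). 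I would then designate the $G_m$-conjugates of $G_{m-1}$, i.e., the vertex stabilizers of the associated Bass-Serre tree, as the pieces of a network. Each piece is quasi-isometric to $G_{m-1}$ and hence thick of order $m-2$ by the inductive hypothesis; adjacent pieces share an infinite cyclic edge subgroup and therefore have unbounded coarse intersection; and the union of the pieces is coarsely dense in $G_m$. This realizes $G_m$ as thick of order at most $m-1 = n$ in the sense of \cite{MR2501302}. For the reverse inequality I would invoke the standard fact that thickness of order $k$ forces divergence $\preceq r^{k+1}$; since $r^{n+\log_p(q)}$ strictly dominates $r^n$, $G_m$ cannot be thick of any order smaller than $n$, yielding thickness exactly $n$.

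The hard part will be the inductive step for the thickness upper bound. The framework of \cite{MR2501302} requires not only that the pieces be thick of the correct order and meet in unbounded coarse intersections, but also a quantitative networked-connectivity condition and a quasi-isometric embedding of each piece into the ambient group. Establishing these properties directly from the HNN presentation hinges on knowing that $G_{m-1} \hookrightarrow G_m$ is a quasi-isometric embedding and that the cyclic edge subgroups remain undistorted and appropriately quasi-convex in $G_m$; the former should follow from standard Bass-Serre geometry once the edge subgroups are undistorted, while the latter is essentially the content of Lemma~\ref{lcool} applied at stage $m$.
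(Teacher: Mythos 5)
Your proposal is correct and follows essentially the same route as the paper's proof: choose the snowflake input with $f(r)=r^{\log_p(q)}$ so that $\mathrm{Div}_{G_m}\sim r^{m-1+\log_p(q)}$, show by induction (base case $G_1$ unconstricted as a direct product, inductive step via the HNN graph-of-groups decomposition) that $G_m$ is thick of order at most $m-1$, and then use the upper bound ``thick of order $k$ implies divergence $\preceq r^{k+1}$'' (Corollary~4.17 of \cite{MR3421592}) to rule out smaller orders. The only difference is that you spell out the vertex-stabilizer pieces in the Bass--Serre tree, whereas the paper simply cites Definition~7.1 of \cite{MR2501302} for the inductive step; this is a fair elaboration of what the paper leaves implicit.
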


\begin{proof}
For each integer $m\geq 3$ and integers $p>q\geq 2$, the groups $G_m$ defined in the first paragraph of the proof of Corollary~\ref{cor_dense} above are thick of order $m-1$ (proof given in the next paragraph) and have divergence $r^{m-1+\log_p(q)}$. Setting $n=m-1$ yields the result. 

In order to establish thickness of the $G_m$, first note that each group $G_1$ is thick of order zero (also known as unconstricted) because it is a direct product (see example 1 following Definition 3.4 of \cite{MR2501302}). By induction on $m$ and the recursive construction of the $G_m$, it follows from Definition~7.1 of \cite{MR2501302} that the groups $G_m$ are thick of order at most $m-1$. If $G_m$  were thick of order less than $m-1$, Corollary~4.17 of \cite{MR3421592} would imply that they have divergence at most $r^{m-1}$. Since the groups $G_m$ have divergence strictly greater than $r^{m-1}$, we conclude that they are thick of order exactly $m-1$. 
\end{proof}

\begin{cor}\label{cor_geodesic}
There exist finitely presented groups containing an infinite periodic geodesic whose divergence is equivalent to $r^\alpha$ for a dense set of exponents $\alpha \in [1,\infty)$ and to $r^n\log(r)$ for integers $n\geq 1$.
\end{cor}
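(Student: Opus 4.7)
The plan is to mirror the proof of Corollary~\ref{cor_dense}, but read off the cyclic subgroup divergence $\mathrm{Div}_{\langle a_m\rangle}^{G_m}$ (the first column of the Main Theorem's table) rather than the ambient group divergence of $G_m$. Crucially, the cyclic subgroup divergence is realized by an \emph{actual geodesic}: as noted in the paragraph following the Main Theorem, one can choose the finite generating set $S_m$ of $G_m$ so that the bi-infinite $a_m$-labeled path $\alpha_m\subset\Gamma(G_m,S_m)$ is geodesic, and Proposition~\ref{cyclicdivergence} then shows $\mathrm{Div}_{\alpha_m}\sim r^{m-1}f(r)$ for every $m\geq 2$. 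Since $\alpha_m$ is invariant under left translation by $a_m$, it is periodic. Thus each $\alpha_m$ supplies a periodic geodesic in a finitely presented group whose divergence is $r^{m-1}f(r)$; note that the $m=2$ case contributes $rf(r)$, in contrast to $\mathrm{Div}_{G_2}\sim r^2$, and this is precisely what pushes the dense set of exponents down to $1$ rather than only $2$.

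With this dictionary in hand I would run the two input choices from the proof of Corollary~\ref{cor_dense} in parallel. For the $r^\alpha$ family, take $H$ to be the snowflake group of \cite{BBFS} with $1\times 1$ Perron--Frobenius matrix $(q)$ and scaling factor $p$ for integers $p>q\geq 2$, so that by Proposition~4.5 of \cite{BBFS} the extrinsic length function is $f(r)\sim r^{\log_p q}$; then $\alpha_m$ has divergence equivalent to $r^{m-1+\log_p q}$. As $m\geq 2$ and $(p,q)$ vary, the exponents $\alpha=m-1+\log_p q$ meet each interval $(k-1,k)$ for $k\geq 2$ in a dense subset (because $\log_p q$ is dense in $(0,1)$), and hence form a dense subset of $[1,\infty)$. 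For the $r^n\log r$ family, take $H=\langle c,t\mid tct^{-1}=c^2\rangle$; by \cite{MR3384081} the extrinsic length function is coarse Lipschitz equivalent to $\log r$, so for each $m\geq 2$ the geodesic $\alpha_m$ has divergence $r^{m-1}\log r$, and setting $n=m-1$ realizes every integer $n\geq 1$.

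There is no genuine obstacle beyond invoking the Main Theorem and its supporting results: the corollary is essentially a restatement of the first column of the Main Theorem's table, reinterpreted via the periodic geodesic $\alpha_m$. The one point worth emphasizing is that Lemma~\ref{lcool} (undistortion of $\langle a_m\rangle$) together with the specific choice of $S_m$ preceding Proposition~\ref{cyclicdivergence} upgrades the $a_m$-labeled quasi-geodesic to a genuine geodesic; without this upgrade one would only obtain a periodic \emph{quasi-}geodesic of the required divergence, which is not what the corollary asks for.
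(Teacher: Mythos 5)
Your proposal is correct and follows essentially the same route as the paper: both invoke the Main Theorem (via Proposition~\ref{cyclicdivergence}) to read off that the periodic $a_m$-labeled geodesic in $G_m$ has divergence $r^{m-1}f(r)$ for all $m\geq 2$, and then plug in the snowflake groups and $BS(1,2)$ exactly as in the proof of Corollary~\ref{cor_dense}. Your emphasis on why $\alpha_m$ is a genuine geodesic rather than merely a quasi-geodesic is a useful clarification, but it is not a departure from the paper's argument.
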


\begin{proof}
The Main Theorem implies that for each $m\geq 2$ the divergence of an infinite periodic geodesic $\beta$ with edges labeled by $a_m$ in group $G_m$ is equivalent to the function $r^{m-1}f(r)$. Given integers $p > q \geq 2$, let $H$ be the finitely presented group and let $\langle c \rangle$ be the subgroup of $H$ with the extrinsic length function equivalent to $f(r) = r^{\log_p(q)}$ as in the proof of Corollary\ \ref{cor_dense} above. Then, the divergence of $\beta$ is equivalent to $r^{m-1+\log_p(q)}$. Taking $\alpha = m-1 + \log_p(q)$ gives the first result. 

Similarly, let $H$ be the finitely presented group and let $\langle c \rangle$ be the subgroup of $H$ with the extrinsic length function equivalent to $f(r) = \log(r)$ as in the proof of Corollary\ \ref{cor_dense} above. Then, the divergence of $\beta$ is equivalent to $r^{m-1}\log(r)$. Taking $n = m-1$ yields the second result.
\end{proof}

\begin{cor}\label{cor_m_not_c}
There exist finitely presented groups containing Morse elements which are not contracting.
\end{cor}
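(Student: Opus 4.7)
The plan is to apply the Main Theorem in the $m=2$ case to an appropriately chosen $H$, producing a finitely presented group $G_2$ in which the undistorted cyclic subgroup $\langle a_2\rangle$ has super-linear but strictly sub-quadratic divergence. I would take $H$ to be the Baumslag-Solitar group $BS(1,2) = \langle c, t \mid tct^{-1} = c^2\rangle$, as in the proof of Corollary~\ref{cor_dense}, so that the extrinsic distance function of $\langle c\rangle$ in $H$ is coarse Lipschitz equivalent to $f(r)=\log r$. The $m=2$ row of the Main Theorem then yields a finitely presented one-ended group $G_2$ with ${\rm Div}^{G_2}_{\langle a_2\rangle}\sim r\log r$.

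Next, I would verify that $a_2$ is a Morse element of $G_2$. Since $r\log r$ is super-linear, the standard characterization of Morse bi-infinite quasi-geodesics by super-linear lower divergence, applied to the periodic geodesic labeled by $a_2$ in $\Gamma(G_2,S_2)$, shows that this geodesic is Morse. Hence $a_2$ is a Morse element of $G_2$.

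To show $a_2$ is not a contracting element, I would invoke the standard fact that any contracting bi-infinite quasi-geodesic in a geodesic space has divergence bounded below by a quadratic function; the key geometric input is that balls of radius comparable to the distance to $\alpha$ have uniformly bounded projection to $\alpha$, which forces any detour around a ball of radius proportional to $r$ to have length at least comparable to $r^2$. Since $r\log r$ is strictly dominated by $r^2$ in the sense of Convention~\ref{cv}, the periodic geodesic labeled by $a_2$ is not contracting, and therefore $a_2$ is not a contracting element of $G_2$.

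The main obstacle is matching the definition of cyclic subgroup divergence used here with the versions of divergence appearing in the classical Morse and contracting characterizations, so that the super-linear lower bound can be invoked to deduce the Morse property and the failure of a quadratic lower bound can be invoked to rule out contracting. Once this bookkeeping is carried out, the whole argument reduces to the observation that the function $r\log r$ produced by the Main Theorem lies strictly between $r$ and $r^2$.
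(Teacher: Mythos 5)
Your proposal is correct and takes essentially the same approach as the paper: both apply the $m=2$ case of the Main Theorem to a suitable $H$ (you name $BS(1,2)$; the paper invokes Corollary~\ref{cor_geodesic}, which does the same), deduce the Morse property from super-linear geodesic divergence via the characterization in \cite{ACGH}, and rule out contracting via the quadratic lower bound for contracting quasi-geodesics, which the paper records as Proposition~\ref{lem_CS}.
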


\begin{proof}
By Corollary~\ref{cor_geodesic} there are finitely presented groups $G_2$ and undistorted infinite cyclic subgroups $\langle a_2 \rangle$ whose divergence functions are equivalent to $r^\alpha$ for a dense set of $\alpha \in (1,2)$ or to $r\log(r)$. By Theorem 1.3 in \cite{ACGH} the group element $g$ is Morse. If $g$ is a contracting element, we can prove the divergence of the cyclic subgroup $\langle g \rangle$ is at least quadratic by using an analogous argument as in the proof of Theorem 2.14 in \cite{MR3339446}. (We provide a proof of this fact for the reader's convenience in Proposition~\ref{lem_CS}). This is a contradiction. Therefore, $g$ is not a contracting element. 
\end{proof}

\begin{rem}
\label{remrem}
 We refer the reader to the work of Ol'shanskii-Sapir~\cite{MR1829048} for more examples of extrinsic distance functions of cyclic subgroups of finitely presented groups. These will furnish new variants of Corollaries~\ref{cor_dense}, ~\ref{cor_thick},  ~\ref{cor_geodesic}, and ~\ref{cor_m_not_c} above. 
\end{rem}

\begin{rem}
\label{flag_planting}
One can ask about the behavior of higher dimensional divergence functions of groups with appropriate finiteness conditions. There are a number of interesting ways of generalizing the examples and constructions in this paper to approach analogous questions about higher dimensional divergence. 
\end{rem}

\section{Algebraic and geometric properties of the groups $G_m$}





We fix generating sets $S_m$ for the groups $G_m$ in the Main Theorem. 
\begin{defn}[Generating sets $S_m$]
The group $G_1$ is defined in the Main Theorem as an amalgam 
$$
G_1 \; =\; (H\ast_{\langle c=a_0a_1^{-1}\rangle}{\mathbb Z}^2) \times {\mathbb Z}
$$
where $H$ has finite generating set $T$ and the free abelian group ${\mathbb Z}^2$ is generated by $a_0$ and $a_1$. We may assume $T$ contains the infinite order element $c$. Let $b$ be a generator for the ${\mathbb Z}$ factor. Then $S_1=T\cup \{a_0,a_1,b\}$ is a finite generating set for $G_1$. 

For each integer $m \geq 2$ the groups $G_m$ are defined recursively by 
$$
G_m \; =\; \langle G_{m-1}, a_m \, |\, a_m^{-1}a_0a_m=a_{m-1} \rangle
$$
and so have recursively defined finite generating sets 
$S_m=S_{m-1}\cup \{a_m\}$.
\end{defn}

The following lemma will be used repeatedly to prove the group divergence by induction in the Main Theorem.

\begin{lem}\label{ll1}
Let $m\geq 2$ be an integer. Then the inclusion map $i\!:\Gamma(G_{m-1},S_{m-1})\hookrightarrow \Gamma(G_{m},S_{m})$ is an isometric embedding.
\end{lem}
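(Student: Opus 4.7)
The plan is to show that any word $w\in (S_m)^*$ representing an element $g\in G_{m-1}$ can be reduced to a word $w'\in (S_{m-1})^*$ representing $g$ with $|w'|\le |w|$; the opposite inequality $|g|_{S_m}\le|g|_{S_{m-1}}$ is immediate from $S_{m-1}\subseteq S_m$. The reduction proceeds by induction on the number of $a_m^{\pm 1}$-letters in $w$. If that number is zero, $w$ is already a word in $S_{m-1}$. Otherwise, since $G_m$ is the HNN extension of $G_{m-1}$ with stable letter $a_m$ and associated subgroups $\langle a_0\rangle$, $\langle a_{m-1}\rangle$, Britton's lemma, applied to $w$ (which represents an element of the base group), provides a pinch subword of one of two forms: (i) $a_m^{-1}u\,a_m$ where $u\in (S_{m-1})^*$ represents some $a_0^{j}$ (and whose value in $G_m$ is $a_{m-1}^{j}$), or (ii) $a_m u\,a_m^{-1}$ where $u\in (S_{m-1})^*$ represents some $a_{m-1}^{j}$ (with value $a_0^{j}$). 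In case (i) I replace the subword by the literal word $a_{m-1}^{j}$ of length $|j|$, and in case (ii) by the literal word $a_0^{j}$. Each such replacement strips two $a_m^{\pm 1}$-letters while altering the total length by $|j|-|u|-2$.

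The main obstacle is to ensure that this length change is non-positive, and it is enough to prove the key claim that
\[
|a_0^{j}|_{S_{m-1}}\;=\;|a_{m-1}^{j}|_{S_{m-1}}\;=\;|j|
\]
for every integer $j$, since then $|u|\ge |a_0^{j}|_{S_{m-1}}=|j|$ (or the same bound with $a_{m-1}$) forces $|j|-|u|-2\le -2$. To establish the claim I construct a homomorphism $\phi\!:G_{m-1}\to\mathbb{Z}$ defined on generators by $\phi(a_k)=1$ for every $0\le k\le m-1$, $\phi(b)=0$, and $\phi(t)=0$ for every $t\in T$. Well-definedness is a routine check against the defining relations of $G_{m-1}$: the $\mathbb{Z}^2$-commutation $a_0 a_1=a_1 a_0$ sends $1+1=1+1$; the amalgamation relation $c=a_0a_1^{-1}$ sends $0=1-1$ since $c\in T$ maps to $0$; centrality of $b$ gives $0+x=x+0$; each HNN-style relation $a_k^{-1}a_0 a_k=a_{k-1}$ (for $2\le k\le m-1$) sends $-1+1+1=1=\phi(a_{k-1})$; and the defining relations of $H$ involve only elements of $T$, which all map to $0$. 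Since every generator maps into $\{-1,0,1\}$, the homomorphism $\phi$ is $1$-Lipschitz, so $|\phi(g)|\le |g|_{S_{m-1}}$ for every $g\in G_{m-1}$; applying this with $g=a_0^j$ and $g=a_{m-1}^j$ yields the required lower bounds $|j|\le |a_0^j|_{S_{m-1}}$ and $|j|\le |a_{m-1}^j|_{S_{m-1}}$, while the opposite inequalities are clear since $a_0,\,a_{m-1}\in S_{m-1}$.

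With the key claim in hand, each pinch replacement strictly lowers both the $a_m^{\pm 1}$-count (by $2$) and the length of $w$ (by at least $2$); iterating finitely often yields a word $w'\in (S_{m-1})^*$ of length at most $|w|$ that still represents $g$. Taking $w$ to be a geodesic word for $g$ over $S_m$ then gives $|g|_{S_{m-1}}\le |w'|\le |w|=|g|_{S_m}$, which together with the trivial reverse inequality shows that $i$ is an isometric embedding.
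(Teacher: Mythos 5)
Your proof is correct. It rests on the same key observation as the paper's --- the homomorphism $G_{m-1}\to\mathbb{Z}$ sending every $a_j\mapsto 1$ and all other generators to $0$, which forces $|a_0^j|_{S_{m-1}}=|a_{m-1}^j|_{S_{m-1}}=|j|$ and hence shows the two edge groups of the HNN splitting are geodesically embedded in the base --- but the paper stops there and invokes a black-box lemma (Lemma~2.2(2) of the ``isometric HNN extension'' reference), whereas you carry out the reduction by hand. Concretely, you take a geodesic word over $S_m$, locate a Britton pinch $a_m^{-\epsilon}u\,a_m^{\epsilon}$, replace it with the shorter literal word $a_{m-1}^j$ or $a_0^j$, and observe that each replacement drops the length by at least $2$ and the $a_m^{\pm1}$-count by $2$, so the process terminates with a word over $S_{m-1}$ of no greater length. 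This is essentially the proof of the cited lemma instantiated in the present setting; it buys you a self-contained argument at the cost of a little extra bookkeeping (in particular, you should be slightly careful that Britton's lemma, as usually stated, yields nontriviality of a reduced word of positive $a_m$-length, and the fact that such a word cannot lie in the base group is a one-line corollary obtained by right-multiplying by the inverse of a putative base-group representative). One small remark for completeness: this argument controls distances between vertices of the Cayley graph; the extension to arbitrary points of the geodesic metric space $\Gamma(G_{m-1},S_{m-1})$ is immediate since edges are preserved, and neither you nor the paper belabors this.
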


\begin{proof}
The group $G_m$ is an isometric HNN extension in the sense of \cite{MR1668335}, with base group $G_{m-1}$ and edge groups $\langle a_0\rangle$ and $\langle a_{m-1}\rangle$. The homomorphism $G_{m-1} \to {\mathbb Z}$ taking all the $a_j$ ($0\leq j \leq m-1$) to a generator of ${\mathbb Z}$ and all other generators of $G_{m-1}$ to the identity shows that each $\langle a_j\rangle$ is a retract of $G_{m-1}$ and so are isometrically embedded subgroups. By Lemma 2.2(2) in \cite{MR1668335} (taking $G=H=G_{m-1}$) the inclusion $G_{m-1} \hookrightarrow G_m$ is an isometric embedding. 
\end{proof}

The following two lemmas will be used to define the concepts of $k$--corner (see Definition~\ref{corner1} and Definition~\ref{corner2}) which appear in the proof of the lower bound of group divergence in the Main Theorem.

\begin{lem}
\label{lcool}
Let $0\leq i< j \leq m$ be integers such that $j\geq 2$. Let $p,q$ be arbitrary integers. Then $|a_i^{p} a_j^{q}|_{S_m}=|p|+|q|$.
\end{lem}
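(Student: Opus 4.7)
The plan is to combine Lemma~\ref{ll1} with an explicit homomorphism to $\mathbb{Z}^2$ that separates $a_i$ and $a_j$. The upper bound $|a_i^p a_j^q|_{S_m}\leq |p|+|q|$ is immediate since $a_i,a_j\in S_m$, so all the work lies in the matching lower bound.

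First I would reduce to the case $m=j$. The element $a_i^p a_j^q$ lies in the subgroup $G_j\leq G_m$, and by iterating Lemma~\ref{ll1} the inclusion $\Gamma(G_j,S_j)\hookrightarrow \Gamma(G_m,S_m)$ is an isometric embedding (if $j=m$ this step is trivial). Hence it suffices to prove $|a_i^p a_j^q|_{S_j}\geq |p|+|q|$.

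Next I would define a homomorphism $\phi\!:G_j\to\mathbb{Z}^2$ by
$$
\phi(a_0)=\phi(a_1)=\cdots=\phi(a_{j-1})=(1,0),\qquad \phi(a_j)=(0,1),
$$
and $\phi(s)=(0,0)$ for every other element of $S_j$ (all of $T$, in particular $c$, and the central generator $b$). To see that $\phi$ is well-defined, one checks each family of defining relations: relations inside $H$ are satisfied because $\phi|_T=0$; the commutator $[a_0,a_1]$ and the centrality of $b$ hold automatically since $\mathbb{Z}^2$ is abelian; the amalgamation relation $c=a_0a_1^{-1}$ is consistent because both sides map to $(0,0)$; and each HNN relation $a_k^{-1}a_0a_k=a_{k-1}$ for $2\leq k\leq j$ abelianizes to $\phi(a_0)=\phi(a_{k-1})$, which holds by construction. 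This is precisely where the hypothesis $j\geq 2$ is used: the HNN relations at levels $2,\ldots,j$ are what force the equalities $\phi(a_0)=\cdots=\phi(a_{j-1})$ and hence $\phi(c)=0$, so that $\phi$ can be taken to be trivial on $T$ regardless of the internal structure of $H$.

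Finally, since every generator of $S_j$ has $\phi$-image of $\ell^1$-norm at most $1$, any word of length $\ell$ in these generators represents an element whose $\phi$-image has $\ell^1$-norm at most $\ell$. Therefore
$$
|a_i^p a_j^q|_{S_j}\;\geq\; \|\phi(a_i^p a_j^q)\|_1\;=\;\|(p,q)\|_1\;=\;|p|+|q|,
$$
which combined with the trivial upper bound yields the asserted equality. The only genuinely non-mechanical step is the consistency check for $\phi$, but the bookkeeping above makes it routine; the conceptual insight is just that $j\geq 2$ is what lets the two target coordinates be allocated to $\{a_0,\ldots,a_{j-1}\}$ and $\{a_j\}$ without conflicting with any of the defining relations of $G_j$.
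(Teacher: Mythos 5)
Your proof is correct and uses essentially the same approach as the paper: both prove the lower bound via the homomorphism $G_j \to \mathbb{Z}^2$ sending $a_0,\dots,a_{j-1}\mapsto(1,0)$, $a_j\mapsto(0,1)$, and all other generators to $(0,0)$, then transfer to $G_m$ via the isometric embedding of Lemma~\ref{ll1}. Your verification that this homomorphism is well-defined and your explanation of where $j\geq 2$ enters are more detailed than the paper's (which asserts the homomorphism exists without checking relations), but the argument is the same.
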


\begin{proof}

There is a group homomorphism $\Psi\!: G_{j} \to \mathbb{Z}^2$ taking $a_{j}$ to the generator $(0,1)$, taking each $a_i$ to the generator $(1,0)$ for $0\leq i \leq j-1$, and taking each element in $S_{j}-\{a_0,a_1,\cdots,a_{j}\}$ to the identity $(0,0)$. Therefore, $\Psi(a_i^{p} a_{j}^{q})=(p,q)$. This implies that
$$|a_i^{p} a_{j}^{q}|_{S_{j}}\geq |\Psi(a_i^{p} a_{j}^{q})|=|p|+|q|.$$

Since $a_i$ and $a_j$ are also elements in the finite generating set $S_j$, we have $$|a_i^{p} a_{j}^{q}|_{S_{j}}=|p|+|q|.$$ 

Also, the inclusion map $i\!:\Gamma(G_{j},S_{j})\hookrightarrow \Gamma(G_{m},S_{m})$ is an isometric embedding by Lemma~\ref{ll1}. Therefore, $|a_i^{p} a_{j}^{q}|_{S_{m}}=|a_j^{p} a_{j}^{q}|_{S_{j}}=|p|+|q|$.
\end{proof}

\begin{lem}
\label{lcool2}
Let $0\leq i< j <k\leq m$ be integers. Let $p,q,n$ be arbitrary integers such that $a_i^{p}=a_j^{q}a_k^{n}$ in $G_m$. Then $p=q=n=0$.
\end{lem}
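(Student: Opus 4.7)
The plan is to combine two abelianization-style homomorphism arguments, one for the top index $k$ and one for the pair $(i,j)$, in the spirit of Lemma~\ref{lcool}, together with a small case split depending on whether $j\geq 2$ or $j=1$.

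First, by iterating Lemma~\ref{ll1} the inclusion $\Gamma(G_k,S_k)\hookrightarrow \Gamma(G_m,S_m)$ is an isometric embedding, hence injective, so the relation $a_i^p=a_j^q a_k^n$ already holds in $G_k$. I would then define a homomorphism $\Psi_1\!:G_k\to \mathbb{Z}^2$ by sending $a_\ell\mapsto (1,0)$ for $0\leq \ell\leq k-1$, $a_k\mapsto (0,1)$, and every other generator in $S_k$ (all of $T$, in particular $c$, and $b$) to $(0,0)$. One then checks that every defining relation is respected: the amalgam identification $c=a_0a_1^{-1}$ has both sides mapping to $(0,0)$; the $\mathbb{Z}^2$ relation and the centrality of $b$ are trivial in the image; each HNN relation $a_\ell^{-1}a_0a_\ell=a_{\ell-1}$ is satisfied for $2\leq \ell\leq k-1$ (both sides equal $(1,0)$) and also for $\ell=k$, where the left side computes to $-(0,1)+(1,0)+(0,1)=(1,0)$. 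Applying $\Psi_1$ to the relation $a_i^p=a_j^q a_k^n$ yields $(p,0)=(q,n)$, hence $n=0$ and $p=q$.

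Substituting $n=0$ leaves $a_i^p=a_j^p$ in $G_m$. If $j\geq 2$, then Lemma~\ref{lcool} applied to $a_i^p a_j^{-p}$ gives $|a_i^p a_j^{-p}|_{S_m}=|p|+|-p|=2|p|$, and since this element equals the identity we conclude $p=0$. In the remaining edge case $j=1$ we must have $i=0$, and the relation becomes $a_0^p a_1^{-p}=1$ in $G_m$; since $a_0$ and $a_1$ commute in the $\mathbb{Z}^2$ factor of $G_1$, I can rewrite $a_0^p a_1^{-p}=(a_0a_1^{-1})^p=c^p$, so $c^p=1$ in $G_m$. By hypothesis of the Main Theorem $c$ has infinite order in $H$, and the standard embedding theorems for amalgamated products (together with the fact that the amalgam is a direct factor of $G_1$, which embeds isometrically into $G_m$ via Lemma~\ref{ll1}) force $c$ to retain infinite order in $G_m$. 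Hence $p=0$, which together with $p=q=0$ and $n=0$ completes the proof.

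The one genuine obstacle is the edge case $j=1$, which lies outside the range of Lemma~\ref{lcool}; handling it requires stepping down into $G_1$ and explicitly invoking both the commutativity of $a_0$ and $a_1$ and the infinite order of the edge-group generator $c$ within the amalgam $H\ast_{\langle c=a_0a_1^{-1}\rangle}\mathbb{Z}^2$.
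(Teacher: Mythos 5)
Your proof is correct and follows the same general abelianization-homomorphism strategy as the paper, but with one notable improvement: you correctly identify and handle the edge case $j=1$, which the paper's own proof actually glosses over. The paper's proof deduces $n=0$ from a simpler homomorphism $\Phi\!:G_k\to\mathbb{Z}$ (sending $a_k\mapsto 1$ and all other generators to $0$), then finishes by reducing $a_i^p=a_j^q$ to Lemma~\ref{lcool}; but Lemma~\ref{lcool} explicitly requires $j\geq 2$, and its conclusion $|a_0^p a_1^q|_{S_m}=|p|+|q|$ is false for $j=1$ (take $q=-p$ to see $|c^p|$ can be much smaller than $2|p|$). Since the $j=1$ case genuinely arises in the paper's application of this lemma — in Lemma~\ref{coolhyp} the instance is $a_0^{-m}=a_{k-1}^n a_k^p$, which has $j=k-1=1$ when $k=2$ — the case split you introduce is not pedantry but actually plugs a small hole. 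Your $\Psi_1\!:G_k\to\mathbb{Z}^2$ does slightly more work than the paper's $\Phi$ by extracting $p=q$ as well as $n=0$, and you use $p=q$ to rewrite $a_0^pa_1^{-q}=c^p$ in the $j=1$ case; a marginally shorter finish would be to note that $a_0^pa_1^{-q}=e$ already forces $p=q=0$ because the $\mathbb{Z}^2$ vertex group embeds (Bass–Serre) into the amalgam, which is a direct factor of $G_1$, which in turn embeds isometrically into $G_m$ by Lemma~\ref{ll1}. Either way your argument is sound.
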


\begin{proof}
We first prove that $n=0$. In fact, there is a group homomorphism $\Phi\!: G_{k} \to \mathbb{Z}$ taking $a_{k}$ to $1$ and each generators in $S_k-\{a_k\}$ to $0$. Therefore, $\Phi(a_i^{p})=0$ and $\Phi(a_j^{q}a_k^{n})=n$ which imply that $n=0$. Thus, $a_i^{p}=a_j^{q}$ which implies that $p=q=0$ by Lemma~\ref{lcool}. 
\end{proof}

Let $f$ be a coarse Lipschitz representative for the extrinsic length function of the subgroup $\langle c\rangle$ in $H$. This means that there exists a constant $C\geq 1$ such that 
$$
\frac{1}{C}f(n) - C \; \leq \; |c^n|_T\; \leq \; Cf(n)+C 
$$
for each positive integer $n$. In order to simplify the algebra in several proofs in this paper, we will choose the coarse Lipschitz representative $f$ of $|c^n|_T$ so that it satisfies the following two conditions:
\begin{itemize}
    \item There exists a constant $C_1 \geq 1$ such that $f(n)\, \leq\,  |c^n|_T\, \leq \, C_1f(n)+C_1 $ for each positive integer $n$, and 
    \item $f(x) \, \leq \, x$ for all $x \in (0,\infty)$. 
\end{itemize}

 
 The following lemma establishes inequalities which will be used in the proofs of the upper and lower bounds on the divergence in the Main Theorem. 
 
\begin{lem}
\label{a1}
Let $H$ be as in the Main Theorem and let $c \in H$ have extrinsic length function satisfying 
$$
f(k)\; \leq \;|c^k|_T \; \leq \; C_1f(k)+C_1 
$$ 

for some function $f$. Then for all pairs of integers $n, m$ we have:
\begin{enumerate}
    \item $d_{S_1}(a_0^n,a_1^n)\leq C_1f(|n|)+C_1$. Moreover, there is a geodesic in the Cayley graph $\Gamma(G_1,S_1)$ which connects $a^n_0$ and $a^n_1$ and lies outside the open ball $B(e,|n|)$;
    \item $|a_0^ma_1^n|_{S_1}\geq f(|n|)$.
\end{enumerate}
\end{lem}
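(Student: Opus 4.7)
The plan is to obtain both parts via carefully chosen retractions from $G_1$ to $H$, together with one auxiliary homomorphism $G_1 \to {\mathbb Z}$. The key algebraic fact driving everything is that $a_0$ and $a_1$ commute in $G_1$ (they generate the ${\mathbb Z}^2$ factor of the amalgam), so $c^n = (a_0 a_1^{-1})^n = a_0^n a_1^{-n}$, and therefore $a_0^n \cdot c^{-n} = a_1^n$ and $d_{S_1}(a_0^n, a_1^n) = |c^n|_{S_1}$.

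For part (1), I would first establish that $|c^n|_{S_1} = |c^n|_T$ by exhibiting a retraction $\phi\colon G_1 \to H$ which is the identity on $H$; concretely, set $\phi(a_0) = c$, $\phi(a_1) = e$, $\phi(b) = e$, and check the only nontrivial compatibility $\phi(a_0 a_1^{-1}) = c$. Since every generator in $S_1$ maps under $\phi$ to a word of length at most $1$ in $T$, $\phi$ is distance non-increasing, giving $|h|_T \leq |h|_{S_1}$ for $h \in H$, and the reverse inequality is automatic from $T \subseteq S_1$. Then choosing any geodesic word $w$ in $T$ of length $|c^n|_T$ representing $c^{-n}$ and reading it from the vertex $a_0^n$ yields a path to $a_1^n$ of length $|c^n|_T = |c^n|_{S_1} \leq C_1 f(|n|) + C_1$, which is therefore a geodesic. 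To show this geodesic lies outside $B(e,|n|)$, define a homomorphism $\Phi\colon G_1 \to {\mathbb Z}$ by $a_0, a_1 \mapsto 1$, $b \mapsto 0$, and $T \mapsto 0$; this respects the amalgamation relation since $\Phi(a_0 a_1^{-1}) = 0 = \Phi(c)$, and every other relation is preserved trivially. Each intermediate vertex has the form $a_0^n u$ with $u \in \langle T \rangle$, so $\Phi(a_0^n u) = n$ and $|a_0^n u|_{S_1} \geq |\Phi(a_0^n u)| = |n|$.

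For part (2), the idea is to switch to a different retraction $\psi\colon G_1 \to H$ that puts $a_1$ (rather than $a_0$) in the image of $c$. Define $\psi(a_0) = e$, $\psi(a_1) = c^{-1}$, $\psi(b) = e$, and $\psi|_T = \mathrm{id}_T$; the compatibility check $\psi(a_0 a_1^{-1}) = e\cdot c = c$ holds, so $\psi$ is a well-defined retraction. As before, $\psi$ sends every generator of $S_1$ to a word of length $\leq 1$ in $T$, hence is distance non-increasing, yielding
$$
|a_0^m a_1^n|_{S_1} \;\geq\; |\psi(a_0^m a_1^n)|_T \;=\; |c^{-n}|_T \;=\; |c^n|_T \;\geq\; f(|n|),
$$
where the last inequality is the standing lower bound on the extrinsic length function.

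There is no real obstacle here; the only point that requires a small trick is realizing that part (2) with an $f(|n|)$ bound (rather than $f(|m|)$) requires sending $a_0 \mapsto e$ and $a_1 \mapsto c^{-1}$ under the retraction, while part (1) needs the opposite assignment to show the $H$-subgroup embeds isometrically and to obtain the path via a word in $T$. Once the two retractions and the homomorphism $\Phi$ are written down, the rest is immediate.
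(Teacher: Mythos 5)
Your proposal is correct and takes essentially the same approach as the paper: the retraction $G_1 \to H$ sending $a_0 \mapsto c$, $a_1, b \mapsto e$ to get $|c^n|_{S_1} = |c^n|_T$, the homomorphism $G_1 \to \mathbb{Z}$ sending $a_0, a_1 \mapsto 1$ and everything else to $0$ to keep the path outside $B(e,|n|)$, and the second retraction sending $a_0 \mapsto e$, $a_1 \mapsto c^{-1}$ for part (2). The only cosmetic difference is that you read the $T$-geodesic starting from $a_0^n$, while the paper translates it to start from $a_1^n$; both work identically.
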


\begin{proof}
We first claim that for each integer $n$ and each group element $h\in H$ we have $|a_0^nh|_{S_1}\geq |n|$ and $|a_1^nh|_{S_1}\geq |n|$. In fact, there is a group homomorphsim $\Phi\!: G_1 \to \field{Z}$ that maps each element in $S_1-\{a_0,a_1\}$ to $0$ and maps both $a_0$, $a_1$ to $1$. Therefore, $|\Phi(g)|\leq |g|_{S_1}$. Also, $\Phi(a_0^nh)=\Phi(a_1^nh)=n$. This implies that $|a_0^nh|_{S_1}\geq |n|$ and $|a_1^nh|_{S_1}\geq |n|$.

We first prove Statement (1). It is straightforward to see that there is a group homomorphism $\Psi\!: G_1 \to H$ that maps $a_0$ to $c$, maps $a_1$ to $e$, maps $b$ to $e$ and maps each element in $T$ to itself. Therefore, for each $g$ in $G_1$ we have $|\Psi(g)|_T\leq |g|_{S_1}$. Also, $\Psi(c^n)=c^n$. Then, $|c^n|_T\leq |c^n|_{S_1}$. On the other hand, $|c^n|_{S_1}\leq |c^n|_T$ because $T$ is a subset of $S_1$. Therefore, $|c^n|_T =|c^n|_{S_1}$. Since $c^n=a_1^{-n}a_0^n$ and $|c^n|_T\leq C_1f(|n|)+C_1$, we have $d_{S_1}(a_0^n,a_1^n)\leq C_1f(|n|)+C_1$. 

Let $\gamma_0$ be a geodesic in the Cayley graph $\Gamma(H,T)\subset \Gamma(G_1,S_1)$ connecting $e$ and $c^n$. Since $|c^n|_T =|c^n|_{S_1}$, the path $\gamma_0$ is also a geodesic in $\Gamma(G_1,S_1)$. Therefore, $\gamma=a_1^n\gamma_0$ is a geodesic in $\Gamma(G_1,S_1)$ connecting $a_1^n$ and $a_0^n$. Each vertex in $\gamma$ has the form $a_1^nh$ for some $h\in H$ and such a vertex lies outside the open ball $B(e,|n|)$ by paragraph one above. Therefore, $\gamma$ lies outside the open ball $B(e,|n|)$.

We now prove Statement (2). It is straightforward to see that there is a group homomorphism $\Psi'\!: G_1 \to H$ that maps $a_0$ to $e$, maps $a_1$ to $c^{-1}$, maps $b$ to $e$ and maps each element in $T$ to itself. Therefore, for each $g$ in $G_1$ we have $|\Psi'(g)|_T\leq |g|_{S_1}$. Also, $\Psi'(a_0^ma_1^n)=c^{-n}$. Therefore, $|a_0^ma_1^n|_{S_1}\geq |\Psi'(a_0^ma_1^n)|_T=|c^{-n}|_T\geq f(|n|)$.
\end{proof}

We now discuss some geometric properties of the group $G_1$ and these properties will be used in the proof of the Main Theorem. 
\begin{lem}
\label{l1}
Let $\{\delta_\rho\}$ be the divergence of the Cayley graph $\Gamma(G_1,S_1)$. Then there is a constant $C_2\geq 1$ such that $\delta_1(r)\leq C_2r+C_2$ for each $r\geq 0$. 
\end{lem}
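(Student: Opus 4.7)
The key ingredient is the direct-product decomposition $G_1 = K \times \langle b\rangle$, where $K = H\ast_{\langle c = a_0a_1^{-1}\rangle}{\mathbb Z}^2$ and the second factor is generated by the central element $b$. I would first verify the $\ell_1$-splitting of the word metric: writing $S_K := S_1 \setminus \{b\} = T \cup \{a_0, a_1\}$, every $g \in G_1$ factors uniquely as $g = kb^n$ with $k \in K$, and $|g|_{S_1} = |k|_{S_K} + |n|$. This follows by grouping the letters of a geodesic word for $g$ into $S_K$-letters (which spell $k$ after commuting all $b$'s to the end, hence contribute at least $|k|_{S_K}$) and $b$-letters (whose signed count is $n$, hence at least $|n|$ in number); the reverse inequality is obvious. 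After reducing to vertices at a bounded additive cost, any two points $x_1, x_2 \in S(e,r)$ take the form $x_i = k_i b^{n_i}$ with $|k_i|_{S_K} + |n_i| = r$, and the task is to connect them by a path of length $O(r)$ every point of which has word length at least $r$.

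The strategy is to ``go around the ball'' by pushing both points to height $\pm 3r$ in the $b$-coordinate. Suppose first that $n_1, n_2 \geq 0$; the case $n_1, n_2 \leq 0$ is symmetric under the automorphism $b \mapsto b^{-1}$. I would use the three-leg path
\[
x_1 \; \to \; k_1 b^{3r} \; \to \; k_2 b^{3r} \; \to \; x_2,
\]
where the middle leg follows a $K$-geodesic between $k_1$ and $k_2$ at height $3r$. The $\ell_1$ splitting guarantees word length at least $r$ at every intermediate point: on the first and third legs the $b$-coordinate remains $\geq n_i \geq 0$ with $K$-coordinate $k_i$, contributing at least $|k_i|_{S_K} + n_i = r$, while on the middle leg the $b$-coordinate is $3r$. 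The total length is at most $8r$.

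In the remaining case $n_1 > 0 > n_2$ (or vice versa), the analogous template would have to cross $b$-coordinate $0$, potentially re-entering the ball. I would then detour through the point $a_0^r$, which satisfies $|a_0^r|_{S_1} = r$ by the homomorphism $G_1 \to {\mathbb Z}$ sending $a_0, a_1 \mapsto 1$ and all other generators to $0$ (compare the proof of Lemma~\ref{a1}). The seven-leg path
\[
x_1 \to k_1 b^{3r} \to b^{3r} \to a_0^r b^{3r} \to a_0^r b^{-3r} \to b^{-3r} \to k_2 b^{-3r} \to x_2
\]
has total length at most $3r + r + r + 6r + r + r + 3r = 16r$; along the long crossing leg $a_0^r b^{3r} \to a_0^r b^{-3r}$, the $K$-coordinate $a_0^r$ contributes a constant $r$ to the word length, keeping us outside $B(e,r)$. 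The main technical point is precisely this opposite-sign case: without the detour through a point of sufficiently large $K$-norm, the path would dip back inside the ball. Combining both cases and absorbing the vertex-reduction constants yields $\delta_1(r) \leq C_2 r + C_2$ for an explicit $C_2$.
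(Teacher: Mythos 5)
Your proof is correct and uses essentially the same approach as the paper: exploit the direct-product structure $G_1 = K \times \langle b\rangle$, push the $b$-coordinate out to a height comparable to $r$, and route through a far-out point in the $K$-factor so that the path stays outside $B(e,r)$. The only organizational difference is that the paper always routes through a point $g_0$ on the $r$-sphere of the $K$-factor (there stated abstractly for any product $G\times H$ with $H$ having extendable geodesics), which gives a uniform $\leq 8r$ bound with no case split, whereas you only invoke the detour through $a_0^r$ in the opposite-sign case; this is a cosmetic distinction and both arguments are sound.
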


\begin{proof}

The linear upper bound on divergence in this case follows from the direct product structure of $G_1$ and the fact that ${\mathbb Z}$ has extendable geodesics. We give a proof of this fact for completeness below. The proof is similar to that of Lemma~7.2 of \cite{MR3073670}. 

Assume that $G$ and $H$ are infinite, finitely generated groups and that the Cayley graph of $H$ has extendable geodesics. If $\{\delta_\rho\}$ is the divergence of the Cayley graph of the group $G\times H$, then there is a constant $C_2\geq 1$ such that $\delta_1(r)\leq C_2r+C_2$ for each $r\geq 0$. 

Given two elements $(g_1,h_1)$ and $(g_2,h_2)$ on the $n$--sphere centered at the identity in the Cayley graph of $G\times H$, we show how to connect them by a linear length path that avoids the open $n$--ball centered at the identity in the Cayley graph of $G\times H$. To this end, we extend the geodesic $[1,h_1]$ to a geodesic $[1,h_1k_1]$ with endpoint on the $n$-sphere centered at the identity in the Cayley graph of $H$. Likewise we extend the geodesic $[1,h_2]$ to a geodesic $[1,h_2k_2]$ with endpoint on the $n$-sphere centered at the identity in the Cayley graph of $H$. Pick a point $g_0$ on the $n$-sphere centered at the identity in the Cayley graph of $G$.

The following concatenation of paths avoids the $n$-ball centered at the identity in the Cayley graph of $G\times H$ and has length at most $8n$. The arrows indicate geodesic paths in the Cayley graph of $G\times H$ which are geodesic in one factor and constant paths in the other factor. 
$$
(g_1,h_1) \; \stackrel{\leq n} \longrightarrow \; (g_1,h_1k_1) 
 \; \stackrel{\leq 2n}\longrightarrow \; (g_0,h_1k_1) 
  \; \stackrel{\leq 2n}\longrightarrow \; (g_0,h_2k_2) 
   \; \stackrel{\leq 2n}\longrightarrow \; (g_2,h_2k_2) 
    \; \stackrel{\leq n}\longrightarrow \; (g_2,h_2) 
$$

\end{proof}


The following lemma is a direct result of Lemma~\ref{l1} and it will be used for the proof of the upper bound of the group divergence in the Main Theorem. 

\begin{lem}
\label{s1}
There is a constant $C_3>0$ such that for each non-zero integer $n$ we have the following holds:
\begin{enumerate}
    \item There is a path in $\Gamma(G_1,S_1)$ which connects $a_0^{-n}$ and $a_0^{n}$, lies outside the open ball $B(e,|n|)$, and has length at most $C_3|n|+C_3$.
    \item For each group element $s\in S_1\cup S_1^{-1}$ there is a path in $\Gamma(G_1,S_1)$ which connects $a_0^{2n}$ and $sa_0^{2n}$, lies outside the open ball $B(e,|n|)$, and has length at most $C_3|n|+C_3$. 
\end{enumerate}
\end{lem}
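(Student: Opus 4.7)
The plan is to prove each of the two parts by giving an explicit short path and verifying that it lies outside $B(e,|n|)$. Part (1) will use the direct product structure of $G_1$ (which is also the engine behind Lemma~\ref{l1}), and part (2) will need only a single edge.

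For part (1), I will first note that $|a_0^{\pm n}|_{S_1}=|n|$: the upper bound is trivial, and for the lower bound I will use the homomorphism $\Phi\colon G_1\to\Z$ that sends $a_0,a_1\mapsto 1$ and all other generators in $S_1$ to $0$. So both $a_0^{-n}$ and $a_0^n$ lie on the sphere $S(e,|n|)$. To apply Lemma~\ref{l1} I need them to be in the same component of the complement of $B(e,|n|)$, which I will establish by building an explicit path that uses the $\Z$-factor generated by $b$: from $a_0^{-n}$, ascend in the $b$-direction to $b^{|n|}a_0^{-n}$, cross over through the vertices $b^{|n|}a_0^{j}$ for $j=-n,-n+1,\dots,n$ using $a_0$-edges, then descend to $a_0^n$. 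The direct product structure gives $|b^{k}a_0^{j}|_{S_1}=|k|+|j|$, so every vertex on this path has length at least $|n|$. Since in a Cayley graph any point on a unit edge from $x$ to $y$ is at distance $\min(|x|+t,\,|y|+(1-t))\geq\min(|x|,|y|)$ from $e$, the entire path (vertices and edges) lies outside $B(e,|n|)$. Either the explicit length bound $4|n|$ or an appeal to Lemma~\ref{l1} then gives the desired $C_3|n|+C_3$ bound.

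For part (2), I will simply use the single $s$-edge from $a_0^{2n}$ to $sa_0^{2n}$; this has length $1$. The endpoint lengths are $|a_0^{2n}|_{S_1}=2|n|$ and $|sa_0^{2n}|_{S_1}\geq 2|n|-1$ (since multiplication by a single generator changes word length by at most one), both of which are $\geq|n|$ whenever $|n|\geq 1$. Using the same observation that a point on an edge is at distance at least $\min(|x|,|y|)$ from $e$, the edge avoids $B(e,|n|)$. Taking $C_3$ large enough to simultaneously handle the constants in part (1) (e.g.\ $C_3=\max\{C_2,4\}$) finishes the proof.

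The main thing to be careful about is the claim that edges, and not merely their endpoints, stay out of the open ball. A naive parametrization using only the triangle inequality $d(e,p(t))\geq|x|-t$ looks as if edges between two vertices on the sphere could dip slightly inside, but in a Cayley graph with the standard length metric this cannot happen: any path from $e$ to a point on the edge from $x$ to $y$ must first arrive at one of the endpoints, so $d(e,p(t))=\min(|x|+t,|y|+(1-t))\geq\min(|x|,|y|)$. Once this is observed, both parts are routine.
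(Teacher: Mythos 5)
Your proof is correct, and it is more explicit and (for part (2)) more elementary than the paper's. The paper proves both parts by invoking Lemma~\ref{l1}, i.e.\ the linear divergence of $G_1$: for part (1) it just notes $a_0^{-n},a_0^{n}\in S(e,|n|)$ and applies $\delta_1(|n|)\leq C_2|n|+C_2$; for part (2) it first routes each of $a_0^{2n}$ and $sa_0^{2n}$ back to $S(e,|n|)$ along the paths based at $e$ labeled $a_0^{2n}$ and $sa_0^{2n}$, and then joins the two sphere points via Lemma~\ref{l1}, giving the slightly larger constant $C_2+2$. Your part (1) unpacks the proof of Lemma~\ref{l1} (the ``$b$-detour'') and specializes it to the two $a_0$-rays, yielding the explicit path of length $4|n|$; it is the same underlying mechanism but self-contained. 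Your part (2) is genuinely simpler: you observe that $|a_0^{2n}|_{S_1}=2|n|$ and $|sa_0^{2n}|_{S_1}\geq 2|n|-1\geq|n|$, so the single $s$-edge between them already avoids $B(e,|n|)$ --- no appeal to Lemma~\ref{l1} is needed at all. The remark about edge interiors (that in a metric graph $d(e,p(t))=\min(|x|+t,\,|y|+(1-t))\geq\min(|x|,|y|)$) is exactly the right point to make and is needed in both constructions; the paper leaves this implicit. Your argument buys a cleaner constant ($C_3=4$ works) and avoids the mild imprecision in the paper's reduction of part (2) to Lemma~\ref{l1} (one must check that the terminal segments of the two paths stay outside the ball, which your version sidesteps).
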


\begin{proof}
Statement (1) follows directly from Lemma~\ref{l1} with $C_3 \geq C_2$. Statement (2) also follows directly from Lemma~\ref{l1} with $C_3 = C_2 + 2$ since the paths based at $e$ and labeled by $a_0^{2n}$ and $sa_0^{2n}$ intersect $S(e,|n|)$. 
\end{proof}

\section{The divergence of $G_m$}

\subsection{The upper bound}

The following two lemmas will be used in the proof of the upper bound of the group divergence in the Main theorem (see Proposition~\ref{upperkey} and Proposition~\ref{upperkey2}). Note that Lemma~\ref{basic} is a variation of Lemma 5.5 in \cite{MS2018}. We include its proof here for the convenience of the reader.

\begin{lem}
\label{basic}
Let $X$ be a geodesic space and $x_0$ be a point in $X$. Let $r$ be a positive number and let $x$ be a point on the sphere $S(x_0,r)$. Let $\alpha_1$ and $\alpha_2$ be two rays with the same initial point $x$ such that $\alpha_1\cup\alpha_2$ is a bi-infinite geodesic. Then either $\alpha_1$ or $\alpha_2$ has an empty intersection with the open ball $B(x_0,r/2)$.
\end{lem}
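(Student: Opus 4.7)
The plan is to argue by contradiction. Suppose that each of the rays $\alpha_1$ and $\alpha_2$ meets the open ball $B(x_0, r/2)$, and extract witness points $y_1 \in \alpha_1 \cap B(x_0, r/2)$ and $y_2 \in \alpha_2 \cap B(x_0, r/2)$. The strategy is to derive two incompatible bounds on the distance $d(y_1, y_2)$: a lower bound coming from the geodesic structure along $\alpha_1 \cup \alpha_2$, and an upper bound coming from the triangle inequality routed through $x_0$.

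For the lower bound, I would use the reverse triangle inequality: since $x$ lies on the sphere $S(x_0,r)$ while $y_i$ lies strictly inside $B(x_0,r/2)$, we get $d(x,y_i) \geq d(x_0,x) - d(x_0,y_i) > r - r/2 = r/2$ for each $i=1,2$. Because $\alpha_1 \cup \alpha_2$ is a bi-infinite geodesic and $y_1, y_2$ lie on the two opposite rays emanating from the common point $x$, the distances add along this geodesic, yielding $d(y_1, y_2) = d(y_1, x) + d(x, y_2) > r$. For the upper bound, the ordinary triangle inequality through $x_0$ gives $d(y_1, y_2) \leq d(y_1, x_0) + d(x_0, y_2) < r/2 + r/2 = r$, which contradicts the lower bound.

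I do not expect any real obstacle in this argument; it is purely an application of the two forms of the triangle inequality together with the additivity of arclength along a geodesic. The only point that deserves a brief explicit justification is the additivity step $d(y_1,y_2) = d(y_1,x) + d(x,y_2)$, which I would phrase as an immediate consequence of $y_1$ and $y_2$ belonging to a single geodesic passing through $x$ on opposite sides.
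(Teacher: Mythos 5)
Your proof is correct and is essentially identical to the paper's: same contradiction setup, same witness points in the ball, same lower bound via the reverse triangle inequality plus geodesic additivity, and same upper bound via the triangle inequality through $x_0$. No differences worth noting.
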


\begin{proof}
Assume by the way of contradiction that both rays $\alpha_1$ and $\alpha_2$ have non-empty intersection with the open ball $B(x_0,r/2)$. Thus, there is $u\in \alpha_1$ and $v\in \alpha_2$ such that $d(x_0,u)<r/2$ and $d(x_0,v)<r/2$. Since $d(x_0,x)=r$, then by the triangle inequality we have $d(x,u)>r/2$ and $d(x,v)>r/2$. Also, $\alpha_1\cup\alpha_2$ is a bi-infinite geodesic. Therefore, we have $$d(u,v)=d(u,x)+d(x,v)>r/2+r/2>r.$$ By the triangle inequality again, we have $$d(u,v)\leq d(u,x_0)+d(x_0,v)< r/2+r/2<r$$
which is a contradiction. Therefore, at least one of $\alpha_1$ and $\alpha_2$ has an empty intersection with the open ball $B(x_,r/2)$.

Note that this can be interpreted as a type of quasi-convexity result; namely, it states that the $r/2$--ball is ``$r/2$--quasi-convex.''
\end{proof}


\begin{lem}
\label{ag}
For each integer $m\geq 2$ there are constants $M_m$ and $N_m$ such that the following two statements hold:
\begin{itemize}
    \item[$(P_m)$] For each non-zero integers $n$ and $r$ such that $|n|=r$ and $\epsilon \in \{-1,+1\}$ there is a path in $\Gamma(G_m,S_m)$ which connects $a_0^{2n}$ and $a_m^{\epsilon}a_0^{2n}$, lies outside the open ball $B(e,r)$, and has length at most $M_mr^{m-2} \bigl(f(M_mr)+1\bigr)$.
    \item[$(Q_m)$] For each non-zero integers $n_1$, $n_2$ and $r$ such that $|n_1|=|n_2|=r$ there is a path in $\Gamma(G_m,S_m)$ which connects $a_0^{n_1}$ and $a_m^{n_2}$, lies outside the open ball $B(e,r)$, and has length at most $N_mr^{m-1} \bigl(f(N_mr)+1\bigr)$.
\end{itemize}
\end{lem}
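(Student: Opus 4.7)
The plan is to prove $(P_m)$ and $(Q_m)$ simultaneously by induction on $m\geq 2$, with $(P_m)$ built from $(Q_{m-1})$ for $m\geq 3$ and $(Q_m)$ built from $(P_m)$ for every $m\geq 2$. The essential algebraic identities, obtained by iterating the relation $a_m^{-1}a_0a_m=a_{m-1}$, are $a_0^ka_m=a_ma_{m-1}^k$ and $a_m^{-1}a_0^k=a_{m-1}^ka_m^{-1}$.

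For the base case $m=2$ and $\epsilon=+1$, I establish $(P_2)$ by concatenating the edge labeled $a_2$ from $a_0^{2n}$ to $a_0^{2n}a_2=a_2a_1^{2n}$ with the left-translate by $a_2$ of the Lemma~\ref{a1}(1) path from $a_1^{2n}$ to $a_0^{2n}$; the concatenation terminates at $a_2a_0^{2n}$. Lemma~\ref{a1}(1) places the original path outside $B(e,2r)$, so its $a_2$-translate lies outside $B(a_2^{-1},2r)\supseteq B(e,2r-1)\supseteq B(e,r)$ for $r\geq 1$, and the total length is at most $1+C_1f(2r)+C_1$. The case $\epsilon=-1$ is analogous via the identity $a_2^{-1}a_0^{2n}=a_1^{2n}a_2^{-1}$, concatenating the two sub-paths in the opposite order.

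I then deduce $(Q_2)$ (assuming $n_1,n_2>0$; other sign combinations are symmetric) in three steps: (i) the geodesic segment along $a_0$ from $a_0^{n_1}$ to $a_0^{2n_1}$ of length $r$; (ii) for $k=0,1,\ldots,n_2-1$, the iterated left-translate by $a_2^k$ of the $(P_2)$ path, producing a walk from $a_0^{2n_1}$ to $a_2^{n_2}a_0^{2n_1}$; (iii) the geodesic segment along $a_0^{-1}$ from $a_2^{n_2}a_0^{2n_1}$ to $a_2^{n_2}$ of length $2r$. Segments (i) and (iii) lie outside $B(e,r)$ by Lemma~\ref{lcool}; the $k$-th translate in step (ii) lies outside $B(a_2^{-k},2r-1)\supseteq B(e,2r-1-k)\supseteq B(e,r)$ since $k\leq n_2-1=r-1$. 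Summing the lengths yields a bound of the required form $N_2r(f(N_2r)+1)$ after enlarging $N_2$ appropriately.

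For the inductive step $m\geq 3$, property $(P_m)$ is built from $(Q_{m-1})$ by precisely the base-case template: the Lemma~\ref{a1}(1) path is replaced by a $(Q_{m-1})$ path connecting $a_{m-1}^{2n}$ and $a_0^{2n}$ in $\Gamma(G_{m-1},S_{m-1})$, which sits inside $\Gamma(G_m,S_m)$ isometrically by Lemma~\ref{ll1}. Property $(Q_m)$ then follows from $(P_m)$ by the same three-step construction used for $(Q_2)$, substituting $(P_m)$ for $(P_2)$ in step (ii). The principal obstacle throughout is the distance bookkeeping in step (ii): I must verify that the $r$ iterated left-translates by $a_m^k$ remain outside $B(e,r)$ even as they drift toward the identity. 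This works precisely because the $(P_m)$ paths are designed to avoid a ball of radius roughly $2r$---the factor of $2$ in the formulation $a_0^{2n}$, as opposed to $a_0^n$, provides exactly the buffer needed to absorb the $r-1$ units of drift that accumulate over the iterations.
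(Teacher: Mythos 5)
Your inductive scheme---$(P_2)$ as base case, then $(P_m)\implies(Q_m)$ and $(Q_m)\implies(P_{m+1})$---matches the paper's exactly, and the comb/ladder constructions for each implication are the same in spirit: an initial $a_0$-tail, a sequence of translated $(P_m)$-paths, and a terminal $a_0$-tail. There is, however, a genuine gap in the distance bookkeeping of your $(P_m)\implies(Q_m)$ step, and it is worth being precise about it.

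In step (ii) you iterate translates by $a_m^k$ of a $(P_m)$-path starting at $a_0^{2n_1}$, i.e.\ you invoke $(P_m)$ at radius $r=|n_1|$. The statement $(P_m)$ then only guarantees that this path avoids $B(e,r)$, so its translate by $a_m^k$ is only guaranteed to avoid $B(a_m^k,r)$, which for $k$ close to $r-1$ contains no ball of definite radius around $e$. Your derivation ``the $k$-th translate lies outside $B(a_m^{-k},2r-1)\supseteq B(e,2r-1-k)\supseteq B(e,r)$'' silently assumes that the $(P_m)$-path avoids $B(e,2r-1)$, not merely $B(e,r)$. That stronger avoidance is indeed true of the path you actually constructed in the base case (your Lemma~\ref{a1}(1) piece avoids $B(e,2r)$ and the translated copy avoids $B(e,2r-1)$), and you even flag it in your closing remark about ``a ball of radius roughly $2r$''---but it is not what $(P_m)$ asserts, so you cannot use it in the induction as written. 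The paper's fix is to apply $(P_m)$ with the doubled parameters, starting from $a_0^{4n_1}$ rather than $a_0^{2n_1}$, so that the invoked $(P_m)$-path avoids $B(e,2r)$ as a direct consequence of the statement; then translation by $a_m^{i-1}$ with $i-1<r$ gives avoidance of $B(e,r)$ cleanly. Equivalently, you could strengthen the inductive statement $(P_m)$ to assert avoidance of $B(e,2r-1)$ and verify that this is preserved through the $(Q_m)\implies(P_{m+1})$ step (it is, since the edge and the $a_{m+1}$-translated $(Q_m)$-path, applied at radius $2r$, both avoid $B(e,2r-1)$). Either repair closes the gap; as stated, the appeal to $(P_m)$ does not justify the claimed ball avoidance.

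As a side note, left-translation by $g$ of a path avoiding $B(e,R)$ yields a path avoiding $B(g,R)$, not $B(g^{-1},R)$; you wrote $B(a_2^{-1},2r)$ and $B(a_2^{-k},2r-1)$ where the centres should be $a_2$ and $a_2^{k}$. Since $|a_2^{\pm k}|=k$ the numerical conclusion is unaffected, but the notation is inverted.
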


\begin{proof}
We prove the above lemma by induction on $m$ using the following strategy. We first prove Statement $P_2$. Then we prove the implication $(P_m)\implies (Q_m)$ for each $m\geq 2$. Finally, we prove the implication $(Q_m)\implies (P_{m+1})$ for each $m\geq 2$. 

For Statement $(P_2)$ we only prove it for the case of $\epsilon=-1$ and the proof for the case of $\epsilon=1$ is almost identical. Let $C_1$ be the constant in Lemma~\ref{a1}. By Lemma~\ref{ll1} and Lemma~\ref{a1} there is a path $\alpha$ in $\Gamma(G_2,S_2)$ which connects $a_0^{2n}$ and $a_1^{2n}$, lies outside the open ball $B(e,r)$, and has length at most $C_1 f(2r)+C_1$. Since $a_2^{\epsilon}a_0^{2n}=a_2^{-1}a_0^{2n}=a_1^{2n}a_2^{-1}$, we can connect $a_1^{2n}$ and $a_2^{\epsilon}a_0^{2n}$ by an edge $f$ labeled by $a_2$ and this edge lies outside the open ball $B(e,r)$. Therefore, $\alpha\cup f$ is a path in $\Gamma(G_2,S_2)$ which connects $a_0^{2n}$ and $a_2^{\epsilon}a_0^{2n}$, lies outside the open ball $B(e,r)$, and has length at most $C_1 f(2r)+C_1+1$. Therefore, Statement $P_2$ is proved by choosing an appropriate constant $M_2 \geq \max\{C_1 + 1, 2\}$.

We now prove the implication $(P_m)\implies (Q_m)$ for each $m\geq 2$. We only prove Statement $(Q_m)$ for the case $n_2>0$ and the proof for the case $n_2<0$ is almost identical. We first connect $a_0^{n_1}$ and $a_0^{4n_1}$ by a path $\eta_0$ labeled by the word $a_0^{3n_1}$. By Lemma~\ref{lcool}, the path $\alpha_1$ lies outside the open ball $B(e,r)$ and has the length exactly $3r$. By Statement $(P_m)$ there is a path $\eta_1$ in $\Gamma(G_m,S_m)$ which connects $a_0^{4n_1}$ and $a_ma_0^{4n_1}$, lies outside the open ball $B(e,2r)$, and has length at most $M_m(2r)^{m-2} \bigl(f(2M_mr)+1\bigr)$. 

For each $1\leq i \leq n_2$ let $\eta_i=a_m^{i-1}\eta_1$. Then each $\eta_i$ is a path in the Cayley graph $\Gamma(G_m,S_m)$ which connects $a_m^{i-1}a_0^{4n_1}$ and $a_m^{i}a_{0}^{4n_1}$, lies outside the open ball $B(a_m^{i-1},2r)$, and has length at most $M_m(2r)^{m-2} \bigl(f(2M_mr)+1\bigr)$. Since $d_{S_{m}}(e,a_m^{i-1})= i-1<n_2=r$, each path $\eta_i$ lies outside the open ball $B(e,r)$. Let $\eta_{n_2+1}$ be the path connecting $a_m^{n_2}a_{0}^{4n_1}$ and $a_m^{n_2}$ labeled by $a_{0}^{-4n_1}$. Then $\eta_{n_2+1}$ has length exactly $4r$ and each vertex in $\eta_{n_2+1}$ has the form $a_m^{n_2}a_0^j$ for $0\leq j\leq 4n_1$. By Lemma~\ref{lcool} we have $|a_m^{n_2}a_0^j|_{S_m}=n_2+j\geq r$. Therefore, $\eta_{n_2+1}$ lies outside the open ball $B(e,r)$. Let $\eta=\eta_0\cup\eta_1\cup\eta_2\cup \cdots\cup\eta_{n_2}\cup\eta_{n_2+1}$. Then $\eta$ is a path in $\Gamma(G_m,S_m)$ which connects $a_0^{n_1}$ and $a_m^{n_2}$, lies outside the open ball $B(e,r)$, and has length at most $r\bigg[M_m(2r)^{m-2} \bigl(f(2M_mr)+1\bigr)\bigg]+7r$. Therefore, Statement $(Q_m)$ is proved by choosing an appropriate constant $N_m \geq \max\{2^{m-2}M_m, 2M_m\}+7$.

We now prove the implication $(Q_m)\implies (P_{m+1})$ for each $m\geq 2$. For Statement $(P_{m+1})$ we only prove it for the case of $\epsilon=-1$ and the proof for the case of $\epsilon=1$ is almost identical. By Lemma~\ref{ll1} and Statement $(Q_m)$ there is a path $\alpha$ in $\Gamma(G_{m+1},S_{m+1})$ which connects $a_0^{2n}$ and $a_m^{2n}$, lies outside the open ball $B(e,r)$, and has length at most $N_m(2r)^{m-1} \bigl(f(2N_mr)+1\bigr)$. Since $a_{m+1}^{\epsilon}a_0^{2n}=a_{m+1}^{-1}a_0^{2n}=a_m^{2n}a_{m+1}^{-1}$, we can connect $a_m^{2n}$ and $a_{m+1}^{\epsilon}a_0^{2n}$ by an edge $f$ labeled by $a_{m+1}$ which lies outside the open ball $B(e,r)$. Therefore, $\alpha\cup f$ is a path in $\Gamma(G_{m+1},S_{m+1})$ which connects $a_0^{2n}$ and $a_{m+1}^{\epsilon}a_0^{2n}$, lies outside the open ball $B(e,r)$, and has length at most $N_m(2r)^{m-1} \bigl(f(2N_mr)+1\bigr)+1$. Therefore, Statement $(P_{m+1})$ is proved by choosing an appropriate constant $M_{m+1} \geq 2^{m-1}N_m + 1$.
\end{proof}

We are now ready to prove the upper bound of the divergence of the group $G_m$ for $m\geq 3$. The proof of the next two propositions follow the same strategy. Here is the geometric intuition behind this strategy. The basic idea is to connect an arbitrary point $x$ on the sphere $S(e,r)$ to one of the points $a_0^r$ or $a_0^{-r}$ by a path lying outside the ball $B(e,r/2)$ with desired control on its overall length. This is achieved as follows. 
\begin{itemize}
    \item Consider the bi-infinite geodesic path through the point $x$ with edges labeled by $a_0$. Because this path is a geodesic, Lemma~\ref{basic} implies that the two rays emanating from $x$ cannot both penetrate $B(e,r/2)$. Suppose that the positive $a_0$-ray emanating from $x$ does not penetrate $B(e,r/2)$.
    \item Construct a ``comb" by attaching a path (``tooth") labeled $a_0^{4r}$ at each vertex along a geodesic from $e$ to $x$. We argue that it is possible to connect the endpoints of successive teeth of this comb by paths which avoid $B(e,r)$ and which have controlled total length. The upper bounds on these lengths come from Lemma~\ref{ag} and Lemma~\ref{s1}.  
\end{itemize}
In the proof of Proposition~\ref{upperkey} we have to consider all the stable letters $a_3, \ldots, a_m$. The proof of Proposition~\ref{upperkey2} is easier since we only are dealing with a single generator $a_2$ which lies outside of a group with linear divergence.

\begin{prop}
\label{upperkey}
Let $m\geq 3$ be an integer. Let $\{\delta_\rho\}$ be the divergence of the Cayley graph $\Gamma(G_m,S_m)$. Then there is a constant $A_m$ such that for each $\rho \in (0,1/2]$ we have $$\delta_\rho(r)\leq A_m r^{m-1}\bigl(f(A_mr)+1\bigr) \text{ for each } r\geq 1.$$ In particular, $\delta_\rho(r) \preceq r^{m-1}f(r)$ for each $\rho \in (0,1/2]$,
\end{prop}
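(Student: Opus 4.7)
The plan is to reduce to the following intermediate claim: for each $x \in S(e,r)$ one can connect $x$ to one of the anchor points $a_0^r$ or $a_0^{-r}$ by a path in $\Gamma(G_m,S_m)$ of length at most $A_m r^{m-1}(f(A_m r)+1)$ lying outside $B(e,r/2)$. Since $\rho r \leq r/2$ for $\rho \in (0,1/2]$, applying the claim to both endpoints $x_1, x_2 \in S(e,r)$ and, if the two anchors chosen differ, bridging $a_0^r$ and $a_0^{-r}$ via Lemma~\ref{s1}(1) by a path of length $\leq C_3 r + C_3$ outside $B(e,r)$, produces the desired upper bound on $\delta_\rho(r)$.

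To establish the intermediate claim, consider the bi-infinite path through $x$ with edges labeled $a_0$; by Lemma~\ref{lcool} (together with Lemma~\ref{ll1}) this is a geodesic in $\Gamma(G_m,S_m)$. By Lemma~\ref{basic}, one of its two $a_0$-rays emanating from $x$ is disjoint from $B(e,r/2)$; without loss of generality, assume the positive ray is. Following the outline stated before the proposition, fix a geodesic $e=v_0,v_1,\dots,v_r=x$ in $\Gamma(G_m,S_m)$ and attach at each $v_i$ the tooth labeled $a_0^{4r}$ with tip $w_i=v_ia_0^{4r}$. The path from $x$ to $a_0^r$ is obtained by traveling the tooth at $v_r$ from $x$ to $w_r$ (cost $4r$, outside $B(e,r/2)$ by the chosen ray); then jumping successively $w_r \to w_{r-1} \to \cdots \to w_0 = a_0^{4r}$ outside $B(e,r)$; and finally descending the $a_0$-axis from $a_0^{4r}$ to $a_0^r$ (cost $3r$, outside $B(e,r/2)$ by Lemma~\ref{lcool}). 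If instead the negative ray was chosen, the mirror construction produces a path ending at $a_0^{-r}$.

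The core estimate is bounding each tip-to-tip jump. Writing $v_{i+1}=v_is_{i+1}$ with $s_{i+1}\in S_m\cup S_m^{-1}$, the jump $w_i\to w_{i+1}$ is obtained by left-translating by $v_i$ a path from $a_0^{4r}$ to $s_{i+1}a_0^{4r}$. When $s_{i+1}\in S_1\cup S_1^{-1}$, Lemma~\ref{s1}(2) (with $n=2r$) supplies such a path outside $B(e,2r)$ of length at most $2C_3r+C_3$. When $s_{i+1}=a_j^{\pm 1}$ for some $2\leq j\leq m$, Lemma~\ref{ag} statement $(P_j)$ (with $n=2r$) supplies one outside $B(e,2r)$ of length at most $M_j(2r)^{j-2}\bigl(f(2M_jr)+1\bigr)$, with the worst case $j=m$ giving $O(r^{m-2}(f(Ar)+1))$ per jump. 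Because $|v_i|_{S_m}\leq r$, the triangle inequality yields $B(e,r)\subset B(v_i,2r)$, so each translated jump stays outside $B(e,r)$. Summing the $r$ jumps together with the $O(r)$ contributions from the initial tooth and the descent to $a_0^r$ then yields the claimed bound, after absorbing $C_3, M_2,\dots,M_m$ and the inner factors of $2$ into a single constant $A_m$.

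The main obstacle is not conceptual---the lemmas already provide the tooth-to-tooth shortcuts in the right ambient groups---but organizational: one must verify via Lemma~\ref{ll1} that the paths supplied by $(P_j)$ in $\Gamma(G_j,S_j)$ and by Lemma~\ref{s1}(2) in $\Gamma(G_1,S_1)$ sit inside $\Gamma(G_m,S_m)$ while still avoiding the relevant ball, track that the left-translation by $v_i$ sends the avoidance of $B(e,2r)$ to the avoidance of $B(e,r)$, and check that the worst case $j=m$ correctly drives the exponent $r^{m-1}$ in the final bound.
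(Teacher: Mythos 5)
Your proposal is correct and follows essentially the same approach as the paper's proof: reducing to connecting an arbitrary point $x\in S(e,r)$ to one of $a_0^{\pm r}$ outside $B(e,r/2)$, then building the comb with teeth labeled $a_0^{4r}$ along a geodesic from $e$ to $x$, and bounding the tip-to-tip jumps via Lemma~\ref{s1}(2) and Lemma~\ref{ag}. The paper traverses the comb in the opposite direction ($a_0^r\to a_0^{4r}\to\cdots\to x$) and records the prefixes $s_1\cdots s_i$ rather than vertices $v_i$, but the decomposition, the use of Lemma~\ref{basic} to choose a non-penetrating $a_0$-ray, the ball-avoidance bookkeeping via left-translation, and the final constant-absorption are all the same.
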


\begin{proof}
We can assume that $r$ is an integer. Let $C_3$ be the constant in Lemma~\ref{s1}. Then there is a path which connects $a_0^{-r}$ and $a_0^{r}$, lies outside the open ball $B(e,r)$, and has length at most $C_3r+C_3$. It suffices to show there is a constant $B_m$ depending only on $m$ such that for each point $x$ on the sphere $S(e,r)$ we can either connect $x$ to $a_0^r$ or connect $x$ to $a_0^{-r}$ by a path outside the open ball $B(e,r/2)$ with the length at most $B_m r^{m-1}\bigl(f(B_mr)+1\bigr)$. Now the proposition follows choosing a suitable constant $A_m \geq 2B_m + 2C_3$. 

Now we establish the $B_m r^{m-1}\bigl(f(B_mr)+1\bigr)$ bound. Let $\alpha$ be a bi-infinite geodesic which contains $x$ and has edges labeled by $a_0$. Then $\alpha$ is the union of two rays $\alpha_1$ and $\alpha_2$ that share the initial point $x$. Assume that $\alpha_1$ traces each edge of $\alpha$ in the positive direction and $\alpha_2$ traces each edge of $\alpha$ in the negative direction. By Lemma~\ref{basic}, either $\alpha_1$ or $\alpha_2$ (say $\alpha_1$) lies outside the open ball $B(e,r/2)$. We now construct a path $\eta$ which connects $x$ and $a_0^r$, lies outside the open ball $B(e,r/2)$, and has the length at most $B_m r^{m-1}\bigl(f(B_mr)+1\bigr)$. We note that we can use an analogous argument to construct a similar path connecting $x$ and $a_0^{-r}$ in the case $\alpha_2$ lies outside the open ball $B(e,r/2)$.

\noindent
{\em Constructing the comb and connecting endpoints of successive teeth.} First, we connect $a_0^r$ and $a_0^{4r}$ by the geodesic $\eta_0$ labeled by $a_0^{3r}$. Then $\eta_0$ lies outside the open ball $B(e,r)$ and has length exactly $3r$. Since $|x|_{S_m}=r$, we can write $x=s_1s_2\cdots s_{r-1}s_r$ where $s_i\in S_m\cup S_m^{-1}$. Let $R_m$ be a constant which is greater than four times of the constant $C_3$ in Lemma~\ref{s1} and greater than $4^m$ times of each constant $M_i$ for $2\leq i \leq m$ in Lemma~\ref{ag}. By Lemma~\ref{ll1}, Lemma~\ref{s1}, and Lemma~\ref{ag} we can connect $a_0^{4r}$ and $s_1a_0^{4r}$ by a path $\eta_1$ which lies outside the open ball $B(e,2r)$ and has length at most $R_m r^{m-2}\bigl(f(R_mr)+1\bigr)$.

Similarly for $2\leq i\leq r$ we can also connect $(s_1s_2\cdots s_{i-1})a_0^{4r}$ and $(s_1s_2\cdots s_{i})a_0^{4r}$ by a path $\eta_i$ which lies outside the open ball $B(s_1s_2\cdots s_{i-1},2r)$ and has length at most $R_m r^{m-2}\bigl(f(R_mr)+1\bigr)$. Finally, we connect $xa_0^{4r}$ and $x$ using the subsegment $\eta_{r+1}$ of $\alpha_1$. Then $\eta_{r+1}$ lies outside the open ball $B(e,r/2)$ and has length exactly $4r$. 

Let $\eta=\eta_0\cup\eta_1\cup\eta_2\cup\cdots\cup \eta_r\cup \eta_{r+1}$. Then $\eta$ is a path which connects $a_0^r$ and $x$, lies outside the open ball $B(e,r/2)$, and has length at most $R_m r^{m-1}\bigl(f(R_mr)+1\bigr)+7r$. Since $f$ is a non-decreasing function, the length of $\eta$ is bounded above by $B_m r^{m-1}\bigl(f(B_mr)+1\bigr)$ by some appropriate choice of $B_m \geq R_m + 7$.
\end{proof}

The following proposition gives the quadratic upper bound for the divergence of the group $G_2$. The proof of this proposition proceeds exactly as the proof of Proposition~\ref{upperkey} and we leave it to the reader. The main difference is that we only need the upper bounds on lengths on paths avoiding $B(e, 2r)$ which connect $a_0^{4r}$ with $a_2a_0^{4r}$ from statement $(P_2)$ in Lemma~\ref{ag}. This bounds path lengths by $M_2f(M_2r) + M_2$, and since $f(r)$ is sub-linear, we can replace this bound by a linear function of $r$.  The other length estimates are on paths avoiding $B(e,2r)$ which connect $a_0^{4r}$ with $sa_0^{4r}$ for $s$ a generator of $G_1$ and these are also linear. 

\begin{prop}
\label{upperkey2}
Let $\{\delta_\rho\}$ be the divergence of the Cayley graph $\Gamma(G_2,S_2)$. Then there is a constant $A$ such that for each $\rho \in (0,1/2]$ we have $$\delta_\rho(r)\leq A r^2 +Ar \text{ for each $r$ sufficiently large}.$$ In particular, $\delta_\rho(r) \preceq r^2\text{ for each } \rho \in (0,1/2]$,
\end{prop}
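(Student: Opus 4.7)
The plan is to follow the comb-construction strategy of Proposition~\ref{upperkey} verbatim, with only one accounting change at the end. As in that proof, we reduce to showing that each point $x\in S(e,r)$ can be joined to $a_0^{r}$ or $a_0^{-r}$ by a path avoiding $B(e,r/2)$ of length bounded above by a quadratic function of $r$; combining this with the linear bound of Lemma~\ref{s1}(1) for joining $a_0^{-r}$ to $a_0^r$ outside $B(e,r)$ then yields the proposition after absorbing constants.

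To construct the path from $x$ to $a_0^r$ (assuming without loss of generality that the positive $a_0$-ray through $x$ avoids $B(e,r/2)$, as guaranteed by Lemma~\ref{basic} applied to the bi-infinite $a_0$-labeled geodesic through $x$), I will first connect $a_0^r$ to $a_0^{4r}$ by a subpath of length $3r$ lying outside $B(e,r)$, and then write $x=s_1s_2\cdots s_r$ as a geodesic word in $S_2\cup S_2^{-1}$. For each consecutive pair I will join $(s_1\cdots s_{i-1})a_0^{4r}$ to $(s_1\cdots s_i)a_0^{4r}$ by a translate of a model path based at $e$ that goes from $a_0^{4r}$ to $s_i a_0^{4r}$ and avoids $B(e,2r)$. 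Finally a subsegment of the positive $a_0$-ray through $x$ of length $4r$ joins $xa_0^{4r}$ back to $x$ while remaining outside $B(e,r/2)$.

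The key observation, which distinguishes this argument from the $m\geq 3$ case, is that every tooth-connecting subpath has length bounded by a \emph{linear} function of $r$. Indeed, if $s_i\in S_1\cup S_1^{-1}$, Lemma~\ref{s1}(2) provides a path of length at most $C_3\cdot 2r+C_3$ (applied with $2r$ in place of $n$). If $s_i=a_2^{\pm 1}$, statement $(P_2)$ of Lemma~\ref{ag} provides a path of length at most $M_2 f(M_2\cdot 2r)+M_2$; since the representative $f$ of the extrinsic length function was chosen to satisfy $f(x)\leq x$ for all $x>0$, this is bounded by $2M_2^2\, r + M_2$, again linear. So every one of the at most $r$ tooth-connecting paths has length $\leq Kr+K$ for a constant $K$ depending only on $C_3$ and $M_2$.

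Summing contributions, the total length of the resulting path from $x$ to $a_0^r$ is at most $3r + r(Kr+K) + 4r$, which is $Ar^2+Ar$ for a suitable constant $A$. Since $\rho\leq 1/2$ means that $B(e,\rho r)\subseteq B(e,r/2)$, the same path witnesses the bound $\delta_\rho(r)\leq A r^2+Ar$ for $r$ sufficiently large. The only real step needing attention is verifying that the sublinear bound $f(x)\leq x$ is actually being applied, since without it the same calculation would yield $rf(r)$ rather than $r^2$; once that substitution is made, the argument is entirely parallel to Proposition~\ref{upperkey} and no further subtleties arise.
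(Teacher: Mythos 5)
Your proposal is correct and follows exactly the approach that the paper sketches: run the comb construction of Proposition~\ref{upperkey} with $m=2$, split the tooth-connecting estimates into the $s_i\in S_1\cup S_1^{-1}$ case (Lemma~\ref{s1}(2), linear) and the $s_i=a_2^{\pm 1}$ case (statement $(P_2)$ of Lemma~\ref{ag}), and then use the normalization $f(x)\leq x$ to turn $M_2 f(2M_2 r)+M_2$ into a linear bound, so that $r$ teeth each of linear length yield a quadratic total. You correctly identify this sublinearity of $f$ as the one point that has to be added beyond a verbatim rerun of the $m\geq 3$ argument, which is precisely the observation the paper highlights when it leaves the details to the reader.
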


We end this subsection by proving each group $G_m$ in the Main Theorem is one-ended.

\begin{prop}
\label{one-ended}
Let $m\geq 1$ be an integer. Then the group $G_m$ is one-ended.
\end{prop}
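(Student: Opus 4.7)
I would proceed by induction on $m$.

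For the base case $m=1$, the group $G_1 = (H \ast_{\langle c \rangle} {\mathbb Z}^2) \times {\mathbb Z}$ is a direct product of two infinite finitely generated groups, since the first factor contains ${\mathbb Z}^2$. One-endedness of such a direct product is classical, and in fact is already realized concretely by the construction in Lemma~\ref{l1}: the linear-length concatenation produced there, which avoids the $n$-ball centered at the identity, is constructed for every pair of points on the $n$-sphere in $\Gamma(G_1, S_1)$, not merely a supremum over connectable pairs. After a standard rescaling (see below) this shows that the complement of any ball in $\Gamma(G_1, S_1)$ has a unique unbounded path-component.

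For the inductive step with $m \geq 2$, the key observation is that the proofs of Proposition~\ref{upperkey} (when $m \geq 3$) and Proposition~\ref{upperkey2} (when $m = 2$) in fact produce, for \emph{every} point $x$ on the sphere $S(e, r)$ in $\Gamma(G_m, S_m)$, an explicit path connecting $x$ to $a_0^{r}$ or $a_0^{-r}$ which lies outside the open ball $B(e, r/2)$. Combined with Lemma~\ref{s1}, which connects $a_0^{-r}$ to $a_0^{r}$ outside $B(e, r)$, this shows that any two points of $S(e, r)$ lie in a common path-component of $\Gamma(G_m, S_m) \setminus B(e, r/2)$.

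To conclude one-endedness of $G_m$ from this, I would rescale. Given any $r \geq 1$ and two points $x_1, x_2$ lying outside $B(e, 2r)$, pull back geodesics from $e$ to each $x_i$ to pick points $y_i \in S(e, 2r)$, with the subsegments $[y_i, x_i]$ staying at distance at least $2r$ from $e$ and hence outside $B(e, r)$. Applying the connectedness statement of the previous paragraph at radius $2r$ joins $y_1$ to $y_2$ by a path outside $B(e, r)$. Concatenating yields a path from $x_1$ to $x_2$ outside $B(e, r)$, proving that $\Gamma(G_m, S_m) \setminus B(e, r)$ has a unique unbounded path-component and therefore that $G_m$ has one end.

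The main obstacle is a bookkeeping issue: I must verify that the proofs of the upper-bound propositions truly construct a path from \emph{every} sphere point, not merely bound the divergence over pairs that happen to be connectable in the complement. A direct inspection of those arguments, where an arbitrary $x \in S(e, r)$ is factored as $s_1 s_2 \cdots s_r$ and connected to $a_0^r$ through a ``comb'' built from Lemma~\ref{s1} and Lemma~\ref{ag}, confirms this.
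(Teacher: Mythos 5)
Your proposal is correct and follows essentially the same approach as the paper: treat $G_1$ via the direct-product structure, and for $m\geq 2$ use the sphere-connectivity established in the proofs of Propositions~\ref{upperkey} and~\ref{upperkey2} (connecting any $x\in S(e,r)$ to $a_0^{\pm r}$ outside $B(e,r/2)$, then joining $a_0^r$ to $a_0^{-r}$), combined with the standard retraction along geodesics from $e$. Your writeup is slightly more explicit than the paper's about the rescaling step and about which lemma joins $a_0^r$ to $a_0^{-r}$, but the underlying argument is identical.
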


\begin{proof}


$G_1$ is one-ended because it is a direct product of two infinite groups.
For $m \geq 2$ note that (by retracting along geodesic rays based at $e$)  it is sufficient to prove that any two points on the sphere $S(e,r)$ can be connected by a path avoiding $B(e,r/2)$ for large integers $r$. In the proof of Proposition~\ref{upperkey} and Proposition~\ref{upperkey2} we construct paths which avoid $B(e,r/2)$ and which connect an arbitrary $x\in S(e,r)$ to one of $a_0^r$
or $a_0^{-r}$, and we can connect $a_0^r$ to $a_0^{-r}$ by a path outside of $B(e,r/2)$ in the $a_0a_1$-plane. Therefore, the group $G_m$ is one-ended.

\end{proof}

\subsection{The lower bound}

\subsubsection{Lower bound strategy}
In this section, we prove the lower bound for the group divergence of $G_m$. In order to estimate divergence, we give lower bound estimates for the lengths of open $r$--ball avoidant paths connecting two points on the $r$--sphere. These lower bound estimates are obtained by cutting these paths into pieces using $a_m$--hyperplanes (or $a_m$--corridors) and estimating the lengths of these pieces. We use a $2$--dimensional geometric model $X_m$ for $G_m$ to introduce the hyperplanes and to make the lower bound estimates.

The lower bound for the divergence of $G_m$ is obtained by considering the divergence of the corner $(\alpha,\beta)_e$ based at the identity $e$ in $X_m$, where the geodesic ray $\alpha$ has edges labeled by $a_0$ and the geodesic ray $\beta$ has edges labeled by $a_m$. Proposition~\ref{p3p3} shows that the length of an open $r$--ball avoidant path connecting $\alpha(r)$ to $\beta(r)$ in $X_m$ has length which dominates the function $r^{m-1}f(r)$. 
The main part of the proof of Proposition~\ref{p3p3} is done by induction on the degree of the polynomial portion of the estimate $r^{m-1}f(r)$. The induction step is treated in detail (and in more generality) in Proposition~\ref{p1p1}. In order to prove Proposition~\ref{p1p1} we will need to introduce the notions of a $(0,k)$--ray and of a $k$--corner. These are defined below. Some of these definitions (hyperplane, $k$--corner, and $r$--avoidant path over a $k$--corner) are modeled on the work of Macura~\cite{MR3032700}.

\subsubsection{Lower bound details}
 Let $Y_1$ be the standard presentation $2$--complex for the group $G_1$ with the finite generating set $S_1$. For each $m\geq 2$ we construct a presentation $2$--complex for the group $G_m$ by induction on $m$. Let $C_m$ be the Euclidean unit square with the torus orientation. We label two opposite directed edges by $a_m$ and identify them to obtain a cylinder $U_m$. The remaining two edges of $C_m$ map to loops in $U_m$, and we label them $a_{m-1}$ and $a_0$ respectively. We glue the cylinder $U_m$ to $Y_{m-1}$ by the identification map which is the orientation preserving isometry prescribed by the labeling of the edges. Then the resulting complex $Y_m$ is a graph of spaces with one vertex space $Y_{m-1}$ and one edge space $S_1$. 

Let $X_m=\widetilde{Y}_m$ be the universal cover of $Y_m$. We consider the $1$--skeleton of $X_m$ as the Cayley graph $\Gamma(G_m,S_m)$. For $1\leq i \leq m-1$ the preimage of $Y_i$ in $X_m=\widetilde{Y}_m$ consists of infinitely many disjoint isometrically embedded copies of $X_i=\widetilde{Y}_i$. For each $m\geq 2$ we let $s_m$ be the line segment in $C_m$ connecting the midpoints of the opposite edges labeled $a_m$. We also denote by $s_m$ the image of $s_m$ in $U_m$, as well as its image in $Y_m$ after the gluing. Each component $H$ of the preimage of $s_m\subset Y_m$ in $X_m=\widetilde{Y}_m$ is isometric to the real line and separates $X_m$. Therefore, we call $H$ a \emph{hyperplane} in $X_m$. Moreover, each hyperplane $H$ is contained in a single component $ostar(H)$ of the preimage of $Int(U_m)\subset Y_m$ in $X_m=\widetilde{Y}_m$. We note that $U_m$ is a cylinder. Therefore, the closure $star(H)$ of $ostar(H)$ is isometric to a flat strip.

The following definitions will be used many times in the proof of the lower bound of our group divergence.

\begin{defn}[$k$--rays and $(0,k)$--rays]
Let $m\geq 1$ be an integer. A geodesic ray $\alpha$ in the complex $X_m$ is a \emph{$k$--ray} for $0\leq k\leq m$ if all edges of $\alpha$ are labeled by $a_k$. A geodesic ray $\beta$ in the complex $X_m$ is a \emph{$(0,k)$--ray} for $2\leq k\leq m$ if one of the following holds:
\begin{enumerate}
    \item $\beta$ is a $0$--ray;
    \item $\beta$ is a concatenation of $\sigma_1 \sigma_2$, where $\sigma_1$ is a non-degenerate segment with edges labeled by $a_0$ and $\sigma_2$ is a $k$--ray.
\end{enumerate}
Note that we do not assume that edges of each $k$--ray or edges of each $(0,k)$--ray are oriented away from the base point.


\end{defn}

\begin{rem}
Note that we only define $(0,k)$--ray for $k\geq 2$. The notion of a $(0,1)$--ray does not make sense, because 
$$
|a_0^ra_1^{-r}|_{S_m} \; \leq \; f(r) 
$$
and $f(r)$ may be smaller than $2r$. In particular, the path labeled by $a_0^ra_1^{-r}$ is not a geodesic. Therefore, the definition of $1$--corner given below is more restricted than the definition of $k$--corner for $k \geq 2$. 
\end{rem}

\begin{defn}[$1$--corner]
\label{corner1}
Let $\alpha$ be a $0$--ray and let $\beta$ be a $1$--ray in the complex $X_m$ such that they share the same initial point $x$. Then $(\alpha,\beta)_x$ is called a \emph{$1$--corner} at $x$. 

\end{defn}

We define $k$--corners for $k\geq 2$ as follows.

\begin{defn}[$k$--corner for $k\geq 2$]
\label{corner2}
Let $2\leq k \leq m$ be integers. Let $\alpha$ be a $(0,k)$--ray and let $\beta$ be a $k$--ray in the complex $X_m$ such that they share the same initial point $x$. Then $(\alpha,\beta)_x$ is called a \emph{$k$--corner} at $x$. 

\end{defn}

The following definition recalls Macura's notion of detour paths over corners in \cite{MR3032700}. 

\begin{defn}[An $r$--avoidant path over $k$--corner]
Let $1\leq k\leq m$ be integers and let $(\alpha,\beta)_x$ be a $k$--corner in $X_m$. For each $r>0$ the path outside the open ball $B(x,r)$ in $X_m$ connecting a vertex of $\alpha$ to a vertex of $\beta$ is called an \emph{$r$--avoidant path over the $k$--corner $(\alpha,\beta)_x$} in $X_m$.
\end{defn}

The following lemma and its corollary study some basic facts about $k$--corners for $k\geq 2$ and avoidant paths over them.
\begin{lem}
\label{coolhyp}
Let $k\geq 2$ be an integer. Let $\alpha$ be a $(0,k)$--ray and let $\beta$ be a $k$--ray in the complex $X_k$ such that they share the same initial point $x$. Assume that there is a hyperplane in $X_k$ that is dual to edges labeled by $a_k$ in $\alpha$ and $\beta$. Then $H$ only intersects the first edge of $\beta$.
\end{lem}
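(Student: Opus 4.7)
The plan is to translate the conclusion into a statement about right cosets of $\langle a_0 \rangle$ and to exploit the HNN structure $G_k = \langle G_{k-1}, a_k \mid a_k^{-1} a_0 a_k = a_{k-1} \rangle$. The key reformulation is that two $a_k$-edges $[v, va_k]$ and $[w, wa_k]$ of $X_k$ are dual to the same hyperplane $H$ if and only if they lie in a common strip $star(H)$, which happens precisely when the ``bottom'' vertices $v$ and $w$ (those lying on the $a_0$-boundary line of the strip) satisfy $v \langle a_0 \rangle = w \langle a_0 \rangle$; this unpacks the description of $star(H)$ as the universal cover of $U_k$ whose $a_0$-boundary loop lifts to the $a_0$-line.

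First I would show that the bottom vertices of distinct $a_k$-edges of $\beta$ lie in distinct right cosets of $\langle a_0 \rangle$, so $H$ is dual to at most one $a_k$-edge of $\beta$, the $n$-th say. This follows from the retraction $\phi_k \colon G_k \to \mathbb{Z}$ sending $a_k \mapsto 1$ and all other generators to $0$: the value of $\phi_k$ depends only on the right coset of $\langle a_0 \rangle$, while its values on the bottom vertices of successive $a_k$-edges of $\beta$ differ by $\pm 1$. Next, write $\alpha = \sigma_1 \sigma_2$ with $\sigma_1 = [x, xa_0^p]$ for some integer $p \neq 0$ (the pure $0$-ray case is vacuous since then $\alpha$ contains no $a_k$-edges), and $\sigma_2$ a $k$-ray from $xa_0^p$ with orientation $\delta \in \{\pm 1\}$; let $\epsilon \in \{\pm 1\}$ be the orientation of $\beta$ and assume $H$ is dual to the $j$-th $a_k$-edge of $\sigma_2$. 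Applying $\phi_k$ to the two bottom vertices forces $\epsilon = \delta$ and $j = n$.

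In the case $\epsilon = \delta = +1$, coset agreement between the bottom vertices $xa_k^{n-1}$ and $xa_0^p a_k^{n-1}$ reduces to $a_k^{-(n-1)} a_0^p a_k^{n-1} \in \langle a_0 \rangle$; this holds trivially for $n = 1$, but for $n \geq 2$ the relation $a_k^{-1} a_0^p a_k = a_{k-1}^p$ together with Britton's lemma (yielding $\langle a_0 \rangle \cap \langle a_{k-1} \rangle = \{e\}$ in $G_{k-1}$, from the $\mathbb{Z}^2$ structure when $k = 2$ and inductively from the HNN extension defining $G_{k-1}$ when $k \geq 3$) excludes it, and further conjugations by $a_k^{-1}$ leave $G_{k-1}$ altogether. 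In the case $\epsilon = \delta = -1$, the analogous condition $a_k^n a_0^p a_k^{-n} \in \langle a_0 \rangle$ fails for every $n \geq 1$ because $a_k a_0^p a_k^{-1}$ is already a reduced HNN form (as $a_0^p \notin \langle a_{k-1} \rangle$ for $p \neq 0$), so this case is vacuous. Therefore $n = 1$, and $H$ meets only the first edge of $\beta$. The main obstacle I anticipate is the Britton's lemma bookkeeping ruling out $n \geq 2$ and the $\epsilon = \delta = -1$ subcase; the remainder is a direct Bass--Serre computation.
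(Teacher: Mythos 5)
Your proof is correct, and it takes a genuinely different route from the paper. The paper argues geometrically: it uses the fact that hyperplanes are disjoint separating lines to show first that the smallest index $i$ with $H_i$ meeting $\alpha$ must be $1$, then that if any $H_j$ with $j\geq 2$ met $\alpha$ then so would $H_2$, and finally constructs an explicit loop labeled $a_{k-1}^n a_k^p a_0^m$ (from pieces of $\alpha$, $\beta$, and the boundary lines of $star(H_1)$ and $star(H_2)$) and rules it out with Lemma~\ref{lcool2}. You instead translate ``dual to the same hyperplane'' into the right-coset condition $v\langle a_0\rangle = w\langle a_0\rangle$ on the $a_0$-side endpoints, use the retraction $\phi_k\colon G_k\to\mathbb{Z}$ to synchronize indices and orientations, and then finish with a Britton's-lemma normal form analysis in the HNN extension $G_k = \langle G_{k-1}, a_k \mid a_k^{-1}a_0a_k = a_{k-1}\rangle$. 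Both arguments need the algebraic input $\langle a_0\rangle\cap\langle a_{k-1}\rangle = \{e\}$ (which Lemma~\ref{lcool2} supplies directly, so you could cite it in place of your inductive derivation). Your approach is more systematic and uncovers an extra fact the paper's argument does not make explicit: when both rays are oriented in the negative $a_k$-direction, no hyperplane can be dual to $a_k$-edges of both $\alpha$ and $\beta$ at all, so the hypothesis of the lemma is simply never met in that subcase. The paper's approach, by contrast, front-loads the soft separation argument and only needs algebra in the one step that actually requires it; it is shorter but the loop construction is a bit more ad hoc than your coset reformulation.
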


\begin{proof}
Let $i$ be the smallest positive integer such that the hyperplane $H_i$ dual to the $i^{th}$ edge of $\beta$ intersects $\alpha$. We observe that if $i \geq 2$, the hyperplane $H_{i-1}$ dual to the $(i-1)^{th}$ edge of $\beta$ must intersect $H_i$ which is a contradiction. Therefore, $i$ must be equal to $1$. Assume that some hyperplane $H_j$ dual to the $j^{th}$ edge of $\beta$ for $j\geq 2$ intersects $\alpha$. Then the hyperplane $H_2$ dual to the second edge of $\beta$ must intersect $\alpha$. We note that for $i=1, 2$ each $star(H_i)-ostar(H_i)$ consists of two bi-infinite geodesics: one has edges labeled by $a_0$ and the other has edges labeled by $a_{k-1}$. Therefore, we have a loop based at the endpoint other than $x$ of the first edge of $\beta$ which is labeled by $a_{k-1}^na_k^pa_0^m$ for $m\neq 0$ and $n\neq 0$. This contradicts to Lemma~\ref{lcool2} and therefore the lemma is proved.  
\end{proof}

\begin{cor}
\label{cocococo}
Let $2\leq k\leq m$ be an integers. Let $\gamma$ be an $r$--avoidant path over a $k$--corner $(\alpha,\beta)_x$ in $X_m$. Then the length of $\gamma$ is at least $r-1$ (therefore, at least $f(r)-1)$.
\end{cor}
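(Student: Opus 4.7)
My plan is to show that the length of $\gamma$ is at least $r-1$ by counting how many hyperplanes in the isometrically embedded copy of $X_k$ containing $\alpha\cup\beta$ any path from $u\in\alpha$ to $v\in\beta$ must cross.

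First, let $u$ and $v$ denote the endpoints of $\gamma$ on $\alpha$ and $\beta$ respectively. Because $\gamma$ lies outside the open ball $B(x,r)$ and $\beta$ is a geodesic $k$-ray with unit-length edges, we can write $v=\beta(j)$ for some integer $j\geq r$. Both rays $\alpha$ and $\beta$ lie in a single isometrically embedded copy $X_k^{0}$ of $X_k$ inside $X_m$, obtained via the recursive construction of $Y_m$ together with the isometric embedding of Lemma~\ref{ll1}. Within $X_k^{0}$, I consider the hyperplanes $H_2, H_3, \ldots, H_j$ dual to the second through $j$-th edges of $\beta$.

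Applying Lemma~\ref{coolhyp} to each $H_i$ with $i\geq 2$: since $H_i$ meets $\beta$ at its $i$-th edge rather than its first edge, the lemma implies $H_i$ is not dual to any $a_k$-edge of $\alpha$, so $H_i$ does not meet $\alpha$. Because $H_i$ separates $X_k^{0}$ into two components, with $\alpha$ (hence $u$) lying on the side containing $\beta(0),\ldots,\beta(i-1)$ and $v=\beta(j)$ lying on the opposite side, any path in $X_k^{0}$ from $u$ to $v$ must cross $H_i$. Using that distinct hyperplanes have disjoint sets of dual edges, any such path contains at least $j-1\geq r-1$ distinct edges, whence $d_{X_k^{0}}(u,v)\geq r-1$.

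By the isometric embedding $X_k^{0}\hookrightarrow X_m$ provided by Lemma~\ref{ll1}, this yields $d_{X_m}(u,v)\geq r-1$, and since $\gamma$ is a path from $u$ to $v$ in $X_m$, its length is at least $r-1$. The bound involving $f(r)$ follows immediately since $f(r)\leq r$. The main subtlety to watch is that hyperplanes of $X_k$ separate $X_k$ but not the ambient complex $X_m$; the isometric embedding lets us bound $d_{X_m}(u,v)$ via the intrinsic hyperplane-counting argument inside $X_k^{0}$ rather than trying to count hyperplane crossings of $\gamma$ directly in the larger complex.
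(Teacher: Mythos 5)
Your proof is correct and follows essentially the same route as the paper: both reduce the claim to the distance bound $d(u,v)\geq r-1$ via the isometric embedding from Lemma~\ref{ll1} and obtain that bound from Lemma~\ref{coolhyp}. You simply spell out the hyperplane-separation counting step that the paper compresses into ``a direct result of Lemma~\ref{coolhyp},'' which is a reasonable expansion and not a different method.
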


\begin{proof}
We assume that $\alpha$ is a $(0,k)$--ray, $\beta$ is a $k$--ray and $\gamma$ connects a vertex $u$ in $\alpha$ to a vertex $v$ in $\beta$. It suffices to prove that $d_{S_m}(u,v)\geq r-1$. By Lemma~\ref{ll1}, we have $d_{S_m}(u,v)=d_{S_k}(u,v)$. Therefore, we only need to show $d_{S_k}(u,v)\geq r-1$ and this inequality is a direct result of Lemma~\ref{coolhyp}.
\end{proof}

The following lemma provides a technique to modify an avoidant path over a $k$--corner to obtain another avoidant path over the same $k$--corner with the length bounded above by the length of the original avoidant path multiplied by a fixed constant. This technique will be used in the proof of Proposition~\ref{p1p1}. 

\begin{lem}
\label{avoidantlem}
Let $m$ be a positive integer. Let $\alpha$ be a geodesic segment labeled by $a_0$ with endpoints $x$ and $y$. Let $z$ be a vertex in the complex $X_m$ and assume that $r=\min\{d_{S_m}(x,z),d_{S_m}(y,z)\}$ is positive. Then we can connect $x$ and $y$ by a path $\beta$ with edges labeled by $a_0$ and $b$ such that $\beta$ lies outside the open ball $B(z,r/2)$ and $\ell(\beta)\leq 11 \ell (\alpha)$.
\end{lem}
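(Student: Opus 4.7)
The plan is to exploit the fact that $b$ generates the central $\mathbb{Z}$-factor of $G_1$, so $b$ commutes with $a_0$ in every $G_m$. This allows a U-shaped detour around $z$ in the ``$b$-direction,'' using only $a_0$ and $b$ edges. The key auxiliary tool is the retraction $\pi_b\!:G_m \to \mathbb{Z}$ sending $b \mapsto 1$ and every other element of $S_m$ to $0$; a direct check of the defining relations of $G_m$ shows $\pi_b$ is a well-defined homomorphism, and since each generator of $S_m$ maps to $0$ or $\pm 1$, $\pi_b$ is $1$-Lipschitz on $\Gamma(G_m,S_m)$.

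First I would dispose of the trivial cases. If $\ell(\alpha)=0$ take $\beta=\alpha$. If $r\geq 2\ell(\alpha)$, then every vertex $p$ of $\alpha$ satisfies $d_{S_m}(p,z)\geq d_{S_m}(x,z)-d_{S_m}(x,p)\geq r-\ell(\alpha)\geq r/2$, so $\alpha$ itself already avoids $B(z,r/2)$ and again we may take $\beta=\alpha$.

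For the main case $0<r<2\ell(\alpha)$, let $n=\ell(\alpha)$ so that $y=xa_0^{\pm n}$, set $t=\pi_b(z)-\pi_b(x)$, choose $\epsilon\in\{+1,-1\}$ opposite in sign to $t$ (taking $\epsilon=+1$ if $t=0$), and put $k=\lceil r/2\rceil$. The path $\beta$ would be the concatenation of (i) the $b^{\epsilon k}$-path from $x$ to $xb^{\epsilon k}$, (ii) the $a_0^{\pm n}$-path from $xb^{\epsilon k}$ to $yb^{\epsilon k}$, which is a valid edge path because $a_0$ and $b$ commute, and (iii) the $b^{-\epsilon k}$-path from $yb^{\epsilon k}$ back to $y$. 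Its length is $2k+n\leq r+2+n\leq 3n+2\leq 11n$, using $r<2n$ and $n\geq 1$.

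To verify avoidance, I would check the vertices of each of the three segments. On the horizontal middle segment, $\pi_b$ of each vertex differs from $\pi_b(z)$ by $\epsilon k - t$, whose absolute value equals $k+|t|\geq k\geq r/2$ by the sign choice of $\epsilon$. On each vertical segment, for a vertex $p$ at distance $j$ from the nearest endpoint in $\{x,y\}$, the triangle inequality gives $d_{S_m}(p,z)\geq r-j$, while the $\pi_b$-retraction combined with the sign choice of $\epsilon$ gives $d_{S_m}(p,z)\geq |\epsilon j - t|=j+|t|\geq j$; since these two lower bounds sum to at least $r$, their maximum is $\geq r/2$. Nothing in the argument is especially delicate; the only real subtlety is the sign choice of $\epsilon$, which ensures the detour is pushed away from $z$ in the $b$-direction.
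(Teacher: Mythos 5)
Your proof is correct, and it takes a genuinely different route from the paper's. The paper handles the nontrivial case (when $\alpha$ meets $B(z,r/2)$) by fixing a vertex $u \in \alpha \cap B(z,r/2)$, \emph{extending} $\alpha$ by $2r$ on each side along the bi-infinite $a_0$--geodesic to points $x_1, y_1$ (using Lemma~\ref{basic} to get these extensions outside $B(z,r/2)$), then shifting the extended segment by $b^r$; avoidance of the shifted middle segment is argued via the intermediary point $u$ (each shifted vertex is at distance $\geq r$ from $u$, and $d(u,z)<r/2$), and avoidance of the vertical legs is argued from $d(x_1,z), d(y_1,z) \geq 3r/2$. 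Your proof dispenses with the extension entirely: you shift $\alpha$ directly by $b^{\epsilon k}$ with $k = \lceil r/2 \rceil$ and the sign $\epsilon$ chosen opposite to $\pi_b(z)-\pi_b(x)$, and you verify avoidance uniformly via the $1$--Lipschitz retraction $\pi_b$ (for the horizontal piece) combined with the triangle inequality (for the vertical legs, where the two lower bounds $r-j$ and $j$ sum to at least $r$). The sign choice of $\epsilon$ is the key idea that replaces the paper's use of the reference point $u$ and of Lemma~\ref{basic}. Your approach is shorter, avoids the ball--quasi-convexity lemma, and in fact produces a tighter length bound ($\leq 3\ell(\alpha)+2$ rather than $\ell(\alpha)+10r$), though both meet the stated $11\ell(\alpha)$. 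The only trade-off is that you explicitly introduce and verify the retraction $\pi_b\!: G_m \to \mathbb{Z}$, which the paper uses only implicitly.
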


\begin{proof}

If $\alpha$ lies outside the open ball $B(z,r/2)$, then we let $\beta=\alpha$. We now assume that $\alpha$ has non-empty intersection with $B(z,r/2)$. Let $u$ be a vertex in $\alpha\cap B(z,r/2)$. We note that the endpoints $x$ and $y$ of $\alpha$ lie outside the open ball $B(z,r)$. Therefore, 
\begin{align*}
    \ell(\alpha)&=d_{S_m}(x,y)=d_{S_m}(x,u)+d_{S_m}(u,y)\\&\geq \bigl(d_{S_m}(x,z)-d_{S_m}(u,z)\bigr)+\bigl(d_{S_m}(y,z)-d_{S_m}(u,z)\bigr)\\&\geq (r-r/2)+(r-r/2)\geq r.
\end{align*}

Since $\alpha$ is a geodesic segment with edges labeled by $a_0$, it is a subsegment of a bi-infinite geodesic $\gamma$ with edges also labeled by $a_0$. By Lemma~\ref{basic} we can choose two vertices $x_1$ and $y_1$ on $\gamma$ such that the following hold:
\begin{enumerate}
    \item The point $x$ lies between two points $u$ and $x_1$ on $\gamma$. The subsegment $\beta_1$ of $\gamma$ connecting $x$ and $x_1$ lies outside the open ball $B(z,r/2)$ and its length is exactly $2r$.
    \item The point $y$ lies between two points $u$ and $y_1$ on $\gamma$. The subsegment $\beta_2$ of $\gamma$ connecting $y$ and $y_1$ lies outside the open ball $B(z,r/2)$ and its length is exactly $2r$.
\end{enumerate}
Since $x$ lies between $u$ and $x_1$ on the bi-infinite geodesic $\gamma$, we have $d_{S_m}(u,x_1)\geq d_{S_m}(x,x_1)=2r$. Therefore, $d_{S_m}(z,x_1)\geq d_{S_m}(u,x_1)-d_{S_m}(u,z)\geq 3r/2$. Similarly, we also have $d_{S_m}(z,y_1)\geq 3r/2$.

Let $x_2=x_1b^r$ and $y_2=y_1b^r$. Let $\beta_3$ be the geodesic connecting $x_1$ and $x_2$ with edges labeled by $b$. Similarly, let $\beta_4$ be the geodesic connecting $y_1$ and $y_2$ with edges labeled by $b$. Then the lengths of both geodesics $\beta_3$ and $\beta_4$ are exactly $r$. Since $d_{S_m}(x_1,z)\geq 3r/2$ and $d_{S_m}(y_1,z)\geq 3r/2$, both geodesics $\beta_3$ and $\beta_4$ lies outside the open ball $B(z,r/2)$.

Since $b$ commutes with $a_0$, we can connect $x_2$ and $y_2$ by a geodesic $\beta_5$ with edges labeled by $a_0$ and $\ell(\beta_5)=d_{S_m}(x_2,y_2)=d_{S_m}(x_1,y_1)$. This implies that $$\ell(\beta_5)=d_{S_m}(x_1,y_1)=d_{S_m}(x_1,x)+d_{S_m}(x,y)+d_{S_m}(y,y_1)=2r+\ell(\alpha)+2r=\ell(\alpha)+4r.$$ We now claim that $\beta_5$ lies outside the open ball $B(z,r/2)$. By the construction, we observe that each point $v$ in $\beta_5$ has distance at least $r$ from $u$. Since $d_{S_m}(u,z)<r/2$, we have $$d_{S_m}(v,z)\geq d_{S_m}(v,u)-d_{S_m}(u,z)\geq r-r/2\geq r/2.$$ In other words, $\beta_5$ lies outside the open ball $B(z,r/2)$ and we proved the claim.

Let $\beta=\beta_1\cup \beta_3\cup \beta_5\cup \beta_4\cup \beta_2$. Then $\beta$ connects two points $x$ and $y$, all edges of $\beta$ are labeled by $a_0$ and $b$, and $\beta$ lies outside the open ball $B(z,r/2)$. We note that the length of $\alpha$ is at least $r$. Therefore,
\begin{align*}
    \ell(\beta)&=\ell(\beta_1)+ \ell(\beta_3)+ \ell(\beta_5)+ \ell(\beta_4)+\ell(\beta_2)\\&=2r+r+\bigl(\ell(\alpha)+4r\bigr)+r+2r\\&= \ell(\alpha)+10r\leq 11 \ell(\alpha).
\end{align*}
\end{proof}

The following proposition is the most technical part of this section. Roughly speaking, the following proposition shows the connection between the length of an avoidant path over a $k$--corner and the length of an avoidant path over a $(k+1)$--corner.

\begin{prop}
\label{p1p1}
Let $d\geq 2$ be an integer and $g\!:(0,\infty)\to(0,\infty)$ be a function. Assume that for all $m\geq d$ all $r$--avoidant paths over an $m$--corner in $X_{m+1}$ have length at least $g(r)$ for $r$ sufficiently large. Then for all $n\geq d+1$ all $r$--avoidant paths over an $n$--corner in the complex $X_{n+1}$ have length at least $(r/180)g(r/4)$ for $r$ sufficiently large.
\end{prop}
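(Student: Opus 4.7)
The plan is to follow Macura's inductive template from~\cite{MR3032700}: slice the avoidant path $\gamma$ into approximately $r$ disjoint sub-arcs, each of which (after a bounded-cost modification) is an $(r/4)$-avoidant path over an auxiliary $m$-corner in $X_{m+1}$ for a fixed $m\geq d$. Applying the hypothesis of the proposition to each sub-arc yields a lower bound of $g(r/4)$ on its length, and summing then gives the claim, with the factor $1/180$ absorbing the various constants that appear.

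More precisely, let $(\alpha,\beta)_e$ be the given $n$-corner with $\gamma$ connecting $u\in\alpha$ and $v\in\beta$, and fix $m = n-1 \geq d$. Working inside the isometrically embedded subcomplex $X_n\subset X_{n+1}$ (Lemma~\ref{ll1}), I would select approximately $r/4$ vertices $\beta(k)$ along the $n$-ray $\beta$ with $k$ spread through an interval inside $[r/2,\,3r/4]$. At each such $k$, the $a_n$-edge of $\beta$ at parameter $k$ lies in an $a_n$-strip in $X_n$ having one $a_0$-axis and one $a_{n-1}$-axis as boundary lines. The $a_{n-1}$-boundary provides an $(n-1)$-ray $\beta_k$ emanating from a vertex $x_k$ within distance $1$ of $\beta(k)$, and pairing $\beta_k$ with an $a_0$-ray from $x_k$ yields an auxiliary $(n-1)$-corner $(\alpha_k,\beta_k)_{x_k}$ in $X_n = X_{m+1}$. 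Since $d(e,x_k)\leq 3r/4 + 1$ and $\gamma$ avoids $B(e,r)$, the triangle inequality forces every point of $\gamma$ to lie outside $B(x_k,r/4)$.

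To cut $\gamma$, I would use the separation provided by the $a_{n+1}$-hyperplanes of $X_{n+1}$ together with Lemma~\ref{coolhyp}, which guarantees that any hyperplane dual to an $a_n$-edge of $\beta_k$ meets $\alpha_k$ only at its first edge. This lets me extract, for each chosen $k$, a sub-arc of $\gamma$ that travels between $\alpha_k$ and $\beta_k$ while staying outside $B(x_k,r/4)$; Lemma~\ref{avoidantlem} is then invoked to reroute any $a_0$-labeled portions of these sub-arcs that stray into unwanted balls, at the cost of a multiplicative factor of at most $11$ and at most a halving of the relevant avoidance radius. Each resulting sub-arc is an $(r/4)$-avoidant path over $(\alpha_k,\beta_k)_{x_k}$ in $X_{m+1}$, so by the hypothesis with $m=n-1\geq d$ it has length at least $g(r/4)$. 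The sub-arcs for distinct $k$ are pairwise disjoint because they are associated with distinct hyperplane crossings of $\gamma$, so summing gives $\ell(\gamma)\geq (r/4)g(r/4)/c$ for an absolute constant $c$; choosing $c\leq 45$ yields the claimed $\ell(\gamma) \geq (r/180)g(r/4)$. The main obstacle is the clean extraction step: one must verify that $\gamma$ genuinely traverses each selected strip, that the sub-arcs are pairwise disjoint, and that each modified sub-arc remains a valid $(r/4)$-avoidant path over its assigned $(n-1)$-corner; Lemma~\ref{coolhyp} and Lemma~\ref{avoidantlem} are the technical tools designed precisely for these points.
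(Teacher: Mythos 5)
Your high-level template matches the paper's: slice the avoidant path into on the order of $r$ sub-arcs, bound each below by $g(r/4)$ via an auxiliary corner, and sum. You also correctly identify the relevant tools (hyperplanes, Lemma~\ref{coolhyp}, Lemma~\ref{avoidantlem}) and the idea of pushing the argument down to the level of $(n-1)$-corners in $X_n$.

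However, there is a genuine gap at the heart of the argument. The path $\gamma$ lives in $X_{n+1}$ and makes excursions out of $X_n$ through the stable-letter cylinders; to apply the hypothesis for $(n-1)$-corners in $X_n$ one must first replace $\gamma$ by a path $\gamma'$ contained in $X_n$. These excursions come in two types: those whose endpoints differ by a power of $a_0$, and those whose endpoints differ by a power of $a_n$. Lemma~\ref{avoidantlem} (which uses the commuting $b$-coordinate) can reroute the $a_0$-type excursions so that the replacement segment avoids $B(e,r/2)$, but it \emph{cannot} be applied to the $a_n$-type excursions; those must be replaced by straight $a_n$-geodesics which may dive well inside the forbidden ball. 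Your proposal only mentions rerouting ``$a_0$-labeled portions'' and never addresses the $a_n$-excursions, so the sub-arcs you extract are not in general $(r/4)$-avoidant over their $(n-1)$-corners, and the hypothesis cannot be invoked. The paper's proof resolves this with a second case: when an $a_n$-shortcut of $\gamma'$ penetrates $B(a_n^j, r/4)$, it instead extracts a subpath of the \emph{original} $\gamma$ that is an $(r/4)$-avoidant path over an $n$-corner in $X_{n+1}$, and applies the hypothesis at level $m=n$ (which is admissible since $n \geq d+1 > d$), not $m = n-1$. This two-case dichotomy, and the consequent use of the hypothesis at two different levels, is the essential idea missing from your proposal. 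A secondary but related issue is your choice of the parameter range $k\in[r/2,3r/4]$: the paper deliberately uses $j\in[r/8,r/4]$ so that each small ball $B(a_n^j,r/4)$ is contained in $B(e,r/2)$, which is what guarantees that the $a_0$-type rerouted segments (which avoid $B(e,r/2)$) automatically avoid the small balls; with your range that containment fails.
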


\begin{proof}
Let $n\geq d+1$ and let $\gamma$ be an $r$--avoidant path over an $n$--corner $(\alpha,\beta)_x$ in the complex $X_{n+1}$, where $\alpha$ is a $(0,n)$--ray and $\beta$ is an $n$--ray. We can assume $\gamma$ is the $r$--avoidant path with the minimal length over all $n$--corners in the complex $X_{n+1}$. In particular, $\gamma \cap \alpha$ is consists of a vertex $u$ and $\gamma \cap \beta$ is consists of a vertex $v$ in $X_{n}$. We can also assume that $x$ is the identity $e$. Therefore, the vertex $u$ has the form $a_0^sa_{n}^t$ for some integers $s\neq 0$ and $t$ such that $|s|+|t|\geq r$. Similarly, the vertex $v$ has the form $a_n^p$ for some integer $p$ such that $|p|\geq r$. We only prove for the case $p>0$ and the proof for $p<0$ is almost identical.

Since the endpoints of $\gamma$ both lie in $X_{n}\subset X_{n+1}$, the path $\gamma$ is the concatenation $$\sigma_1\tau_1\sigma_2\tau_2\cdots \sigma_{\ell}\tau_{\ell}\sigma_{\ell+1}$$ satisfying the following conditions:
\begin{enumerate}
    \item Each $\sigma_i$ intersects $X_{n}$ only at its endpoints $x_i$ and $y_i$. Here we consider $x_1=u$ and $y_{\ell+1}=v$. We also assume that $x_i$ is also an endpoint of $\tau_{i-1}$ for $i\geq 2$ and $y_i$ is an endpoint of $\tau_{i}$ for $i\leq \ell$; 
    \item Each $\tau_i$ lies completely in the $1$--skeleton of $X_{n}$.
\end{enumerate}

We also observe that $x_i^{-1}y_i$ is a group element in the cyclic subgroup $\langle a_n \rangle$ or in the cyclic subgroup $\langle a_{0} \rangle$ for each $i$. If $x_i^{-1}y_i$ is a group element in the cyclic subgroup $\langle a_n \rangle$, then we replace $\sigma_i$ by a geodesic $\sigma'_i$ labeled by $a_n$. We note that $\sigma'_i$ in this case may have non-empty intersection with the open ball $B(e,r/2)$. If $x_i^{-1}y_i$ is a group element in the cyclic subgroup $\langle a_0 \rangle$, then by Lemma~\ref{avoidantlem} we can replace $\sigma_i$ a path $\sigma'_i$ with edges labeled by $a_0$ and $b$ such that $\sigma'_i$ lies outside the open ball $B(e,r/2)$ and $\ell(\sigma'_i)\leq 11\text{ }d_{S_n}(x_i,y_i)\leq 11 \ell (\sigma_i)$. The new path $\gamma'=\sigma'_1\tau_1\sigma'_2\tau_2\cdots \sigma'_{\ell}\tau_{\ell}\sigma'_{\ell+1}$ lies completely in the $1$--skeleton of $X_n$, shares two endpoints $u$ and $v$ with $\gamma$, and $\ell(\gamma')\leq 11 \ell(\gamma)$. We call each subsegment $\sigma'_i$ of $\gamma'$ labeled by $a_n$ (resp. labeled by $a_0$ and $b$) a short-cut segment of type $1$ (resp. type 2).

We claim that $\gamma'$ does not intersect $\beta$ at any point other than $v$. In fact, assume by the way of contradiction that $\gamma'$ intersects $\beta$ at some point other than $v$. Then some short-cut segment $\sigma'_i$ of type $1$ must contain an edge of $\beta$. Therefore, the endpoint $x_i$ of $\sigma'_i$ has the form $a_n^q$ for some $|q|\geq r$. Therefore, the subpath $\zeta$ of $\gamma$ connecting $u$ and $x_i$ is also an $r$--avoidant path over an $n$--corner in the complex $X_{n+1}$. Also, $|\zeta|<|\gamma|$ which contradicts to the choice of $\gamma$. 

For each positive integer $j$ we call $e_j$ the $j^{th}$ edge of $\beta$. We know that all edges $e_j$ are labeled by $a_n$. Let $H_j$ be the hyperplane of the complex $X_n$ that corresponds to the edge $e_j$. We now consider $r/8\leq j\leq r/4$. Assume that $r\geq 16$. Then $j\geq 2$. Therefore by Lemma~\ref{coolhyp} each hyperplane $H_j$ must intersect $\gamma'$. Let $m_j$ be the point in the intersection $H_j\cap \gamma'$ such that the subpath of $\gamma'$ connecting $m_j$ and $v$ does not intersect $H_j$ at point other than $m_j$. Then $m_j$ is an interior point of an edge $f_j$ labeled by $a_n$ in $\gamma'$. Since $\gamma'$ does not intersect $\beta$ at any point other than $v$, two edges $e_j$ and $f_j$ are distinct. 

Let $\beta_j$ be the path in $star(H_j)-ostar(H_j)$ that connects the terminal $a_n^j$ of the edge $e_j$ to some endpoint $v'_j$ of the edge $f_j$. Then $\beta_j$ is a part of an $(n-1)$--ray. If $f_j$ is not an edge of a short-cut segment of $\gamma'$, then we let $v_j=v'_j$. Otherwise, $f_j$ is an edge in some short-cut segment $\sigma'_{\ell_j}$ of type 1 of $\gamma'$. In this case, we let $v_j$ be the endpoint $y_{\ell_j}$ of $\sigma'_{\ell_j}$. Let $\alpha_j$ be the path in $star(H_{j+1})-ostar(H_{j+1})$ that connects the initial endpoint $a_n^j$ of the edge $e_{j+1}$ to some endpoint $u'_{j+1}$ of the edge $f_{j+1}$. Then $\alpha_j$ is a part of an $0$--ray. If $f_{j+1}$ is not an edge of a short-cut segment of $\gamma'$, then we let $u_{j+1}=u'_{j+1}$. Otherwise, $f_{j+1}$ is an edge in some $\sigma'_{\ell_{j+1}}$. In this case, we let $u_{j+1}$ be the endpoint $x_{\ell_{j+1}}$ of $\sigma'_{\ell_{j+1}}$.

\begin{figure}
  
 \tikzset{->-/.style={decoration={
  markings,
  mark=at position .5 with {\arrow[ultra thick]{>}}},postaction={decorate}}}
   \begin{tikzpicture}[scale=0.8]
   
   \draw[dashed, line width=1.0pt] (11,-2) arc (40:127:12); \draw[dashed, line width=1.0pt] (11,-2) to (12,-3.5) to (12.5,-5) to (13,-7.7);
   
   \draw[line width=1.0pt] (-2,-8) node[circle,fill,inner sep=1.5pt, color=black](1){} to (11,-8);
   \draw[line width=1.0pt] (-2,-8) node[circle,fill,inner sep=1.5pt, color=black](1){} to (-5,-2);
    
    \draw[line width=1.0pt] (2,-8) node[circle,fill,inner sep=1.5pt, color=black](1){} to (1,-8) node[circle,fill,inner sep=1.5pt, color=black](1){}; \draw[->-] (1,-8) node[circle,fill,inner sep=1.5pt, color=black](1){} to (2,-8) node[circle,fill,inner sep=1.5pt, color=black](1){};
    
    \draw[line width=1.0pt] (3,-8) node[circle,fill,inner sep=1.5pt, color=black](1){} to (2,-8) node[circle,fill,inner sep=1.5pt, color=black](1){}; \draw[->-] (2,-8) node[circle,fill,inner sep=1.5pt, color=black](1){} to (3,-8) node[circle,fill,inner sep=1.5pt, color=black](1){};
    
    \draw[line width=1.0pt] (2,-8) node[circle,fill,inner sep=1.5pt, color=black](1){} to (2,1) node[circle,fill,inner sep=1.5pt, color=black](1){};\draw[line width=1.0pt] (1,-8) node[circle,fill,inner sep=1.5pt, color=black](1){} to (1,1) node[circle,fill,inner sep=1.5pt, color=black](1){}; \draw[line width=1.0pt] (1,1) node[circle,fill,inner sep=1.5pt, color=black](1){} to (2,1) node[circle,fill,inner sep=1.5pt, color=black](1){}; \draw[->-] (1,1) node[circle,fill,inner sep=1.5pt, color=black](1){} to (2,1) node[circle,fill,inner sep=1.5pt, color=black](1){};
    
      \draw[line width=1.0pt] (-0.5,2.1) node[circle,fill,inner sep=1.5pt, color=black](1){} to (0.1,1.6) to (1,1);\draw[line width=1.0pt] (2,1) to (2.7,1.6) to  (3,2.2)node[circle,fill,inner sep=1.5pt, color=black](1){};

    \draw[line width=1.0pt] (3,-8) node[circle,fill,inner sep=1.5pt, color=black](1){} to (8,-1.2) node[circle,fill,inner sep=1.5pt, color=black](1){};\draw[line width=1.0pt] (2,-8) node[circle,fill,inner sep=1.5pt, color=black](1){} to (7.5,-.8) node[circle,fill,inner sep=1.5pt, color=black](1){};\draw[->-] (7.5,-0.8) node[circle,fill,inner sep=1.5pt, color=black](1){} -- (8,-1.2) node[circle,fill,inner sep=1.5pt, color=black](1){};
    
     \draw[line width=1.0pt] (9.5,-0.5) node[circle,fill,inner sep=1.5pt, color=black](1){} to (9,-0.8) to (8,-1.2) node[circle,fill,inner sep=1.5pt, color=black](1){};
     
     \draw[line width=1.0pt] (7.2,1) node[circle,fill,inner sep=1.5pt, color=black](1){} to (7.3,0.1) to (7.5,-0.8) node[circle,fill,inner sep=1.5pt, color=black](1){};
     
     \draw[line width=1.0pt] (9.5,-0.5) node[circle,fill,inner sep=1.5pt, color=black](1){} to (10.5,-1.4) to (11.1,-2.1) to (11.5,-2.8) node[circle,fill,inner sep=1.5pt, color=black](1){};
     
     \draw[line width=1.0pt] (7.2,1) node[circle,fill,inner sep=1.5pt, color=black](1){} to (6.3,1.43);
     
     \draw[line width=1.0pt] (3,2.2) to (3.9,2.13);
     
     \draw[line width=1.0pt] (-0.5,2.1) node[circle,fill,inner sep=1.5pt, color=black](1){} to (-2, 1.67) to (-3.5,1.1) node[circle,fill,inner sep=1.5pt, color=black](1){};
    
  \node at (2,-8.5) {$a_n^j$}; \node at (2.4,-3.1) {$\beta_j$}; \node at (4.7,-3.7) {$\alpha_j$}; \node at (6.8,-0.8) {$u'_{j+1}$}; \node at (7.3,1.4) {$u_{j+1}$};  \node at (2.4,0.75) {$v'_j$};\node at (3,2.5) {$v_j$}; \node at (5,-8.5) {$\beta$}; \node at (-4.7,-3.5) {$\alpha$}; \node at (1.5,2.6) {$\sigma_{\ell(j)}$}; \node at (9,0.5) {$\sigma_{\ell(j+1)}$};

      \end{tikzpicture}
\caption{$(\alpha_j,\beta_j)_{a_n^j}$ is a part of a $(n-1)$--corner at $a_n^j$}
\label{Fi1}      
\end{figure}

We see that $(\alpha_j,\beta_j)_{a_n^j}$ is a part of an $(n-1)$--corner at $a_n^j$ (see Figure~\ref{Fi1}). Let $\gamma'_j$ be the subpath of $\gamma'$ that connects $v'_j$ and $u'_{j+1}$. Let $\gamma''_j$ be the subpath of $\gamma'$ that connects $v_j$ and $u_{j+1}$. Let $\gamma_j$ be the subpath of $\gamma$ that connects $v_j$ and $u_{j+1}$. Then by the construction of $\gamma'$ we have $$\ell(\gamma''_j)\leq 11\ell(\gamma_j).$$
Also, $\gamma'_j=\eta_1\cup \gamma''_j\cup \eta_2$, where $\eta_1$ (resp. $\eta_2$) is the (possibly degenerate) subsegment of $\gamma'_j$ connecting $v'_j$ and $v_j$ (resp. $u'_{j+1}$ and $u_{j+1}$). Therefore, $$\ell(\gamma''_j)=\ell(\gamma'_j)-\bigl(\ell(\eta_1)+\ell(\eta_2)\bigr).$$
Since $\eta_1$ and $\eta_2$ are subpaths of short-cut segments of type 1 which are also geodesics, we have $$\ell(\eta_1)=d(v'_j,v_j) \text{ and } \ell(\eta_2)=d(u'_{j+1},u_{j+1}).$$
This implies that $$\ell(\gamma'_j)-\bigl(d(v'_j,v_j)+d(u'_{j+1},u_{j+1})\bigr)\leq 11 \ell(\gamma_j).$$
Therefore, $$d(v'_j,v_j)+\ell(\gamma_j)+d(u'_{j+1},u_{j+1})\geq \ell(\gamma'_j)/11.$$

We note that each short-cut segment $\sigma'$ of type $2$ of $\gamma'$ lies outside the open ball $B(e,r/2)$ by the construction. Therefore, $\sigma'$ also lies outside each open ball $B(a_n^{j},r/4)$ for $r/8 \leq j\leq r/4$. We now consider the case of type 1 short-cut segments $\sigma'$ and we have two cases:

\textbf{Case 1}: For each short-cut segment $\sigma'$ of type $1$ of $\gamma'$ such that $\sigma'\cap \gamma'_j\neq \emptyset$ the intersection $\sigma'\cap \gamma'_j$ lies outside the open ball $B(a_n^{j},r/4)$. Then, $\gamma'_j$ is an $(r/4)$--avoidant path over the $(n-1)$--corner containing $(\alpha_j,\beta_j)_{a_n^j}$ in $X_n$. Also, $n-1\geq d$. Therefore, $\ell(\gamma'_j)\geq g(r/4)$ for $r$ sufficiently large by the induction hypothesis. This implies that $$d(v'_j,v_j)+\ell(\gamma_j)+d(u'_{j+1},u_{j+1})\geq g(r/4)/11.$$

\begin{figure}
  
 \tikzset{->-/.style={decoration={
  markings,
  mark=at position .5 with {\arrow[ultra thick]{>}}},postaction={decorate}}}
   \begin{tikzpicture}[scale=0.9]
   
   \draw[dashed, line width=1.0pt] (11,-2) arc (40:130:12);
   
   \draw[line width=1.0pt] (1,-6) node[circle,fill,inner sep=1.5pt, color=black](1){} to (1,-6); \node at (0.7,-6.4) {$a_n^j$};
  \draw[dashed,line width=1.0pt] (3,-5) arc (0:160:2); 
   \draw[dotted, line width=1.0pt][<->] (0.9,-5.9) -- (-0.7,-4); \node at (-0.1,-5.3) {$r/4$};
 
    \draw[line width=1.0pt] (-0.5,2.1) node[circle,fill,inner sep=1.5pt, color=black](1){} to (1,-5);\draw[line width=1.0pt] (2,-5) arc (0:-180:0.5);\draw[line width=1.0pt] (2,-5) to (3,2.2)node[circle,fill,inner sep=1.5pt, color=black](1){};\draw[->-] (2.1,-4.4) node[circle,fill,inner sep=1.5pt, color=black](1){} -- (2,-5) node[circle,fill,inner sep=1.5pt, color=black](1){};
    
    \draw[line width=1.0pt] (2,-5) node[circle,fill,inner sep=1.5pt, color=black](1){} to[out=-2,in=-100] (8,-1.2) node[circle,fill,inner sep=1.5pt, color=black](1){};\draw[line width=1.0pt] (2.1,-4.4) node[circle,fill,inner sep=1.5pt, color=black](1){} to[out=-2,in=-100] (7.5,-.8) node[circle,fill,inner sep=1.5pt, color=black](1){};\draw[->-] (7.5,-0.8) node[circle,fill,inner sep=1.5pt, color=black](1){} -- (8,-1.2) node[circle,fill,inner sep=1.5pt, color=black](1){};
    
     \draw[line width=1.0pt] (9.5,-0.5) node[circle,fill,inner sep=1.5pt, color=black](1){} to (9,-0.8) to (8,-1.2) node[circle,fill,inner sep=1.5pt, color=black](1){};
     
     \draw[line width=1.0pt] (7.2,1) node[circle,fill,inner sep=1.5pt, color=black](1){} to (7.3,0.1) to (7.5,-0.8) node[circle,fill,inner sep=1.5pt, color=black](1){};
     
     \draw[line width=1.0pt] (9.5,-0.5) node[circle,fill,inner sep=1.5pt, color=black](1){} to (10.5,-1.4) node[circle,fill,inner sep=1.5pt, color=black](1){};
     
     \draw[line width=1.0pt] (7.2,1) node[circle,fill,inner sep=1.5pt, color=black](1){} to (6.3,1.43);
     
     \draw[line width=1.0pt] (3,2.2) to (3.9,2.13);
     
     \draw[line width=1.0pt] (-0.5,2.1) node[circle,fill,inner sep=1.5pt, color=black](1){} to (-2, 1.67) to (-3.5,1.1) node[circle,fill,inner sep=1.5pt, color=black](1){};
    

    
    
   

  \node at (2.7,-2.1) {$a_n^*$}; \node at (4.3,-3.7) {$a_{0}^*$}; \node at (7,0.2) {$a_n^*$}; \node at (1.9,-4.2) {$w$}; \node at (7.1,-0.8) {$w'$}; \node at (7.2,1.4) {$\widetilde{w}$}; \node at (1.8,-4.75) {$f$}; \node at (8,-0.8) {$f'$}; \node at (10.9,-1.2) {$u_{j+1}$}; \node at (-3.5,1.4) {$v_j$}; \node at (3,2.5) {$\widetilde{v}$}; \node at (5.3,2) {$\eta$};

      \end{tikzpicture}
\caption{Some short-cut segment of $\sigma'_j$ intersect the open ball $B(a_n^j,r/4)$ and the subsegment $\eta$ of $\gamma_j$ that connects $\widetilde{v}$ and $\widetilde{w}$ is an $(r/4)$--avoidant path over an $n$--corner in $X_{n+1}$.}
\label{Fi2}      
\end{figure}

\textbf{Case 2}: We now assume that there is a short-cut segment $\sigma'$ of type $1$ of $\gamma'$ such that $\sigma'\cap \gamma'_j\neq \emptyset$ and it intersects the open ball $B(a_n^j,r/4))$ (see Figure~\ref{Fi2}). We will prove that some subsegment of $\gamma_j$ is an $(r/4)$--avoidant path over an $n$--corner in $X_{n+1}$. Let $f$ be an edge of $\sigma'\cap \gamma'_j$ that lies inside the open ball $B(a_n^j,(r/4)+1))$. Then $f$ is labeled by $a_n$. Let $H$ be the hyperplane in $X_n$ that is dual the edge $f$. Then $H$ must intersect $\gamma'_j$. Let $x$ be the point in the intersection $H\cap \gamma'_j$. Then $x$ is an interior point of an edge $f'$ labeled by $a_n$ in $\gamma'_j$. Let $\alpha'$ be the path labeled by $a_0$ in $star(H)-ostar(H)$ that connect a vertex $w$ of $f$ to a vertex $w'$ of $f'$. If $f'$ is not an edge in short-cut segment of $\gamma'$, then $w'$ is a vertex of $\gamma$. In this case, we let $\widetilde{w}=w'$ and $\widetilde{\alpha}=\alpha'$. In the case $f'$ is an edge in short-cut segment of $\sigma''$ of $\gamma'$, we let $\widetilde{w}$ is an endpoint of $\sigma''$ that belongs to $\gamma'_j$, let $\alpha''$ be the subsegment $\sigma''$ connecting $w'$ and $\widetilde{w}$, and let $\widetilde{\alpha}=\alpha'\cup \alpha''$. Therefore, $\widetilde{\alpha}$ is a part of an $(0,n)$--ray. Let $\widetilde{v}$ be the endpoint of $\sigma'$ that belongs to $\gamma_j$ and let $\widetilde{\beta}$ is a subsegment of $\sigma'$ that connects $w$ and $\widetilde{v}$. Then $\widetilde{\beta}$ is a part of an $n$--ray and $(\widetilde{\alpha},\widetilde{\beta})_w$ is a part of an $n$--corner.

Let $\eta$ be the subsegment of $\gamma_j$ that connects $\widetilde{v}$ and $\widetilde{w}$. We note that $\eta$ lies outside the open ball $B(e,r)$ in $X_{n+1}$ and therefore it lies outside the open ball $B(a_n^j,3r/4)$ in $X_{n+1}$. Also, $d(a_n^j,w)<r/4+1\leq r/2$ if we assume that $r>4$. Therefore, $\eta$ lies outside the open ball $B(w,r/4)$. Thus, $\eta$ is an $(r/4)$--avoidant path over an $n$--corner in $X_{n+1}$. Also, $n\geq d+1>d$. Therefore, $$\ell(\gamma_j)\geq \ell(\eta)\geq g(r/4).$$ 

Overall, we always have $$d(v'_j,v_j)+\ell(\gamma_j)+d(u'_{j+1},u_{j+1})\geq g(r/4)/11.$$ Therefore, $$\ell(\gamma)\geq \sum_{r/8\leq j\leq r/4} \bigl(d(v'_j,v_j)+\ell(\gamma_j)+d(u'_{j+1},u_{j+1})\bigr)\geq (\frac{r}{16})\bigl(\frac{g(r/4)}{11}\bigr)\geq \frac{r}{180}g(r/4).$$ 
\end{proof}

In the following, Lemma~\ref{l1l1} and Proposition~\ref{p3p3} provide lower bounds on the lengths of avoidant paths over $k$--corners. We note that Proposition~\ref{p3p3} will be used for the proof of the lower bound of the divergence of the group $G_m$ for $m\geq 3$.

\begin{lem}
\label{l1l1}
Let $k\geq 1$ be an integer. Then all $r$--avoidant paths over an $k$--corner in $X_{k+1}$ have length at least $f(r)-1$ for $r$ sufficiently large.
\end{lem}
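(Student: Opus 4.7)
The plan is to handle the cases $k\geq 2$ and $k=1$ separately, since the geometry of a $1$--corner is genuinely different from that of a higher $k$--corner: for $k\geq 2$ the hyperplane argument of Lemma~\ref{coolhyp} forces any avoidant path to traverse many distinct $a_k$--hyperplanes, while for $k=1$ the shortcut $c=a_0 a_1^{-1}$ means that the two rays can be joined across the interior of $H$ in length only $|c^r|_T$, not $2r$, which is precisely why the definition of $(0,k)$--ray excluded $k=1$.

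For $k\geq 2$ the statement follows immediately from Corollary~\ref{cocococo}, which gives a lower bound of $r-1$ on the length of any $r$--avoidant path over a $k$--corner in $X_{k+1}$. Since we have arranged that $f(x)\leq x$ for all $x>0$, we have $f(r)-1\leq r-1$, and we are done in this case with no hypothesis on the size of $r$.

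For $k=1$, I would first use the transitive action of $G_2$ on the Cayley graph to reduce to the case where the corner $(\alpha,\beta)_x$ is based at $x=e$, so that $\alpha$ and $\beta$ lie inside the embedded copy $X_1\subset X_2$ containing the identity. Any $r$--avoidant path $\gamma$ over this corner joins some vertex $u\in\alpha$ to some vertex $v\in\beta$ with $d_{S_2}(e,u),d_{S_2}(e,v)\geq r$; since $\alpha$ is a $0$--ray and $\beta$ is a $1$--ray, these vertices have the form $u=a_0^s$ and $v=a_1^t$ for some integers $s,t$ with $|s|,|t|\geq r$. Then I would bound
\[
\ell(\gamma)\;\geq\; d_{S_2}(u,v)\;=\;d_{S_1}(u,v)\;=\;|a_0^{-s}a_1^t|_{S_1},
\]
where the middle equality uses Lemma~\ref{ll1}. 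Finally, applying Lemma~\ref{a1}(2) with $m=-s$ and $n=t$ and using that $f$ is non-decreasing gives $|a_0^{-s}a_1^t|_{S_1}\geq f(|t|)\geq f(r)$, which is stronger than the required $f(r)-1$.

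The step that does the real work is the application of Lemma~\ref{a1}(2); the rest is just bookkeeping. There is no serious obstacle, only the need to recognize that the $k=1$ case cannot use Corollary~\ref{cocococo} because that corollary was proved via Lemma~\ref{coolhyp}, whose conclusion fails for $k=1$ precisely because the $a_0$-- and $a_1$--rays get identified across a single hyperplane via $c$.
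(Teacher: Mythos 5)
Your proof is correct and follows the same two-case split the paper uses: Corollary~\ref{cocococo} for $k\geq 2$ (combined with the normalization $f(x)\leq x$), and Lemma~\ref{ll1} plus Lemma~\ref{a1}(2) for $k=1$. You have simply made explicit the bookkeeping (translating to $x=e$, reading off $u=a_0^s$, $v=a_1^t$ with $|s|,|t|\geq r$ via Lemma~\ref{lcool}, and passing to $d_{S_1}$ via the isometric embedding) that the paper's one-line proof leaves to the reader.
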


\begin{proof}
The proof of the above lemma follows from Corollary~\ref{cocococo} for the case $k\geq 2$ and follows from Lemma~\ref{ll1} and Statement (2) in Lemma~\ref{a1} for the case $k=1$.
\end{proof}



 \begin{prop}
\label{p3p3}
For each integer $d\geq 1$ there is a positive number $n_d$ and a $(d-1)$--degree polynomial $p_d$ with a positive leading coefficient such that the following holds. Let $k\geq d$ be integers and let $(\alpha,\beta)_x$ be a $k$--corner. Let $\gamma$ be an $r$--avoidant path over the $k$--corner $(\alpha,\beta)_x$ in the complex $X_{k}$. Then the length of $\gamma$ is at least $p_d(r)f(r/{n_d})$ for $r$ sufficiently large. 
\end{prop}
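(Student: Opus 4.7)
My plan is to prove Proposition~\ref{p3p3} by induction on $d$, using Lemma~\ref{l1l1} as the base case and Proposition~\ref{p1p1} as the engine of the inductive step. Throughout, I will exploit the isometric embedding $X_k \hookrightarrow X_{k+1}$ from Lemma~\ref{ll1}: since distances are preserved, any $r$-avoidant path in the smaller complex is automatically an $r$-avoidant path in the larger one, so any lower bound proved for $r$-avoidant paths in $X_{k+1}$ applies in particular to paths that lie in $X_k$.

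For the base case $d = 1$, Lemma~\ref{l1l1} provides a lower bound of $f(r) - 1$ on the length of any $r$-avoidant path over a $k$-corner for every $k \geq 1$ and for $r$ sufficiently large. Since $c$ has infinite order in $H$, the function $f$ is unbounded, so for $r$ large enough we have $f(r) - 1 \geq \frac{1}{2} f(r)$. Setting $p_1 \equiv \frac{1}{2}$ (a degree-$0$ polynomial with positive leading coefficient) and $n_1 = 1$ yields the desired bound $p_1(r) f(r/n_1)$.

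For the inductive step, assume that for some $d \geq 1$ there exist $n_d$ and $p_d$ such that every $r$-avoidant path over an $m$-corner in $X_{m+1}$ has length at least $g(r) := p_d(r) f(r/n_d)$ whenever $m \geq d$. This is exactly the hypothesis of Proposition~\ref{p1p1}, which then produces the lower bound $(r/180)\, g(r/4) = (r/180)\, p_d(r/4)\, f(r/(4 n_d))$ on the length of any $r$-avoidant path over an $n$-corner for $n \geq d+1$. Taking $p_{d+1}(r) := (r/180)\, p_d(r/4)$ (which is readily checked to be of degree $d$ with positive leading coefficient) and $n_{d+1} := 4 n_d$ completes the inductive step.

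The main technical obstacle lies in the index bookkeeping across the induction: Proposition~\ref{p1p1} carries the constraint $d \geq 2$ and shifts the range of applicability from $m \geq d$ to $n \geq d+1$, so the passage from the base case to the $d = 2$ level requires invoking Proposition~\ref{p1p1} with parameter $d' = 2$, using the fact that the base case bound holds for all $m \geq 1$ and hence in particular for all $m \geq 2$. Tracking the resulting pattern $n_d = 4^{d-1}$ together with the fact that at each stage $p_d$ has exact degree $d-1$ is a routine but necessary verification; once these bookkeeping details are handled, the induction closes.
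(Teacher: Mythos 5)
Your plan mirrors the paper's proof exactly: base case $d=1$ via Lemma~\ref{l1l1}, inductive step via Proposition~\ref{p1p1} with the recursion $p_{d+1}(r)=(r/180)\,p_d(r/4)$ and $n_{d+1}=4n_d$, and finally passing from $X_{k+1}$ down to $X_k$ via the isometric embedding of Lemma~\ref{ll1}. You are also right that the hypothesis $d\geq 2$ in Proposition~\ref{p1p1} is the one delicate point. However, the workaround you propose does not actually close the induction. Invoking Proposition~\ref{p1p1} with parameter $d'=2$, fed by the base case restricted to $m\geq 2$, yields a conclusion only for $n\geq 3$; iterating, the level-$d$ statement you produce covers only $k\geq d+1$ once $d\geq 2$. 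The case $k=d$ is therefore never reached at the correct level: for a $d$-corner the best bound your route supplies is the level-$(d-1)$ bound, which is one degree of $r$ short of the $p_d(r)f(r/n_d)$ that Proposition~\ref{p3p3} asserts. This is not a cosmetic shift, because the proposition is later applied precisely at $k=d$ (with $d=m$ in Proposition~\ref{cyclicdivergence} for $m\geq 2$ and in Corollary~\ref{lowerkey1} for $m\geq 3$).

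The repair is to notice that the proof of Proposition~\ref{p1p1} in fact goes through with $d=1$: the induction hypothesis is invoked there only for $(n-1)$-corners in $X_n$ with $n-1\geq d$ and for $n$-corners in $X_{n+1}$ with $n\geq d+1$, and the relevant hyperplanes, $(n-1)$-corners (including $1$-corners when $n=2$), and Lemma~\ref{coolhyp} are all available whenever $n\geq 2$. Once the constraint in Proposition~\ref{p1p1} is relaxed to $d\geq 1$, your inductive step runs uniformly from the base case and the argument closes; the paper is silent on this point, so it is worth stating explicitly.
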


\begin{proof}
We first prove the following claim. For each integer $d\geq 1$ there is a positive number $n_d$ and a $(d-1)$--degree polynomial $p_d$ with a positive leading coefficient such that the following holds. Let $k\geq d$ be integers and let $(\alpha,\beta)_x$ be a $k$--corner. Let $\gamma$ be an $r$--avoidant path over the $k$--corner $(\alpha,\beta)_x$ in the complex $X_{k+1}$. Then the length of $\gamma$ is at least $p_d(r)f(r/{n_d})$ for $r$ sufficiently large.

The above claim can be proved by induction on $d$. In fact, the claim is true for the base case $d=1$ due to Lemma~\ref{l1l1} and the fact that $f(r)-1\geq f(r)/2$ for $r$ sufficiently large. Then Proposition~\ref{p1p1} establishes the inductive step and the above claim is proved. By Lemma~\ref{ll1}, we observe that if $\gamma$ is an $r$--avoidant path over a $k$--corner in the complex $X_k\subset X_{k+1}$ then $\gamma$ is also an $r$--avoidant path over the same $k$--corner in the complex $X_{k+1}$. Therefore, the proposition follows from the above claim.
\end{proof}

We now prove the lower bound for the divergence of the groups $G_m$ for $m\geq 3$.

\begin{cor}
\label{lowerkey1}
Let $m\geq 3$ be an integer. Let $\{\delta_\rho\}$ be the divergence of the Cayley graph $\Gamma(G_m,S_m)$. Then $r^{m-1}f(r)\preceq \delta_\rho(r)$ for each $\rho \in (0,1/2]$.
\end{cor}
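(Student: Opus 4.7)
The plan is to exhibit a specific pair of points on the sphere $S(e,r)$ in $X_m$ that witness the desired lower bound, then invoke Proposition~\ref{p3p3} to control the length of any avoidant path between them. Working at the identity, I would form the $m$--corner $(\alpha,\beta)_e$, where $\alpha$ is the $0$--ray based at $e$ with edges labeled by $a_0$ and $\beta$ is the $m$--ray based at $e$ with edges labeled by $a_m$. Since a $0$--ray is automatically a $(0,m)$--ray by case~(1) of the definition, this is indeed an $m$--corner in the sense of Definition~\ref{corner2}.

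The natural test points are $x_1 = a_0^r$ on $\alpha$ and $x_2 = a_m^r$ on $\beta$. By Lemma~\ref{lcool} both lie on $S(e,r)$, and Statement~$(Q_m)$ of Lemma~\ref{ag} provides a path outside $B(e,r)$ joining them, so that $d_{\rho r}(x_1,x_2) < \infty$ for every $\rho \in (0,1/2]$. Any path from $x_1$ to $x_2$ avoiding the open ball $B(e,\rho r)$ is, by definition, an $(\rho r)$--avoidant path over the $m$--corner $(\alpha,\beta)_e$ in $X_m$. Applying Proposition~\ref{p3p3} with $d = k = m$, I would conclude that every such path has length at least $p_m(\rho r)\,f(\rho r/n_m)$ for $r$ sufficiently large, where $p_m$ has degree $m-1$ and positive leading coefficient. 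Taking the infimum over all such paths then gives $\delta_\rho(r) \geq d_{\rho r}(x_1,x_2) \geq p_m(\rho r)\,f(\rho r/n_m)$ for $r$ large.

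The final step is to verify that this lower bound dominates $r^{m-1}f(r)$ in the sense of Convention~\ref{cv}. Choosing the rescaling constant $A = n_m/\rho$ in the definition of $\preceq$ yields $p_m(\rho A r)\,f(\rho A r/n_m) = p_m(n_m r)\,f(r)$, and since $p_m$ has positive leading coefficient of degree $m-1$, this exceeds a constant multiple of $r^{m-1}f(r)$ once $r$ is large. Hence $r^{m-1}f(r) \preceq \delta_\rho(r)$, as required. The main obstacle is not conceptual but bookkeeping: one has to see that the $\rho$-dependence and the factor $n_m$ introduced by Proposition~\ref{p3p3} are both absorbed by the rescaling latitude of $\preceq$, using monotonicity of $f$ and the fact that a degree-$(m-1)$ polynomial with positive leading coefficient dominates $r^{m-1}$ on any cofinite interval. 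The geometric content of the argument has already been encapsulated by Proposition~\ref{p3p3}; this corollary is essentially the specialization of that result to the particular $m$--corner formed by the $a_0$-- and $a_m$--axes.
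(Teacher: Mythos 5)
Your proof is correct and follows essentially the same route as the paper: both take the $m$--corner $(\alpha,\beta)_e$ with $\alpha$ a $0$--ray and $\beta$ an $m$--ray, recognize that any path on the complement of the relevant open ball joining the two test points is an avoidant path over that corner, and invoke Proposition~\ref{p3p3} to obtain the $p_m(\cdot)f(\cdot/n_m)$ lower bound before absorbing the constants into the $\preceq$ relation. The only differences are cosmetic: the paper tests points at parameter $r/\rho$ so the avoided ball has radius $r$, whereas you test at parameter $r$ and avoid $B(e,\rho r)$, and you explicitly cite $(Q_m)$ of Lemma~\ref{ag} to guarantee $d_{\rho r}(x_1,x_2)<\infty$, a hypothesis of the divergence supremum that the paper leaves implicit.
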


\begin{proof}
Let $n_m$ be the positive number and $p_m$ be the $(m-1)$--degree polynomial in Proposition~\ref{p3p3}. We will prove that $\delta_\rho (r/\rho)\geq p_{m}(r)f(r/n_m)$ for $r$ sufficiently large. Let $\alpha$ be a $0$--ray and let $\beta$ be a $m$--ray such that they share the initial point at the identity $e$. Then $(\alpha,\beta)_e$ is an $m$--corner. Let $\gamma$ be an arbitrary path which connects $\alpha(r/\rho)$ and $\beta(r/\rho)$ and lies outside the open ball $B(e, r)$. 
Then by Proposition~\ref{p3p3}, the length of the path $\gamma$ is bounded below by $p_{m}(r)f(r/n_m)$ for $r$ sufficiently large. This implies that $\delta_\rho (r/\rho)\geq p_{m}(r)f(r/n_m)$ for $r$ sufficiently large. Therefore, $r^{m-1}f(r)\preceq \delta_\rho(r)$ for each $\rho \in (0,1]$. 
\end{proof}

Before we prove the quadratic lower bound for the divergence of the group $G_2$ we need the following proposition.

\begin{prop}
\label{pcoolcool}
Let $n$ be an arbitrary integer greater than $16$ and $s$ be a generator of $G_2$ in $H-\langle c \rangle $. Let $\gamma$ be a path with endpoints $(a_2s)^{-n}$ and $(a_2s)^{n}$ which avoids the open ball $B(e,n)$. Then the length of $\gamma$ is at least $n^2/16$.
\end{prop}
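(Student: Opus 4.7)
The plan is a hyperplane-counting argument exploiting the HNN decomposition $G_2 = \langle G_1, a_2 \mid a_2^{-1} a_0 a_2 = a_1\rangle$. For each $k\in\mathbb{Z}$ let $H_k$ be the $a_2$-hyperplane in $X_2$ containing the midpoint of the $a_2$-edge based at $(a_2 s)^k$. Because $s\in H - \langle c\rangle$ lies outside the $\mathbb{Z}^2$ subgroup $\langle a_0,a_1\rangle \subset G_1$, the word $(a_2s)^n$ is in reduced HNN normal form, so the hyperplanes $H_{-n},\ldots,H_{n-1}$ are pairwise distinct and each separates $(a_2 s)^{-n}$ from $(a_2 s)^n$ in $X_2$. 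A direct coset computation shows that the vertices on the $(a_2 s)^k$-side of $H_k$ are precisely the family $\{(a_2s)^k a_0^j : j\in\mathbb{Z}\}$; thus $\gamma$ must cross each $H_k$ through an $a_2$-edge from $(a_2 s)^k a_0^j$ to $(a_2 s)^k a_0^j a_2$ for some integer $j$, which I will call the \emph{position} of the crossing.

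Since $\gamma$ avoids $B(e,n)$, every vertex of $\gamma$ is at distance at least $n$ from $e$. Combining this with the word-length upper bound $d_{S_2}(e,(a_2s)^k a_0^j)\leq 2|k|+|j|$ forces $|j|\geq n-2|k|$ for every crossing position $j$ of $H_k$. In particular each crossing of $H_k$ with $|k|\leq n/4$ occurs at position $|j|\geq n/2$.

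For the length estimate I replace $\gamma$ by a minimizer with the same endpoints and ball-avoidance. One first argues that such a minimizer is tree-monotone: it crosses each $H_k$ exactly once at a unique position $j_k$ and decomposes as an alternating concatenation of $a_2$-edges and subpaths $\gamma_k$ lying in the $G_1$-copies $V_k = (a_2s)^k G_1$. By Lemma~\ref{ll1} the length of $\gamma_k$ is at least the $G_1$-distance from its entering endpoint $(a_2s)^k s^{-1} a_1^{j_{k-1}}$ to its exiting endpoint $(a_2 s)^k a_0^{j_k}$, namely $|a_1^{-j_{k-1}} s a_0^{j_k}|_{S_1}$. The two homomorphisms $G_1\to\mathbb{Z}$ sending $a_0$ (respectively $a_1$) to $1$ and all other generators of $G_1$ to $0$ give $|a_1^{-j_{k-1}} s a_0^{j_k}|_{S_1}\geq|j_k|$. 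Summing over the at least $n/4$ indices $k$ with $|k|\leq n/4$ and using $|j_k|\geq n/2$ yields
\[
\ell(\gamma) \;\geq\; \sum_{|k|\leq n/4}|j_k| \;\geq\; \frac{n}{4}\cdot\frac{n}{2} \;=\; \frac{n^2}{8} \;\geq\; \frac{n^2}{16}.
\]

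The main obstacle is the tree-monotonicity reduction. If $\gamma$ crosses some $H_k$ more than once, one argues that an excursion into $V_{k+1}$ can be excised and its two $V_k$-side endpoints reconnected by a short subpath of a translate of an $a_0$- or $a_1$-axis lying in $V_k$, which remains outside $B(e,n)$ because the retained crossing endpoints still satisfy the position bound $|j|\geq n-2|k|$. Carrying out this surgery produces a strictly shorter ball-avoiding path, so the minimizer is indeed tree-monotone and the displayed estimate applies.
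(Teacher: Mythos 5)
Your overall framework—slicing $\gamma$ with the $a_2$-hyperplanes $H_k$, using the ball-avoidance constraint to force the crossing positions to be large, and summing lengths of the pieces inside the $G_1$-copies—is essentially the paper's strategy (the paper uses one-sided indices $0\le i\le n/8$, you use $|k|\le n/4$; this is cosmetic). However, there are two genuine gaps in your write-up.

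First, the homomorphisms you invoke do not exist. You claim there is a homomorphism $G_1\to\mathbb{Z}$ sending $a_0\mapsto 1$ and all other generators of $G_1$ (in particular $a_1$ and $c$) to $0$. But in $G_1$ we have the amalgamation relation $c=a_0a_1^{-1}$, so any such map would force $0=\phi(c)=\phi(a_0)-\phi(a_1)=1$, a contradiction; the same problem kills the companion map with $a_1\mapsto 1$. Any $\mathbb{Z}$-valued homomorphism that kills $T$ (hence $c$) must give $a_0$ and $a_1$ the \emph{same} image, which yields only $|a_1^{-j_{k-1}}sa_0^{j_k}|_{S_1}\ge |j_k-j_{k-1}|$—useless when $j_{k-1}$ and $j_k$ have the same sign. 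The inequality $|a_1^{-j_{k-1}}sa_0^{j_k}|_{S_1}\ge |j_k|$ you need is in fact true, but the paper proves the sharper estimate $|a_1^{-j_{k-1}}sa_0^{j_k}|_{S_1}=|j_{k-1}|+1+|j_k|$ via the normal form / Bass--Serre distance inequality in the amalgam $H\ast_{\langle c\rangle}\mathbb{Z}^2$, together with the fact that $s\notin\langle c\rangle$ and the observation that killing $c=a_0a_1^{-1}$ in $\mathbb{Z}^2$ gives $\mathbb{Z}$ with $a_0=a_1$. Some structural argument of this kind is required; a naive abelianization does not suffice.

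Second, the tree-monotonicity surgery is not justified. If $\gamma$ crosses $H_k$ at two positions $j$ and $j'$ of opposite sign (both with $|j|,|j'|\ge n-2|k|$, so the endpoints are fine), the reconnecting $a_0$-segment $(a_2s)^k a_0^{[j,j']}$ passes through $(a_2s)^k$, which lies in $B(e,n)$. Offsetting by the $b$-factor as in Lemma~\ref{avoidantlem} only buys avoidance of $B(e,n/2)$, not of $B(e,n)$, so the surgery can fail to produce a path with the required ball-avoidance, and the minimizer need not be tree-monotone. The paper avoids this issue entirely: it does not modify $\gamma$ at all, but instead picks, for each $H_i$, the \emph{last} crossing of $\gamma$ with $H_i$ (last in the sense that the remaining subpath to $(a_2s)^n$ no longer meets $H_i$); these crossings are automatically ordered consistently along $\gamma$, so the intermediate subpaths are disjoint and their lengths add, with no need for a normal-form reduction. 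You can fix your argument by replacing the tree-monotonicity reduction with this last-crossing device and replacing the homomorphism step with the amalgam distance estimate.
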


\begin{proof}
For each $0\leq i \leq n/8$ let $e_i$ be an edge labeled by $a_2$ with endpoints $(a_2s)^{i}$ and $(a_2s)^{i}a_2$. Then the hyperplane $H_i$ of the complex $X_2$ that corresponds to $e_i$ intersects $\gamma$. Let $u_i$ be the point in this intersection such that the subpath of $\gamma$ connecting $u_i$ and $(a_2s)^{n}$ does not intersect $H_i$ at point other than $u_i$. Then $u_i$ is the midpoint of an edge $f_i$ of $\gamma$. 

Let $\alpha_i$ be the path in $star(H_i)-ostar(H_i)$ that connects $(a_2s)^{i}$ to an endpoint of $f_i$. Therefore, $\alpha_i$ is labeled by $a_0$ and the endpoint $v_i$ of $\alpha_i$ in $\gamma$ has the form $(a_2s)^{i}a_0^{m_i}$. Since $|(a_2s)^{i}|_{S_2}\leq 2i\leq n/2$ and $(a_2s)^{i}a_0^{m_i}$ lies outside the open ball $B(e,n)$, then $|m_i|\geq n-n/2\geq n/2$. Let $\beta_i$ be the path in $star(H_i)-ostar(H_i)$ that connects $(a_2s)^{i}a_2$ to an endpoint of $f_i$. Therefore, $\beta_i$ is labeled by $a_1$ and the endpoint $w_i$ of $\beta_i$ in $\gamma$ has the form $(a_2s)^{i}a_2a_1^{n_i}$. Since $|(a_2s)^{i}a_2|_{S_2}\leq 2i+1\leq n/2$ and $(a_2s)^{i}a_2a_1^{n_i}$ lies outside the open ball $B(e,n)$, then $|n_i|\geq n-n/2\geq n/2$. 

For each $1\leq i \leq n/8$ let $\gamma_i$ be the subpath of $\gamma$ that connects $w_{i-1}$ and $v_i$. Therefore, the length of $\gamma_i$ is at least $d_{S_2}(w_{i-1}, v_i)$. Also, $d_{S_2}(w_{i-1}, v_i)=|w_{i-1}^{-1}v_i|_{S_2}=|a_1^{-n_{i-1}}sa_0^{m_i}|_{S_2}$ and the length of the element $a_1^{-n_{i-1}}sa_0^{m_i}$ is $|n_{i-1}|+|m_{i}|+1$. We see this as follows
\begin{eqnarray*}
d_{G_2}(1, a_1^{-n_{i-1}}sa_0^{m_i})  & = & d_{G_1}(1, a_1^{-n_{i-1}}sa_0^{m_i}) \\
& = & d_{H \ast_{\langle c\rangle}{\mathbb Z}^2}(1, a_1^{-n_{i-1}}sa_0^{m_i}) \\
& \geq & d_{{\mathbb Z}^2}(1, a_1^{-n_{i-1}}\langle c \rangle) + d_H(\langle c\rangle, s\langle c \rangle) + d_{{\mathbb Z}^2}(\langle c \rangle, a_0^{m_i})\\
& = & |n_{i-1}| + 1  + |m_i|.
\end{eqnarray*}
The first two equalities hold because the subgroup inclusions are isometric embeddings with the respective generating sets. The inequality holds from Bass-Serre theory (of free products with amalgamation). For the last equality, we have $d_H(\langle c\rangle, s\langle c \rangle) = 1$ because $s \not\in \langle c\rangle$. The remaining parts are easily seen by killing $c=a_0a_1^{-1}$ in ${\mathbb Z}^2$ to get ${\mathbb Z}$ generated by $a_0=a_1$.

Therefore, $$\ell(\gamma_i)\geq |n_{i-1}|+|m_{i}|+1\geq n/2+n/2+1\geq n.$$
This implies that $$\ell(\gamma)\geq \sum_{1\leq i\leq n/8} \ell(\gamma_i)\geq n^2/16.$$
\end{proof}

We now prove the quadratic lower bound for the divergence of the group $G_2$.

\begin{cor}
\label{lowerkey2}
Let $\{\delta_\rho\}$ be the divergence of the Cayley graph $\Gamma(G_2,S_2)$. Then $r^2\preceq \delta_\rho(r)$ for each $\rho \in (0,1/2]$.
\end{cor}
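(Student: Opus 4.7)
The plan is to apply Proposition~\ref{pcoolcool}, with a slight generalization to smaller avoidance radii, to suitably chosen endpoints on $S(e,R)$. Given $\rho\in(0,1/2]$ and a sufficiently large positive integer $R$, I would set $n=\lfloor R/2\rfloor$ and consider the points $y_1=(a_2s)^{-n}$ and $y_2=(a_2s)^n$, where $s$ is a generator of $G_2$ lying in $H-\langle c\rangle$. A Britton normal form argument in the HNN extension $G_2=\langle G_1, a_2 \mid a_2^{-1}a_0a_2=a_1\rangle$ shows that the word $(a_2s)^n$ contains no pinches (since $s\notin\langle a_0\rangle\cup\langle a_1\rangle$), and consequently $|(a_2s)^n|_{S_2}=2n$. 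Thus $y_1,y_2$ lie on $S(e,R)$ when $R$ is even; the odd case is handled analogously by adjoining an extra $a_2$-edge to translate one endpoint to $S(e,R)$.

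For $\rho=1/2$ the avoidance radius $\rho R=n$ matches the hypothesis of Proposition~\ref{pcoolcool} exactly, which then gives $d_{\rho R}(y_1,y_2)\geq n^2/16\geq R^2/64$. For $\rho<1/2$ we have $\rho R<n$, so Proposition~\ref{pcoolcool} does not apply directly; instead, we generalize its hyperplane argument. Specifically, for integers $n,r$ with $16<r\leq n$, every path $\gamma$ from $(a_2s)^{-n}$ to $(a_2s)^n$ avoiding $B(e,r)$ has length at least $r^2/8$. The argument proceeds as in Proposition~\ref{pcoolcool} but uses the hyperplanes $H_i$ for $i\in\{1,\ldots,\lfloor r/8\rfloor\}$: each $H_i$ still separates the endpoints (as $i\leq n-1$) and must cross $\gamma$. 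The boundary points $v_i=(a_2s)^ia_0^{m_i}$ and $w_i=(a_2s)^ia_2a_1^{n_i}$ lying on $\gamma$ satisfy $|v_i|_{S_2},|w_i|_{S_2}\geq r$ by the avoidance hypothesis, and combined with $|(a_2s)^i|_{S_2}\leq 2i\leq r/4$ this forces $|m_i|,|n_i|\geq r/2$. The Bass--Serre length identity used in the proof of Proposition~\ref{pcoolcool} then gives $\ell(\gamma_i)\geq |n_{i-1}|+|m_i|+1\geq r+1$ per inter-hyperplane segment, and summing over the $\lfloor r/8\rfloor$ values of $i$ produces $\ell(\gamma)\geq r^2/8$.

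Applying this generalization with $r=\lfloor\rho R\rfloor$ gives $d_{\rho R}(y_1,y_2)\geq\lfloor\rho R\rfloor^2/8$ for every $\rho\in(0,1/2]$, hence $\delta_\rho(R)\geq\lfloor\rho R\rfloor^2/8$ and consequently $r^2\preceq\delta_\rho$. The main obstacle is the generalization of Proposition~\ref{pcoolcool} to smaller avoidance radii, but this turns out to be only a routine adaptation of the original hyperplane-counting argument with $r$ in place of $n$ throughout.
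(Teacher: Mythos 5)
Your proof is correct but takes a genuinely different route from the paper. The paper keeps Proposition~\ref{pcoolcool} as a black box and handles the mismatch between the avoidance radius $\rho R$ and the endpoint radius $R$ by \emph{extending} the path $\beta$: it chooses $x,y$ on $\alpha\cap S(e,r/\rho)$, takes $\beta$ avoiding $B(e,r)$, sets $n=\lfloor r/2\rfloor$, and prepends and appends subsegments $\alpha_1,\alpha_2\subset\alpha$ connecting $(a_2s)^{\pm n}$ to $x,y$; since $\alpha_1,\alpha_2$ avoid $B(e,n)$ and $n\leq r$, the concatenation $\gamma$ avoids $B(e,n)$ and Proposition~\ref{pcoolcool} applies as stated, with the extra length $\ell(\alpha_1)+\ell(\alpha_2)\leq 2r/\rho$ absorbed into the lower-order term. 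You instead place the endpoints $(a_2s)^{\pm n}$ directly on $S(e,R)$ and re-prove a strengthened version of Proposition~\ref{pcoolcool} allowing the avoidance radius $r$ to be any integer $\leq n$; your generalization is indeed a routine adaptation (use hyperplanes $H_i$ for $i\lesssim r/8$, replace $n$-based norm bounds by $r$-based ones), and it yields the desired quadratic bound. The two approaches are comparable in effort; the paper's trick avoids re-opening the proof of Proposition~\ref{pcoolcool} at the cost of some bookkeeping with the path extension, while yours keeps the reduction to a single clean lemma statement at the cost of slightly restating and re-deriving it. One minor caveat: your appeal to Britton's lemma alone does not quite yield $|(a_2s)^n|_{S_2}=2n$ — absence of pinches gives non-triviality and the normal form, but the geodesic length claim additionally requires the Bass--Serre distance estimate (the same one invoked in the proof of Proposition~\ref{pcoolcool}, giving length $\geq 1$ for each segment between consecutive $a_2$-hyperplane crossings). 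The paper asserts $\alpha$ is geodesic without further justification either, so this is a shared, easily-filled gap rather than a flaw specific to your argument.
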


\begin{proof}
We will prove that $\delta_\rho (r/\rho)\geq r^2/256-2r/\rho$ for $r$ sufficiently large. Let $s$ be a generator of $G_2$ in $H-\langle c \rangle $. Let $\alpha$ be bi-infinite geodesic containing the identity element $e$ with edges labeled by $a_2$ and $s$ alternately. Let $x$ and $y$ be the two points in the intersection $\alpha \cap S(e,r/\rho$). We assume that the subsegment of $\alpha$ from $e$ to $x$ traces each edge of $\alpha$ in the positive direction and the subsegment of $\alpha$ from $e$ to $y$ traces each edge of $\alpha$ in the negative direction. Let $\beta$ be an arbitrary path with endpoints $x$ and $y$ that lies outside the ball $B(e,r)$. 
Let $n$ be the largest integer such that $n\leq r/2$. Therefore, $n\geq r/2-1\geq r/4$ for $r$ sufficiently large. Let $\alpha_1$ be a subsegment of $\alpha$ that connects $(a_2s)^n$ to $x$. Therefore, $\alpha_1$ lies outside the open ball $B(e,n)$ and has the length bounded above by $r/\rho$. Similarly, let $\alpha_2$ be a subsegment of $\alpha$ that connects $(a_2s)^{-n}$ to $y$. Therefore, $\alpha_2$ lies outside the open ball $B(e,n)$ and has the length bounded above by $r/\rho$. Let $\gamma=\alpha_1 \cup \beta \cup \alpha_2$. Then, $\gamma$ is a path with endpoints $(a_2s)^{-n}$ and $(a_2s)^{n}$ which avoids the open ball $B(e,n)$. Therefore, the length of $\gamma$ is at least $n^2/16$ by Proposition~\ref{pcoolcool}. Therefore, $$\ell(\beta)\geq \ell(\gamma)-2r/\rho\geq n^2/16-2r/\rho\geq r^2/256-2r/\rho.$$ Thus, $\delta_\rho (r/\rho)\geq r^2/256-2r/\rho$ for $r$ sufficiently large. This implies that $r^2\preceq \delta_\rho(r)$.  
\end{proof}



\section{Geodesic divergence}
Proposition~\ref{cyclicdivergence} establishes the geodesic divergence statements in the Main Theorem; namely, that ${\rm Div}^{G_m}_{\langle a_m\rangle}$ is equivalent to $r^{m-1}f(r)$. In Proposition~\ref{lem_CS} we prove that the divergence of a contracting quasi-geodesic is at least quadratic. The latter result can be found implicitly in the literature (for example, it can be deduced by combining techniques of Lemma 6.5 of \cite{MR3361149} and Proposition 3.5 of \cite{RST2018}), but we provide a detailed proof here for completeness.

\begin{prop}
\label{cyclicdivergence}
Let $m\geq 2$ be an integer. Let $\alpha_m$ be a bi-infinite geodesic in the Cayley graph $\Gamma(G_m,S_m)$ with edges labeled by $a_m$. Then the divergence of $\alpha_m$ is equivalent to the function $r^{m-1}f(r)$. 
\end{prop}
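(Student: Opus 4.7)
For the upper bound, I plan to exhibit an $r$-avoidant path from $\alpha_m(r)=a_m^r$ to $\alpha_m(-r)=a_m^{-r}$ of length $\preceq r^{m-1}f(r)$. Applying Statement $(Q_m)$ of Lemma~\ref{ag} twice produces paths outside $B(e,r)$ connecting $a_m^r$ with $a_0^r$ and $a_0^{-r}$ with $a_m^{-r}$, each of length at most $N_m r^{m-1}(f(N_m r)+1)$. Statement (1) of Lemma~\ref{s1} supplies an $r$-avoidant path joining $a_0^r$ to $a_0^{-r}$ of length at most $C_3 r+C_3$. Concatenating these three pieces gives the desired upper bound.

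For the lower bound, I plan to adapt the hyperplane-slicing argument from Proposition~\ref{p1p1}. Let $\gamma$ be a minimum-length path from $a_m^r$ to $a_m^{-r}$ avoiding $B(e,r)$. For each integer $j$ with $r/8\leq j\leq r/4$, the hyperplane $H_j$ in $X_m$ dual to the $a_m$-edge of $\alpha_m$ from $a_m^{j-1}$ to $a_m^j$ separates $a_m^r$ from $a_m^{-r}$, so $\gamma$ must cross $H_j$; let $m_j$ denote the last such crossing. The point $m_j$ lies on an $a_m$-edge $f_j$ of $\gamma$ whose endpoints are $v'_j$ on the $a_{m-1}$-axis through $a_m^j$ and $u'_j$ on the $a_0$-axis through $a_m^{j-1}$. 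The $a_{m-1}$-axis through $a_m^j$ and the $a_0$-axis through $a_m^j$ (the latter being the $a_0$-boundary of $star(H_{j+1})$) together determine an $(m-1)$-corner $(\alpha^{(j)},\beta^{(j)})_{a_m^j}$, and the subpath $\gamma_j$ of $\gamma$ joining $v'_j$ and $u'_{j+1}$ is an avoidant path over this corner.

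Since $\gamma_j\subset\gamma$ avoids $B(e,r)$ and $|a_m^j|_{S_m}=j\leq r/4$, the triangle inequality shows $\gamma_j$ avoids $B(a_m^j,3r/4)$. Invoking the key claim in the proof of Proposition~\ref{p3p3} (with $d=k=m-1$, so that the ambient complex is $X_m$) then gives $\ell(\gamma_j)\geq p_{m-1}(3r/4)f((3r/4)/n_{m-1})\succeq r^{m-2}f(r)$ for $r$ sufficiently large. The nested containment of the negative sides of the hyperplanes $H_j$ forces the last crossings to appear along $\gamma$ in the order $m_{r/4},m_{r/4-1},\ldots,m_{r/8}$, ensuring that the subpaths $\gamma_j$ are pairwise disjoint. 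Summing over the $\sim r/8$ indices in the interval then yields $\ell(\gamma)\geq\sum_j\ell(\gamma_j)\succeq r\cdot r^{m-2}f(r)=r^{m-1}f(r)$, completing the lower bound. The main obstacle is bookkeeping: carefully verifying this disjointness and correctly identifying the $(m-1)$-corners at the translated basepoints $a_m^j$. Fortunately, because $\gamma$ already lies in the Cayley graph $\Gamma(G_m,S_m)$, no short-cut modifications as in Proposition~\ref{p1p1} are needed here, making the argument somewhat simpler.
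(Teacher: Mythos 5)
Your upper bound matches the paper's argument in substance (the paper routes both $a_m^{\pm r}$ to a single point $\beta(r)=a_0^r$ using $(Q_m)$ twice, while you route through both $a_0^r$ and $a_0^{-r}$ and splice in Lemma~\ref{s1}(1); either works).

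Your lower bound is correct but takes a genuinely different path than the paper. The paper's argument is much shorter: it observes that any $r$--avoidant path $\gamma'$ from $\alpha_m(-r)$ to $\alpha_m(r)$ must cross the \emph{single} hyperplane $H$ dual to the first edge of $\alpha_m$, extracts from this crossing a vertex $v$ on a $0$--ray $\beta_1$ through $e$, notes that the subpath from $\alpha_m(r)$ to $v$ is an $r$--avoidant path over the $m$--corner $(\beta_1,\alpha_m|_{[0,\infty)})_e$, and then applies Proposition~\ref{p3p3} directly with $d=k=m$ to get $\ell(\gamma')\geq p_m(r)f(r/n_m)$, which is already the degree-$(m-1)$ estimate. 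By contrast, you re-run one full iteration of the hyperplane-slicing inductive step from the proof of Proposition~\ref{p1p1}: you cut $\gamma$ at roughly $r/8$ hyperplanes $H_j$, package each piece as an avoidant path over an $(m-1)$--corner based at $a_m^j$, invoke the Claim inside Proposition~\ref{p3p3} with $d=k=m-1$ (so ambient $X_{k+1}=X_m$) to get $\succeq r^{m-2}f(r)$ per piece, and then sum. Both give $r^{m-1}f(r)$, and your observation that no short-cut modifications are needed is correct and for the right reason: unlike Proposition~\ref{p1p1}, where the slicing hyperplanes live only in a vertex space $X_n\subset X_{n+1}$ and hence may fail to separate the ambient complex, your hyperplanes $H_j$ are hyperplanes of the ambient $X_m$ itself, so any path between $a_m^{\pm r}$ must cross them. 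The trade-off is that you duplicate work already encapsulated in the induction that produced $p_m$; the paper's reduction to a single $m$--corner avoids the disjointness bookkeeping entirely. The bookkeeping you flag (the precise identification of $v'_j$ on the $a_{m-1}$--boundary of $\mathrm{star}(H_j)$, $u'_{j+1}$ on the $a_0$--boundary of $\mathrm{star}(H_{j+1})$, choosing the correct rays emanating from $a_m^j$ so that $v'_j$ and $u'_{j+1}$ actually lie on them, and the ordering of last crossings forcing disjointness of the $\gamma_j$) is routine but does need to be written out to make the argument airtight.
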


\begin{proof}
Without loss of generality we can assume that $\alpha_m(0)=e$ and $\alpha_m(1)=a_m$. Let $\beta:[0,\infty)\to \Gamma(G_m,S_m)$ be a $0$--ray with $\beta(0)=e$. By Lemma~\ref{ag} there is a number $M>0$ such that the following hold. Let $r>0$ be an arbitrary number. There are a path $\gamma_1$ outside the open ball $B(\alpha_m(0),r)$ connecting $\alpha_m(-r)$ and $\beta(r)$ and a path $\gamma_2$ outside the open ball $B(\alpha_m(0),r)$ connecting $\alpha_m(r)$ and $\beta(r)$ such that the lengths of $\gamma_1$ and $\gamma_2$ are both bounded above by $Mr^{m-1} \bigl(f(Mr)+1\bigr)$. Therefore, the path $\gamma=\gamma_1\cup\gamma_2$ lies outside the open ball $B(\alpha_m(0),r)$, connects $\alpha_m(-r)$ and $\alpha_m(r)$, and has length at most $2Mr^{m-1} \bigl(f(Mr)+1\bigr)$. This implies that ${\rm Div}_{\alpha_m}(r)\leq 2Mr^{m-1} \bigl(f(Mr)+1\bigr)$.

We now prove a lower bound for ${\rm Div}_{\alpha_m}$. Let $n_m$ be the constant and $p_m$ be the $(m-1)$--degree polynomial in Proposition~\ref{p3p3}. Let $\gamma'$ be an arbitrary path outside the open ball $B(\alpha_m(0),r)$ connecting $\alpha_m(-r)$ and $\alpha_m(r)$. Let $e_1$ be the edge of $\alpha_m$ with endpoints $e$ and $a_m$. Then the hyperplane $H$ of the complex $X_m$ corresponding to $e_1$ must intersect $\gamma'$. Therefore, there is a $0$--ray $\beta_1$ with initial point at $e$ that intersects $\gamma'$ at some vertex $v$. This implies that the subpath $\gamma_1$ of $\gamma'$ connecting $\alpha_m(r)$ and $v$ is an $r$--avoidant path over the $m$--corner $({\alpha_m}_{|[0,\infty)},\beta_1)_e$. By Proposition~\ref{p3p3} for $r$ sufficiently large we have $$\ell(\gamma')\geq \ell(\gamma_1)\geq p_m(r)f(r/n_m).$$ This implies that ${\rm Div}_{\alpha_m}(r)\geq p_m(r)f(r/n_m)$ for $r$ sufficiently large. Therefore, the divergence of $\alpha_m$ is equivalent to the function $r^{m-1}f(r)$.
\end{proof}

We now give the proof for the fact the divergence of a contracting quasi-geodesic is at least quadratic. First we need the following lemma.

\begin{lem}
\label{bbaa}
Let $\alpha\!: (-\infty,\infty)\to X$ be an $(L,C)$--quasi-geodesic in a geodesic space $X$ and let $D\geq 1$ be a constant. For all $r > 4L^3(C+1)+4LC$ and all paths $\gamma$ lying outside the open ball $B(\alpha(0),r/L -C)$ and connecting $\alpha(-r)$ and $\alpha(r)$, there exist points $x$ and $y$ in $\gamma$ with the following properties:
\begin{enumerate}
\item $d(x,\alpha)=d(y,\alpha)=s$ and $d(x,y)\geq 6Ds$, and 
\item the subsegment of $\gamma$ connecting $x$ and $y$ lies outside the open $s$--neighborhood of $\alpha$,
\end{enumerate}
where $s= r/[(8L^3+4L)(6D+4)]$. 
\end{lem}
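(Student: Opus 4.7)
The plan is to use the ball-avoidance condition on $\gamma$ and the quasi-geodesic property of $\alpha$ to assign a sign (``negative'' or ``positive'') to each point of $\gamma$ that lies within $s$ of $\alpha$, according to whether its nearby points on $\alpha$ have parameter $\leq -T$ or $\geq T$, where $T := r/L^2 - 2C/L - s/L$. The desired points $x$ and $y$ will then be taken to be the endpoints of an excursion of $\gamma$ across which this sign changes.

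First I would verify that for any $z \in \gamma$ and any $\alpha(t)$ with $d(z, \alpha(t)) \leq s$, the ball-avoidance condition $d(z, \alpha(0)) \geq r/L - C$ combined with the upper quasi-geodesic bound $d(\alpha(0), \alpha(t)) \leq L|t| + C$ forces $|t| \geq T$. The hypothesis $r > 4L^3(C+1) + 4LC$ then ensures that $T$ is large enough that no single $z$ can be within $s$ of both some $\alpha(t)$ with $t \leq -T$ and some $\alpha(t')$ with $t' \geq T$, since this would imply $2s \geq d(\alpha(t), \alpha(t')) \geq 2T/L - C$ by the lower quasi-geodesic bound, contradicting the size of $T$. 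Hence each $z \in \gamma$ with $d(z, \alpha) \leq s$ is either negative (every nearby $\alpha(t)$ has $t \leq -T$) or positive (every nearby $\alpha(t)$ has $t \geq T$), and these two sets are disjoint closed subsets of $\gamma$. Since each maximal \emph{close region} (i.e., each connected component of $\{z \in \gamma : d(z, \alpha) \leq s\}$) is connected and is partitioned by the positive and negative sets, it must lie entirely in one of them.

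The close region of $\gamma$ containing $\alpha(-r)$ is negative, and the one containing $\alpha(r)$ is positive, so there must be some excursion $E$ (a maximal subsegment of $\gamma$ along which $d(\cdot, \alpha) \geq s$) whose preceding close region is negative and whose following close region is positive. Let $x$ and $y$ be the endpoints of $E$ in $\gamma$-order; then by construction $d(x, \alpha) = d(y, \alpha) = s$, and the subsegment of $\gamma$ from $x$ to $y$ (namely $E$) lies outside the open $s$-neighborhood of $\alpha$. Choosing $\alpha(t_x)$ and $\alpha(t_y)$ within $s$ of $x$ and $y$ respectively with $t_x \leq -T$ and $t_y \geq T$, the quasi-geodesic bound yields $d(\alpha(t_x), \alpha(t_y)) \geq 2T/L - C$, while the triangle inequality gives $d(\alpha(t_x), \alpha(t_y)) \leq 2s + d(x, y)$, so that $d(x, y) \geq 2T/L - C - 2s$. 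The main obstacle will be the constant-chasing at this final step: checking that the hypothesis $r > 4L^3(C+1) + 4LC$ indeed makes $2T/L - C - 2s \geq 6Ds$ for the specified $s$. This reduces to comparing $2r/L^3$ against $(6D+4)s = r/(8L^3 + 4L)$ plus lower-order error terms in $C$ and $s$, which the chosen lower bound on $r$ comfortably ensures.
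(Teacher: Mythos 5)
Your argument is correct and follows essentially the same route as the paper's: both proofs use the ball-avoidance hypothesis plus the upper quasi-geodesic inequality to force every near-point of $\gamma$ to project to a parameter far from $0$, then use a connectedness/intermediate-value argument to locate an excursion of $\gamma$ outside the $s$-tube that straddles $\alpha(0)$, and finally apply the lower quasi-geodesic inequality to bound the excursion's width from below. The paper packages the connectedness step as an interval-covering argument (finding $J_q\ni 0$ among the parameter intervals), whereas you package it as a sign change between disjoint closed ``positive'' and ``negative'' regions, but these are the same idea; the only technical loose end in your write-up is that $d(x,\alpha)=s$ need not be realized by a point $\alpha(t_x)$ at distance exactly $s$, which is handled (as in the paper) by working with points at distance $<2s$ and absorbing the slack into the constants.
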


\begin{proof}
We first claim that $\gamma$ does not lie in the $(2s)$--neighborhood of $\alpha$. We assume by the way of contradiction that $\gamma$ lies in the $(2s)$--neighborhood of $\alpha$. Let $\alpha(-r)=x_0,x_1,\cdots,x_n=\alpha(r)$ be points $\gamma$ such that $d(x_{i-1},x_i)<1$ for each $i\in\{1,2,\cdots,n\}$. For each $i\in \{1,2,\cdots,n-1\}$ we let $t_i$ in $(-\infty,\infty)$ such that $d(x_i,\alpha(t_i))<2s$. We also let $t_0=-r$ and $t_n=r$. For each $i\in \{1,2,\cdots,n\}$ we let $I_i$ be the interval in $(-\infty,\infty)$ with endpoints $t_{i-1}$ and $t_i$. Then we observe that $[-r,r]\subset \bigcup I_i$. Therefore, $0\in I_p$ for some $p\in \{1,2,\cdots,n\}$.

We remind the reader that the endpoints of $I_p$ are $t_{p-1}$ and $t_p$. By the choice of $t_{p-1}$ and $t_p$ and the triangle inequality, we have 
$$d(\alpha(t_{p-1}),\alpha(t_p))\leq d(\alpha(t_{p-1}),x_{p-1})+d(x_{p-1},x_p)+d(x_p,\alpha(t_p))<2s+1+2s\leq 4s+1.$$
Since $\alpha$ is an $(L,C)$--quasi-geodesic, we have $|t_p-t_{p-1}|< L(4s+1+C)$. Also, $0$ lies between $t_{p-1}$ and $t_p$. Therefore, $|t_p-0|<L(4s+1+C)$. This implies that
\begin{align*}
  d(x_p,\alpha(0))&\leq d(x_p,\alpha(t_p))+d(\alpha(t_p),\alpha(0))\\&< 2s+L|t_p-0|+C\\&\leq 2s+L^2(4s+1+C)+C\\&\leq (4L^2+2)s+L^2(C+1)+C\\&\leq \frac{r}{2L}+(\frac{r}{2L}-C)\leq \frac{r}{L}-C 
\end{align*}
that contradicts to the fact $\gamma$ lies out side the open ball $B(\alpha(0),r/L -C)$. Therefore, $\gamma$ does not lie in the $(2s)$--neighborhood of $\alpha$.

Now we let $\alpha(-r)=y_0,y_1,\cdots,y_m=\alpha(r)$ be points $\gamma$ such that the following hold:
\begin{enumerate}
    \item $d(y_i,\alpha)=s$ for each $i\in \{1,2,\cdots, m-1\}$;
    \item For each $i\in \{1,2,\cdots, m\}$ either the subpath $\gamma_i$ of $\gamma$ connecting $y_{i-1}$ and $y_i$ lies completely outside the open $s$--neighborhood of $\alpha$ or all the points in $\gamma_i$ excepts its endpoints $y_{i-1}$ and $y_i$ lies inside the open $s$--neighborhood of $\alpha$. 
\end{enumerate}
We let $s_i$ in $(-\infty,\infty)$ such that $d(y_i,\alpha(s_i))<2s$ for $i\in \{1,2,\cdots,m-1\}$. We let $s_0=-r$ and $s_m=r$. For each $i\in \{1,2,\cdots,m\}$ we let $J_i$ be the interval with endpoints $s_{i-1}$ and $s_i$. Then we observe that $[-r,r]\subset \bigcup J_i$. Then $0\in J_q$ for some $q\in \{1,2,\cdots,m\}$.

We remind the reader that the endpoints of $J_p$ are $s_{q-1}$ and $s_q$. If the all points of the subsegment $\gamma_q$ of $\gamma$ excepts $y_{q-1}$ and $y_q$ lie in open $s$--neighborhood of $\alpha$, then we can obtain a contradiction by using a similar argument as above. Therefore, the subsegment $\gamma_q$ of $\gamma$ must lie outside the open $s$--neighborhood of $\alpha$.

We observe that 
\begin{align*}
    d(\alpha(s_q),\alpha(0))&\geq d(y_q,\alpha(0))-d(y_q,\alpha(s_q))\\&\geq (\frac{r}{L}-C)-2s\\&\geq \frac{r}{L}-\frac{r}{4L}-\frac{r}{4L}\geq \frac{r}{2L}.
\end{align*}
Therefore, $$|s_q-0|\geq \frac{r}{2L^2}-\frac{C}{L}\geq \frac{r}{4L^2}.$$
Similarly, $|s_{q-1}-0|\geq r/(4L^2)$. Since $0$ lies between $s_{q-1}$ and $s_q$, we have $|s_q-s_{q-1}|\geq r/(2L^2)$. Therefore,

\begin{align*}
   d(y_p,y_{p-1})&\geq d(\alpha(s_p),\alpha(s_{p-1}))-d(\alpha(s_p), y_p)-d(\alpha(s_{p-1}),y_{p-1})\\&\geq \biggl(\frac{1}{L}|s_q-s_{q-1}|-C\biggr)-2s-2s\\&\geq\frac{r}{2L^3}-C-4s\\&\geq \frac{r}{4L^3}-4s\\&\geq (6D+4)s-4s\geq 6Ds. 
\end{align*}
This implies that $x=y_{q-1}$ and $y=y_q$ are the desired points on $\gamma$.
\end{proof}

We now prove the fact the divergence of a contracting quasi-geodesic is at least quadratic in the following proposition.

\begin{prop}
\label{lem_CS}
Let $\alpha\!: (-\infty,\infty)\to X$ be an $(L,C)$--quasi-geodesic in a geodesic space $X$. Assume that $\alpha$ is also an $(A,D)$--contracting quasi-geodesic. Then the divergence of $\alpha$ is at least quadratic.
\end{prop}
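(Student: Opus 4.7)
The plan is to exploit Lemma~\ref{bbaa} together with the contracting property to show that for all sufficiently large $r$ and every path $\gamma$ outside $B(\alpha(0), r/L-C)$ connecting $\alpha(-r)$ to $\alpha(r)$, one has $\ell(\gamma)\gtrsim s^2$ where $s = r/[(8L^3+4L)(6D+4)]$ is linear in $r$. Since $s$ is linear in $r$, this yields the quadratic lower bound on ${\rm Div}_\alpha(r)$.

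First I would apply Lemma~\ref{bbaa} to obtain points $x,y\in\gamma$ with $d(x,\alpha)=d(y,\alpha)=s$, $d(x,y)\geq 6Ds$, and the subsegment $\gamma_0$ of $\gamma$ from $x$ to $y$ lying outside the open $s$--neighborhood of $\alpha$. It then suffices to prove that $\ell(\gamma_0)$ is at least a positive constant multiple of $s^2$. To this end I would first derive a lower bound on $d(\pi_\alpha(x),\pi_\alpha(y))$. Letting $x_\alpha\in\alpha$ realize (coarsely) the distance $d(x,\alpha)=s$, conditions (1) and (2) of the contracting definition give
$$d(x,\pi_\alpha(x))\leq d(x,x_\alpha)+d(x_\alpha,\pi_\alpha(x_\alpha))+d(\pi_\alpha(x_\alpha),\pi_\alpha(x))\leq s+D+(Ds+D),$$
and similarly for $y$. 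The triangle inequality then yields
$$d(\pi_\alpha(x),\pi_\alpha(y))\geq d(x,y)-2(D+1)s-4D\geq (4D-2)s-4D,$$
which, for $s$ sufficiently large (hence $r$ sufficiently large), is at least $2Ds$.

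Next I would subdivide $\gamma_0$ into consecutive pieces of length at most $As$ by choosing points $x=z_0,z_1,\ldots,z_N=y$ on $\gamma_0$ with $d(z_{i-1},z_i)\leq As$ and $N\leq \lceil \ell(\gamma_0)/(As)\rceil$. For each $i$ every point of the piece from $z_{i-1}$ to $z_i$ lies in the ball of radius $As$ about $z_{i-1}$; since $d(z_{i-1},\alpha)\geq s$ we have $As\leq A\,d(z_{i-1},\alpha)$, so property (3) of contracting gives $\diam(\pi_\alpha(\{z_{i-1},z_i\}))\leq D$. Summing along the partition,
$$d(\pi_\alpha(x),\pi_\alpha(y))\leq \sum_{i=1}^N d(\pi_\alpha(z_{i-1}),\pi_\alpha(z_i))\leq ND.$$
Combining this with the lower bound $d(\pi_\alpha(x),\pi_\alpha(y))\geq 2Ds$ yields $N\geq 2s$, and hence
$$\ell(\gamma_0)\geq (N-1)\cdot \text{(typical piece length)}\,\gtrsim\, As^2,$$
after choosing the subdivision so that most pieces have length comparable to $As$ (e.g.\ parametrize $\gamma_0$ by arc length and insert a subdivision point every $As$). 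Since $s$ is a linear function of $r$, this gives ${\rm Div}_\alpha(r)\geq c\, r^2$ for some $c>0$ and all sufficiently large $r$.

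The main obstacle is the bookkeeping in the last step: one must ensure the subdivision produces genuinely $N\geq 2s$ pieces whose lengths sum to $\ell(\gamma_0)$ while each piece still lies in a ball of radius $\leq A\,d(z_{i-1},\alpha)$ so that the contracting property applies. This is a standard but slightly delicate argument: parametrize $\gamma_0$ by arc length and take $z_i$ to be the point of $\gamma_0$ at arc-length $iAs$ from $x$ (with $z_N=y$); the pieces are then automatically contained in balls of radius $As\leq A\,d(z_{i-1},\alpha)$, and the count $N\geq 2s$ forces $\ell(\gamma_0)\geq (N-1)As\gtrsim As^2$, as required.
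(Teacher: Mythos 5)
Your proposal is correct and takes essentially the same approach as the paper's proof: invoke Lemma~\ref{bbaa}, derive a linear-in-$s$ lower bound on $d\bigl(\pi_\alpha(x),\pi_\alpha(y)\bigr)$, subdivide the $s$-neighborhood-avoidant subsegment into pieces small enough that property~(3) of the contracting map bounds each projection displacement by $D$, and sum to get roughly $s$ pieces of length roughly $As$. Two minor repairs are needed: the claimed inequality $(4D-2)s-4D\geq 2Ds$ fails when $D=1$, so you should aim for a weaker target such as $Ds$ (which does follow for $s$ large since $D\geq 1$ gives slope $4D-2\geq 2 > D$ only when $D<2$; in general simply use that $(4D-2)s - 4D \geq s$ for $s$ large, yielding $N\geq s/D$ and still a quadratic bound); and to invoke property~(3) you should take pieces of arc-length strictly less than $As$ (the paper uses at most $As/2$), since $d(z_{i-1},\alpha)$ may equal $s$ exactly and the contracting condition as stated requires $d(z_{i-1},z_i) < A\,d(z_{i-1},\alpha)$.
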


\begin{proof}
Since $\alpha$ is an $(A,D)$--contracting quasi-geodesic, there exist a map $\pi_\alpha\!:X\to \alpha$ satisfying:
\begin{enumerate}
\item $\pi_\alpha$ is $(D,D)$--coarsely Lipschitz; 
\item For any $y\in \alpha$, $d\bigl(y,\pi_\alpha(y)\bigr)\leq D$;
\item For all $x\in X$, if we set $R=A d(x,\alpha)$, then $\diam\bigl(\pi_\alpha\bigl(B_R(x)\bigr)\bigr)\leq D$.
\end{enumerate}

We first show that for all $x \in X$, \[d(x,\pi_\alpha(x)) \leq 2D d(x,\alpha) + 4D.\]
Let $y \in \alpha$ such that $d(x,y) \leq d(x,\alpha)+1$. Then from the definition of $(A,D)$--contracting we have
\begin{align*}
    d(x,\pi_\alpha(x)) \leq& d(x,y) + d\bigl(y,\pi_\alpha(y)\bigr) + d\bigl(\pi_\alpha(y),\pi_\alpha(x)\bigl)\\
    \leq& d(x,\alpha)+1 + D + D  d(x,y)+ D\\
    \leq& d(x,\alpha)+1 + D + D(d(x,\alpha)+1)+ D\\
    \leq& (D+1)d(x,\alpha) +3D +1\\
    \leq& 2D d(x,\alpha) + 4D.
\end{align*}

We now prove that $$\rm Div_\alpha(r)\geq \biggl(\frac{A}{4(8L^3+4L)^2(6D+4)^2}\biggr)r^2$$
for each $r > 4L^3(C+1)+4LC+8(8L^3+4L)(6D+4)$. Let $s= r/[(8L^3+4L)(6D+4)]$ and let $\gamma$ be a path outside the open ball $B(\alpha(0),r/L -C)$. Then by Lemma~\ref{bbaa} we can find two points $x$ and $y$ in $\gamma$ with the following properties:
\begin{enumerate}
\item $d(x,\alpha)=d(y,\alpha)=s$ and $d(x,y)\geq 6 Ds$;
\item The subsegment $\eta$ of $\gamma$ connecting $x$ and $y$ lies outside the open $s$--neighborhood of $\alpha$.
\end{enumerate}
Therefore, 
\begin{align*}
d\bigl(\pi_\alpha(x),\pi_\alpha(y)\bigr) &\geq d(x,y)-d\bigl(x,\pi_\alpha(x)\bigr)-d\bigl(y,\pi_\alpha(y))\bigr)\\
&\geq 6Ds - (2Dd(x,\alpha)+4D)-(2Dd(y,\alpha)+4D) \\
&\geq 6Ds-2(2Ds+4D)\\&\geq 2Ds-8D\geq Ds.
\end{align*}
Let $R=As$, let $x=x_0,x_1, x_2,\cdots,x_n=y$ be points in $\eta$, and let $\eta_i$ be the subsegment of $\eta$ connecting $x_{i-1}$ and $x_i$ for $i\in \{1,2\cdots,x_n\}$ such that $R/4\leq \ell(\eta_i)\leq R/2$ and $\ell(\eta)=\sum\limits_{i=1}^n \ell(\eta_i)$. This implies \[\ell(\eta)=\sum\limits_{i=1}^n \ell(\eta_i)\geq \frac{nR}{4}.\]

Since $\pi_\alpha$ is an $(A,D)$--contracting map and $d(x_{i-1}, x_i)<Ad(x_{i-1}, \alpha)$, we have $d\bigl(\pi_\alpha(x_{i-1}),\pi_\alpha(x_i)\bigr)\leq D$ for each $1\leq i \leq n$. Thus \[
    d\bigl(\pi_\alpha(x),\pi_\alpha(y)\bigr)\leq \sum\limits_{i=1}^n d\bigl(\pi_\alpha(x_{i-1}),\pi_\alpha(x_i)\bigr)\leq nD.
\]
Since $d\bigl(\pi_\alpha(x),\pi_\alpha(y)\bigr) \geq Ds$, we have $n\geq s$. Therefore, $$\ell(\eta)\geq \frac{nR}{4}\geq \frac{sR}{4}\geq \frac{As^2}{4}\geq \frac{Ar^2}{4(8L^3+4L)^2(6D+4)^2}.$$
This implies that $$\rm Div_\alpha(r)\geq \biggl(\frac{A}{4(8L^3+4L)^2(6D+4)^2}\biggr)r^2$$
for each $r > 4L^3(C+1)+4LC+8(8L^3+4L)(6D+4)$.
Thus, the divergence of $\alpha$ is at least quadratic. 
\end{proof}

\bibliographystyle{alpha}
\bibliography{Tran}
\end{document}